\documentclass[letter,11pt]{article}

\usepackage[linktocpage=true]{hyperref}
\usepackage[margin=1.2 in, top=1 in, bottom= 1.2 in]{geometry}

\usepackage[normalem]{ulem}

\usepackage{lipsum}
\newcommand\blfootnote[1]{%
  \begingroup
  \renewcommand\thefootnote{}\footnote{#1}%
  \addtocounter{footnote}{-1}%
  \endgroup
}

\makeatletter
\g@addto@macro\normalsize{
  \setlength\abovedisplayskip{8pt}
  \setlength\belowdisplayskip{8pt}
  \setlength\abovedisplayshortskip{8pt}
  \setlength\belowdisplayshortskip{8pt}
  }
\makeatother

\usepackage{tocloft}

\usepackage[english]{babel}
\usepackage{amsfonts}
\usepackage{mathrsfs}
\usepackage{bbm}
\usepackage{latexsym}
\usepackage{amssymb}
\usepackage{mathtools}

\usepackage{enumitem}
\setlist{nolistsep}
\usepackage{amsthm}
\usepackage[capitalize]{cleveref}
\crefformat{footnote}{#2\footnotemark[#1]#3} 

\newcommand\eqnitem[1][]{%
  \ifx\relax#1\relax  \item \else \item[#1] \fi
  \abovedisplayskip=0pt\abovedisplayshortskip=0pt~\vspace*{-\baselineskip}}

\newtheoremstyle{plain}{3mm}{3mm}{\slshape}{}{\bfseries}{.}{.5em}{}
\newtheoremstyle{definition}{2mm}{2mm}{}{}{\bfseries}{.}{.5em}{}
\theoremstyle{plain}
	
\newtheorem{theorem}{Theorem}

\newtheorem{lemma}[theorem]{Lemma}

\newtheorem*{clm}{Claim}
\newtheorem{question}[theorem]{Question}
\theoremstyle{definition}
\newtheorem{definition}[theorem]{Definition}

\theoremstyle{plain}
\newcounter{MainTheoremCounter}

\newtheorem{Maintheorem}[MainTheoremCounter]{Theorem}

\theoremstyle{plain}
\newtheorem*{namedthm}{\namedthmname}
\newcounter{namedthm}
	

\usepackage{chngcntr}
\counterwithin{theorem}{section}

\numberwithin{equation}{section}

\allowdisplaybreaks

\usepackage{xcolor}
\definecolor{DarkGreen}{RGB}{1,150,32}
\definecolor{Color2}{rgb}{0.78, 0.11, 0.0}
\hypersetup{citecolor = black,colorlinks,
			linkcolor = black,
			urlcolor = Color2}

\usepackage{titlesec}
\titleformat{\section}
  {\Large\center\bfseries}
  {\thesection.}{.7em}{}
\titlespacing*{\section}{0pt}{3.5ex plus 0ex minus 0ex}{1.5ex plus 0ex}
\titleformat{\subsection}
  {\large\center\bfseries}
  {\thesubsection.}{.7em}{}
\titlespacing*{\subsection}{0pt}{3.5ex plus 0ex minus 0ex}{1.5ex plus 0ex}
\titleformat{\subsubsection}
  {\center\bfseries}
  {\thesubsubsection.}{.7em}{}
\titlespacing*{\subsubsection}{0pt}{3.5ex plus 0ex minus 0ex}{1.5ex plus 0ex}

\addto\captionsenglish{}

\usepackage{titling}
\usepackage{bibentry}
\nobibliography* 

\usepackage{authblk}

\newcommand{\Cech}{\v{C}ech}

\newcommand{\eps}{\epsilon}
\newcommand{\N}{\mathbb{N}}
\newcommand{\Z}{\mathbb{Z}}
\newcommand{\R}{\mathbb{R}}

\newcommand{\T}{\mathbb{T}}

\newcommand{\defeq}{\vcentcolon=}

\newcommand\restr[2]{{ \left.\kern-\nulldelimiterspace #1 \right|_{#2}}}

\usepackage[normalem]{ulem}

\newcommand{\families}{\text{Fam}}
\newcommand{\family}{\mathcal{F}}
\newcommand{\familytwo}{\mathcal{G}}
\newcommand{\familythree}{\mathcal{H}}

\newcommand{\syndetic}{\mathcal{S}}
\newcommand{\thick}{\mathcal{T}}
\newcommand{\PS}{\mathcal{PS}}

\newcommand{\dcsyndetic}{dc\mathcal{S}}
\newcommand{\dcsyndeticbohr}{\dcsyndetic_{\text{Bohr}_0}}
\newcommand{\dcthick}{dc\mathcal{T}}
\newcommand{\dcthickbohr}{\dcthick_{\text{Bohr}_0}}

\newcommand{\central}{\mathcal{C}}

\newcommand{\IP}{\mathcal{IP}}
\newcommand{\SG}{\text{SG}}

\newcommand{\bohrzero}{Bohr$_0$}

\newcommand{\classcap}{\curlyvee}
\newcommand{\classcapdual}{\curlywedge}

\newcommand{\RP}{\mathbf{RP}}
\newcommand{\seq}{S_\text{eq}}
\newcommand{\powerset}{\mathcal{P}}
\newcommand{\Id}{\text{Id}}

\newcommand{\lean}[1]{}
\newcommand{\leanok}{}

\begin{document}

\title{The local dynamical structure of $\Delta^*$ sets via a new Furstenberg family algebra}

\author[1]{Angelina Blahodatna}
\author[1]{Lauren Detmold}
\author[1]{Daniel Glasscock}
\author[2]{Anh N. Le}

\affil[1]{\small Dept. of Mathematics and Statistics, U. of Massachusetts Lowell, Lowell, MA, USA}
\affil[2]{\small Dept. of Mathematics,
	U. of Denver, Denver, CO, USA}

\date{}

\maketitle


\blfootnote{2020 \emph{Mathematics Subject Classification.}  Primary: 05D10. Secondary: 37B05.}

\blfootnote{\emph{Key words and phrases. }
delta sets,
Bohr sets,
dynamics of semigroup actions,
Furstenberg families}

\begin{abstract}
    In this paper, we strengthen the connection between the combinatorics of difference sets and the dynamics of group rotations.
    Our main result shows that sets which have non-empty intersection with all difference subsets of a commutative semigroup possess local Bohr structure.
    This generalizes results of Bergelson, Furstenberg, and Weiss and Host and Kra from the integers to arbitrary commutative semigroups.
    We accomplish this by A) utilizing a recent result showing that the regionally proximal relation is an equivalence relation for minimal actions of commutative semigroups and by B) describing a new, DeMorgan-type algebra on Furstenberg families that allows for efficient manipulation and computation. 
    
    We formally verify all of the results in this paper in Lean.  The main results are verified in a Palomar submission, and we link to a Github repository containing code for the complete verification.
\end{abstract}

\clearpage

\tableofcontents

\section{Introduction}

The results in this paper concern patterns in semigroups that are well-known in the integers but less so in general.  For clarity, we quickly dispense with the necessary definitions; motivation for them will follow later in the introduction.

Let $(S,\cdot)$ be a (discrete) semigroup.  A set $A \subseteq S$ is \emph{syndetic} if there exists a finite set $F \subseteq S$ such that $\cup_{f \in F} f^{-1} A = S$; a \emph{$\Delta$ set} if there exists a sequence $s_1, s_2, \ldots \in S$ such that $s_j \in s_i A$ for all $1 \leq i < j$; and a \emph{\bohrzero{}} set if there exists $d \in \N$, a semigroup homomorphism $\varphi: S \to \R^d / \Z^d$, and an open set $U$ containing $\vec 0$ in $\R^d / \Z^d$ such that $\varphi^{-1} U \subseteq A$.  Dually, a subset of $S$ is a \emph{thick set} / \emph{$\Delta^*$ set} / \emph{set of Bohr recurrence} if it has non-empty intersection with all syndetic / $\Delta$ / \bohrzero{} subsets of $S$, respectively.

\subsection{Bohr and difference sets in the integers}

Let $\vec v \in \R^d / \Z^d$ be a point in the $d$-dimensional torus, and let $\eps > 0$.
The set of integers $n$ for which $n \vec v$ is within $\eps$ of $\vec 0$ is an example of a \bohrzero{} subset of $\Z$.
Describing patterns in these \emph{almost periodic} sets has long been important to number theorists and dynamicists.
\bohrzero{} sets -- together with their higher order analogues, \emph{nil-Bohr sets} -- are the essential ordered objects in the theory that describes the structure of arbitrary dynamical systems.

One such pattern that is arises frequently in Ramsey theory and dynamics is described by $\Delta$ sets.  In the integers, these are \emph{infinite difference sets}: sets of the form $(A - A) \cap \N$ where $A \subseteq \Z$ is infinite.
The eponymous Ramsey's Theorem is equivalently formulated to say that one piece of any finite partition of an infinite difference set contains an infinite difference set.
In dynamics, difference sets capture times of single recurrence: note that $n \in A - A$ if and only if $A \cap (A - n) \neq \emptyset$, marking a time $n$ at which the set $A$ ``recurs'' by shifting to intersect itself non-trivially.
Sets which have non-empty intersection with all $\Delta$ sets, that is, $\Delta^*$ sets, are combinatorially large and rich: they are syndetic and contain many additive patterns.

It is easy to see that a \bohrzero{} set is a $\Delta^*$ set. Indeed, consider the \bohrzero{} set $B$ described in the first paragraph above, and fix a sequence of integers $n_1, n_2, \ldots$. We must show that some $n_j - n_i$ belongs to $B$.  The points $n_1 \vec v$, $n_2 \vec v$, $n_3 \vec v$, \dots{} live in the totally bounded space $\R^d / \Z^d$, so some $n_i \vec v$ and $n_j \vec v$ are within $\eps$ of each other.  Subtracting, the point $(n_j - n_i) \vec v$ is within $\eps$ of $\vec 0$, and hence $n_j - n_i \in B$, as was to be shown.
There are other well-known connections between \bohrzero{} sets and $\Delta^*$ sets in the integers, most notably the characterization of minimal equicontinuous systems as precisely those systems in which every point returns to every neighborhood of itself along a $\Delta^*$ set; see \cref{sec_dynamics} for the terminology and \cite[Section 0]{bergelson_leibman_2018} for more details.

Conversely, not every $\Delta^*$ set is a \bohrzero{} set: the set $\Z \setminus \{n^2 \mid n \in \N \}$ is $\Delta^*$ (the set of squares do not contain any difference infinitely often) but is not a \bohrzero{} set (the classic polynomial equidistribution theorem of Weyl implies that \bohrzero{} sets contain perfect squares).
Nevertheless, it is the case that $\Delta^*$ have ``local'' \bohrzero{} structure.  Bergelson, Furstenberg, and Weiss \cite{bergelson_furstenberg_weiss_2006} made this precise for the related family of $\Delta_0^*$ sets (see \cref{sec_diff_sets_and_bz_sets} for the definition); Host and Kra \cite{host_kra_2011} improved their result to address $\Delta^*$ sets and their higher-order analogues.  The following theorem is a consequence of their main result.

\begin{theorem}[{cf. \cite{host_kra_2011}}]
\label{thm_bfw_pw_bohr}
    Let $A \subseteq \Z$.  If $A$ is a $\Delta^*$ set, then for all thick sets $H \subseteq \Z$, there exists a thick set $H' \subseteq H$ and a \bohrzero{} set $B \subseteq \Z$ such that $A \cap H' = B \cap H'$.
\end{theorem}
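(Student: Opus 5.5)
We would prove \cref{thm_bfw_pw_bohr} dynamically, passing through a Furstenberg correspondence for $\Delta^*$ sets and then the regionally proximal relation. First, reformulate the hypothesis. Since $A$ is $\Delta^*$, its complement $A^c$ contains no $\Delta$ set. Let $\Omega = \{0,1\}^{\Z}$ with the shift $T$, and let $x = 1_{A^c}$. For a thick set $H$, pick a minimal subsystem $(X,T)$ inside the orbit closure of $x$ along $H$. Concretely, choose intervals $I_k \subseteq H$ with lengths tending to infinity, and take a limit point $y = \lim T^{c_k} x$ with $c_k$ the centers of $I_k$. Then pass to a minimal subset $X$ of $\overline{\{T^n y\}}$, using a uniformly recurrent point $z \in X$.

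The basic observation is that $z$ lies in a minimal system, and $R = \{n : z(n)=1\}$ is locally a translate of $A^c$ inside $H$. Moreover, $R$ contains no $\Delta$ set, since containing an infinite difference set is a ``finitary-in-the-limit'' property: by Ramsey/compactness, arbitrarily long finite $\Delta$-patterns in $R$ pull back to arbitrarily long ones in $A^c$. Here we would need to be careful. Containing arbitrarily long finite difference patterns does not give an infinite one. So instead we would keep the relevant shifts inside $H$ and argue directly with return times, or replace ``$\Delta$'' by its local version. This transfer is one technical point.

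The key step is to show that the characteristic function $1_{[1]}$, evaluated along the orbit of $z$, factors through the maximal equicontinuous factor $\pi : X \to K$. Here $K$ is a compact abelian group rotation, with $\Z$ acting via some character. We use that the regionally proximal relation $\RP$ is an equivalence relation on minimal systems; this is the classical Ellis–Auslander fact for $\Z$, and the paper's cited result for commutative semigroups. Its classes are exactly the fibers of $\pi$.

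Suppose $z' \in \RP[z]$ but $z'(0) \neq z(0)$. Regional proximality produces points $z_i \to z$, $z_i' \to z'$ and times $n_i$ with $d(T^{n_i}z_i, T^{n_i}z_i') \to 0$. Iterating this with a diagonal argument, in the style of Host–Kra, builds a sequence $m_1 < m_2 < \dots$ with all differences $m_j - m_i$ landing in $R$, which is a $\Delta$ set inside $A^c$ and contradicts the hypothesis. Hence $1_{[1]}$ is constant on $\RP$-classes near $z$, and is therefore of the form $1_U \circ \pi$ for a clopen-ish $U$ containing $\pi(z)$ when $z(0)=0$, since $0 \in A$ shifts appropriately.

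Pulling back, $\{n : \pi(T^n z) \in U\}$ is a Bohr set. Restricting to the set $H'$ of times $n$ where $T^{n} x$ agrees with $T^{n}z$ on a large window, which is a thick subset of $H$, gives $A \cap H' = B \cap H'$. Bohr sets are approximated by \bohrzero{} sets via the finite-dimensional torus quotients of $K$.

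We expect the main obstacle to be the middle step. It requires upgrading ``$z$ and $z'$ are regionally proximal with differing coordinate'' into a genuine infinite $\Delta$ set inside $A^c$, and doing so while staying inside the prescribed thick set $H$. We would first try the Host–Kra approach: use that $\RP$ is closed and invariant to inductively choose $m_{k+1}$ so that $T^{m_{k+1}-m_i}$ moves a neighborhood of $z$ into $[1]$ for all $i \le k$. Compactness then lets each step be realized in $x$ within an interval of $H$.
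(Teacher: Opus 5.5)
Your proposal has a genuine gap at the first step, and the problem is not the finite-versus-infinite issue you flag.

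\textbf{The choice of $z$.} The point $z$ you construct (a uniformly recurrent point of a minimal subset of the orbit closure of $y=\lim T^{c_k}x$) is tied to $x$ only up to translation. Your last step, however, needs $z(n)=x(n)$ for all $n$ in a thick $H'\subseteq H$, that is, $z$ proximal to $x$ along $H$. Nothing in the construction provides this.

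Moreover, containing a $\Delta$ set is not a translation-invariant property. So ``$R$ contains no $\Delta$ set'' does not follow from the hypothesis, and it is false in general. For example, $A=2\Z$ is $\Delta^*$, since among $s_1,s_2,s_3$ two have the same parity. Yet with $H=\Z$ and odd centers $c_k$, your construction may output $z=1_{2\Z}$. Then $R=2\Z$ is a $\Delta$ set, and $z$ disagrees with $x=1_{2\Z+1}$ at every coordinate.

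The fix is the Auslander--Ellis-type Lemma~\ref{lem:auslander-ellis}. Take a minimal idempotent $p\in\overline{H}$ with $\bigcap_{f\in F}(H-f)\in p$ for all finite $F$ (Lemma~\ref{lemma_thick_closure_contains_strong_ultrafilter}), and set $z=px$. Then $z$ is uniformly recurrent and agrees with $x$ on a thick $H'\subseteq H$. With this $z$ you never need to transfer ``no $\Delta$ set'' to $R$. Instead, the contradiction must come from $R\cap H'=A^c\cap H'$ itself being a $\Delta$ set. So you need $\Delta$ sets inside $R(z,[1])\cap H''$ for \emph{every} thick $H''$, which is Theorem~\ref{thm_delta_from_kronecker}.

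\textbf{The middle step.} Your sketch does not reach this either. The definition of $\RP$ only says that points near $z$ and near $z'$ come together. Iterating it therefore yields times in $R(V',[1])$ for neighbourhoods $V'$ of $z$, not times in $R(z,[1])$, and it gives no control over a prescribed thick set.

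The missing ingredient is the Veech-type strengthening in Theorem~\ref{thm_delta_set_builder_prestep}: if $z'\in V$ and $(z,z')\in\RP$, then $R(z,W)\cap R(V,W)$ is syndetic for every non-empty open $W$. The induction then runs as follows. Suppose $s_1,\dots,s_N$ are chosen, with $\sigma=s_1+\dots+s_N$ and $W=\bigcap_{i\le N}T^{-(\sigma-s_i)}[1]\neq\emptyset$. Pick
$t\in R(z,W)\cap R([1],W)\cap\bigcap_{i\le N}\bigl(H''-(\sigma-s_i)\bigr)$, which is possible because a syndetic set meets a thick set. Set $s_{N+1}=\sigma+t$. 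The condition $t\in R([1],W)$ keeps the next $W$ non-empty.

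\textbf{The endgame.} This should be one-sided and local.
\begin{itemize}
\item Only $z'\in[1]$ yields a contradiction. A point of $[0]$ in $\RP[z]$ would only produce a $\Delta$ set inside $A\cap H'$, which is harmless.
\item If $z(0)=1$, take $z'=z$.
\item Hence $\RP[z]=\pi^{-1}\pi z\subseteq[0]$. Since $\pi([1]\cap X)$ is closed, there is an open $U\ni\pi z$ with $\pi^{-1}U\subseteq[0]$.
\item So $B=\{n: z(n)=0\}$ contains $\{n:\pi(T^nz)\in U\}$, and is therefore \bohrzero{} by Theorem~\ref{thm_equiv_char_of_bohr_zero_sets}.
\end{itemize}
No global factorization $1_{[1]}=1_U\circ\pi$ is needed, and it is not true in general.

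Repaired in this way, your direct argument is essentially the contrapositive of the paper's proof via Theorem~\ref{mainthm_gen_of_bfw_reformulated}. The closedness of $\pi([1]\cap X)$ plays the role of the concentration Lemma~\ref{lemma_pr_family_concentration}.
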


To put a simpler version of the result loosely: \emph{every $\Delta^*$ set is equal to a \bohrzero{} set on some thick set}.
For our example above, the set $\Z \setminus \{n^2 \mid n \in \N \}$ is equal to $\Z$, a \bohrzero{} set, on itself, a thick set.  If passing to a thick set is akin to looking locally, the theorem says more: \emph{every localization of every $\Delta^*$ set is locally \bohrzero{}}.

\subsection{Local dynamical structure of $\Delta^*$ sets in semigroups}

Our main result is a generalization of \cref{thm_bfw_pw_bohr} to commutative semigroups.

\begin{Maintheorem}
\label{mainthm_gen_of_bfw}
    Let $S$ be a commutative semigroup.  If $A \subseteq S$ is a $\Delta^*$ set, then for all thick sets $H \subseteq S$, there exist a thick set $H' \subseteq H$ and a \bohrzero{} set $B \subseteq S$ such that $A \cap H' = B \cap H'$.
\end{Maintheorem}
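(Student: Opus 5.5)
We will prove the result dynamically, following the strategy behind \cref{thm_bfw_pw_bohr}, with the integer-specific ingredients replaced by semigroup analogues. The plan has four steps.

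\textbf{Step 1: encode $A$ and $H$ as a pointed system.} Consider the compact space $X = \{0,1\}^S$ with the action of $S$ by shifts, $(s x)(t) = x(ts)$. Let $x = 1_A$, and let $\mathcal{O}$ denote the orbit closure of $x$. Fix a minimal subsystem $Y \subseteq \mathcal{O}$. Thickness of $H$ (standard, via Stone--\Cech{} compactification or minimal idempotents) lets us choose $Y$ and a point $y \in Y$ that is a limit of $h x$ along $h \in H$. Moreover, the set of times $h$ with $hx$ near $y$ is "thick inside $H$". The intended thick set $H'$ is a thick subset of $H$ on which the shifts of $x$ track the shifts of $y$. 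More precisely, we take a finite-window agreement set: for each finite $F \subseteq S$, consider the set of $h \in H$ with $(hx)|_F = y|_F$. Thickness of $H$ together with a diagonal argument over an exhausting sequence of finite sets should produce a thick $H' \subseteq H$ with $A \cap H' = \{s : y(\cdot) \text{ pattern}\} \cap H'$, after translating appropriately. This is the step where commutativity and the definition of thick as $\{s\}$-translate-rich get used.

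\textbf{Step 2: $\Delta^*$ passes to minimal limits.} We show that $y$ is "$\Delta^*$-recurrent" in the sense that $y(s)=1$ for all $s$ returning $y$ near itself along a $\Delta$-pattern. Concretely, if $A$ is $\Delta^*$, then $1_A$ is identically $1$ on every $\Delta$ set, up to shifting. The key point is that the statement "$A$ meets every $\Delta$ set" is preserved under limits, because $\Delta$ sets are defined by finitely checkable conditions along sequences. Hence the support of $y$ is $\Delta^*$ in an appropriate relative sense.

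\textbf{Step 3 (main step): from $\Delta^*$ recurrence to the equicontinuous factor.} Let $\pi: Y \to Z$ be the maximal equicontinuous factor, which is a minimal rotation on a compact abelian group. This is where we invoke the cited recent result that the regionally proximal relation $\RP$ is an equivalence relation for minimal actions of commutative semigroups, so that $Z = Y/\RP$ is equicontinuous. We then need to show that the clopen set $U = \{z \in Y : z(e)=1\}$, or its analogue for the identity-free case, is a union of $\RP$-classes near $y$. That is, if $(y, y')$ is regionally proximal, then $y'$ and $y$ agree at the relevant coordinate. The argument runs as follows. Regional proximality produces points $y_i \to y$, $y'_i \to y'$, and times $s_i$ with $s_i y_i, s_i y'_i$ close. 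Iterating along a Ramsey-type construction builds a $\Delta$ set of times on which the disagreement would be witnessed, contradicting Step 2. This is the main obstacle. In the integers, it is handled by Host--Kra using nilsystem machinery, but for $\Bohr_0$ only the first-order step is needed. The Furstenberg family algebra (the duality between $\Delta$, thick, and syndetic families) is the tool for organizing the quantifiers here.

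\textbf{Step 4: conclude.} Once $U$ is $\RP$-saturated, we get $U = \pi^{-1}(V)$ with $V$ open in the group $Z$ containing $\pi(y)$. Approximating $Z$ by finite-dimensional tori via characters, we pass to a sub-neighborhood of the form $\varphi^{-1}(\text{neighborhood of } \vec 0)$ with $\varphi: S \to \R^d/\Z^d$. Pulling back, the return-time set of $y$ to $U$ contains, and on the tracking set coincides with, a $\Bohr_0$ set $B$. Combining this with Step 1 gives $A \cap H' = B \cap H'$.
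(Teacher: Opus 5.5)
Your architecture is sound. It is essentially the primal form of the paper's dual argument: replace $1_A$ by a uniformly recurrent $y$ agreeing with it on a thick $H' \subseteq H$, show that the whole $\RP$-class of $y$ lies in the clopen set $U$, then use $\RP = \seq$ to get a \bohrzero{} set inside $R(y,U)$.

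The gap is Step 3. It carries all of the content, you leave it as ``the main obstacle'', and the ingredients you name do not suffice. The definition of $\RP$ only says that \emph{perturbed} points near $y$ and near $y'$ are brought close together at some time. To build $s_1, s_2, \ldots$ with $s_j \in s_i (R(y,V) \cap H')$ in a semigroup, you must take $s_{N+1} = s_1 \cdots s_N t$ where $t$ satisfies three conditions:
\begin{itemize}
\item $ty \in W_N \defeq \bigcap_{i \le N} (s_1 \cdots \widehat{s_i} \cdots s_N)^{-1} V$, an open set that generally contains neither $y$ nor $y'$;
\item $t \in G_N \defeq \bigcap_{i \le N} (s_1 \cdots \widehat{s_i} \cdots s_N)^{-1} H'$, a thick set;
\item $tV \cap W_N \neq \emptyset$, so that $W_{N+1} \neq \emptyset$ persists. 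This uses that each $s \in S$ acts surjectively, by minimality and commutativity.
\end{itemize}
So you must control the orbit of the base point $y$ itself, inside an arbitrary non-empty open set, at syndetically many times (syndetic, so as to meet $G_N$). That is exactly the Veech-type input the paper imports from the companion paper, \cref{thm_delta_set_builder_prestep}: if $(y,y') \in \RP$, then $R(y,W) \cap R(V,W)$ is syndetic for all non-empty open $W$ and all open $V \ni y'$. This is fed into the induction of \cref{thm_delta_from_kronecker}. It is a separate input beyond ``$\RP$ is an equivalence relation'', and nothing in your sketch produces it.

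Step 2 is false as stated.
\begin{itemize}
\item $2\Z$ is $\Delta^*$ in $\Z$ yet contains no translate of the $\Delta$ set $\N$.
\item $\Delta$ is not a finitary condition; that is $\Delta_0$.
\item $\Delta^*$ is not preserved under limits of translates: $2\Z+1$ is a translate of $2\Z$ and misses the $\Delta$ set $2\Z$.
\end{itemize}
You do not need Step 2, but then the contradiction must be produced inside $H'$. If $(y,y') \in \RP$ with $y' \notin U$, apply the construction above with $V = Y \setminus U$. This shows $R(y, Y \setminus U) \cap H' = (S \setminus A) \cap H'$ is a $\Delta$ set disjoint from $A$, a contradiction. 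This is why the thick set must be threaded through the construction, and why syndeticity rather than mere non-emptiness is needed.

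Step 1 likewise needs more than ``a limit of $hx$ along $H$''. With $A = 2\Z$ and $H = \N$, the point $1_{2\Z+1}$ is such a limit in a minimal subsystem but disagrees with $1_A$ everywhere. You need $y = p1_A$ (or $p1_{A \cap H}$) for a minimal idempotent $p$ containing every $\bigcap_{f \in F} f^{-1}H$. Idempotence then makes $1_A$ and $y$ agree on a thick subset of $H$, as in \cref{lem:auslander-ellis}; no commutativity is needed there.

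Finally, Step 4 uses that a minimal equicontinuous action of a commutative semigroup is a rotation of a compact abelian group. This is not automatic for semigroups and needs the argument of \cref{thm_compact_abelian_grp_in_cxx,thm_equiv_char_of_bohr_zero_sets}.
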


To extend \cref{thm_bfw_pw_bohr} to commutative semigroups, we take an entirely different approach to the ergodic-theoretic ones taken by Bergelson, Furstenberg, and Weiss \cite{bergelson_furstenberg_weiss_2006} and Host and Kra \cite{host_kra_2011}.
Our proof relies on the combinatorics of the regionally proximal relation (cf. \cref{thm_delta_set_builder_prestep} and \cref{thm_delta_from_kronecker}), eschewing ergodic theory in favor of a purely topological-dynamical path.
The first step is to reformulate \cref{mainthm_gen_of_bfw}.  Instead of working with $\Delta^*$ and \bohrzero{} sets, we found it best to work with their duals: $\Delta$ sets and sets of Bohr recurrence.

\begin{Maintheorem}
\label{mainthm_gen_of_bfw_reformulated}
    Let $S$ be a commutative semigroup, and let $A \subseteq S$.  If there exists a thick set $H \subseteq S$ such that for all thick sets $H' \subseteq H$, the set $A \cap H'$ is a set of Bohr recurrence, then the set $A$ is a $\Delta$ set.
\end{Maintheorem}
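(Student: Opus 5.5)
We prove \cref{mainthm_gen_of_bfw_reformulated} by topological dynamics, passing through a minimal system and its maximal equicontinuous factor. The natural dynamical characterization of sets of Bohr recurrence for a commutative semigroup $S$ is: $R$ is a set of Bohr recurrence if and only if for every minimal equicontinuous (group rotation) system $(Z,S)$, every $z \in Z$ and every neighborhood $V$ of $z$, some $r \in R$ has $rz \in V$. Dually, we want to manufacture a $\Delta$ set from recurrence in the Kronecker factor. This is the role of the earlier results \cref{thm_delta_set_builder_prestep} and \cref{thm_delta_from_kronecker}. These say, roughly, that if $(X,S)$ is minimal, $x \in X$, $U$ is a neighborhood of $x$, and the return set to $U$ is large enough to see recurrence in the maximal equicontinuous factor, then the return set contains a $\Delta$ set. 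The mechanism is that the regionally proximal relation is an equivalence relation, so that the maximal equicontinuous factor is $X/\RP$.

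\textbf{Step 1: encode $A$ and $H$ dynamically.} Take the orbit closure $X$ of $1_A$ in $\{0,1\}^S$ (adjoining an identity to $S$ if needed). A thick set $H$ corresponds to a minimal subsystem. More precisely, thickness of $H$ gives an idempotent (or minimal left ideal element) $p$ in $\beta S$ with $H \in p$. Thick subsets $H' \subseteq H$ are exactly the sets containing $H \cap tL$-type shifts along a minimal ideal. We pass to a point $y = p\text{-}\lim s\,1_A$ in a minimal subsystem $Y \subseteq X$. The hypothesis, that $A\cap H'$ is a set of Bohr recurrence for all thick $H'\subseteq H$, should translate into the following statement: the set $N(y,[1]) = \{s : (sy)(e)=1\}$, or a translate of it, is a set of Bohr recurrence for every such localization. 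Concretely, we shall show that the return-time set of $y$ to the cylinder $U=\{z: z(e)=1\}$ meets every Bohr$_0$ set. If some Bohr$_0$ set $B$ avoided it, then $A$ would avoid $B$ on a thick subset of $H$, namely the set of $s\in H$ along which $s1_A$ approximates $y$ on a large finite window. This contradicts the hypothesis.

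\textbf{Step 2: apply the Kronecker criterion.} Let $\pi: Y \to Z = Y/\RP$ be the maximal equicontinuous factor, which is a minimal group rotation because $\RP$ is an equivalence relation for commutative semigroup actions. Bohr recurrence of the return set means that $y$ returns to $U$ at times $s$ whose image $s\pi(y)$ approaches $\pi(y)$ arbitrarily closely. By \cref{thm_delta_from_kronecker}, the return set $N(y,U)$ then contains a $\Delta$ set. The underlying construction (\cref{thm_delta_set_builder_prestep}) iteratively chooses $s_1,s_2,\dots$ with $s_j y$ regionally proximal-close to $s_i y$, and uses the equivalence-relation property to keep $s_j \in s_i N(y,U)$.

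\textbf{Step 3: transfer back to $A$.} A $\Delta$ set $\{s_j\}$ inside $N(y,U)$ involves, at each stage, only finitely many coordinates. Since $y$ is a limit of shifts $t1_A$, we can realize finite configurations $s_j \in s_iA'$ in $A$ itself. This requires a diagonal argument: choose the $s_j$ and approximating shifts simultaneously so that the resulting infinite $\Delta$ sequence lies in a translate $tA$, and then absorb the translate using commutativity, since $s_j \in s_i tA$ reindexes to a $\Delta$ set in $A$ with $s_i' = s_i t$.

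The \textbf{main obstacle} is Step 3 together with Step 1. Passing to a minimal subsystem loses exactness, because a $\Delta$ set in the return times of the limit point $y$ need not come from a single translate of $A$. I would handle this by running the $\Delta$-set construction of \cref{thm_delta_set_builder_prestep} directly on the orbit of $1_A$ restricted to thick pieces of $H$. Each new element would be chosen inside a thick $H'\subseteq H$ on which $1_A$ and the minimal-point behaviour agree on the relevant finite window, so the hypothesis on all such $H'$ is used at every inductive step.
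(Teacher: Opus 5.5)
Your Step 2 is fine once you make explicit the point it needs. Since the cylinder $U$ is clopen, a compactness argument produces $y'\in U$ with $\pi y'=\pi y$ in $Y/\seq$. Then $(y,y')\in\seq=\RP$ by \cref{mainthm_RP_is_eq_relation}, and \cref{thm_delta_from_kronecker} applies. This is a legitimate alternative to the paper's use of partition regularity of $\dcthickbohr$ and \cref{lemma_pr_family_concentration}.

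The genuine gap is the link between $A$ and $y$ in Steps 1 and 3.
\begin{itemize}
\item In Step 1, the set of $s\in H$ at which $s1_A$ agrees with $y$ on a finite window is in general only a member of $p$ (hence central), not a thick set. Moreover, the argument conflates the times $s$ at which the orbit of $1_A$ passes near $y$ with the coordinates $g$ at which $y(g)=1$. Knowing $1_A(fs)=y(f)$ for $f\in F$ says nothing about whether $A$ meets a given Bohr$_0$ set there.
\item In Step 3, absorbing the translate is false. From $s_j\in s_itA$ and $s_i'=s_it$ you get $s_j'\in s_i'tA$, not $s_j'\in s_i'A$. In general $tA\in\Delta$ does not imply $A\in\Delta$: in $(\Z,+)$ the even integers form a $\Delta$ set, but their translate, the odd integers, does not, since two odd consecutive differences sum to an even one.
\item What finite windows actually give you is $s_js\in s_iA$, with the shift on one side only, which is not a $\Delta$ configuration. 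Your closing suggestion to rerun the construction on thick pieces where $1_A$ and $y$ agree on finite windows is only a sketch, and it uses the wrong kind of agreement.
\end{itemize}

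The missing idea is that you can make $A$ agree \emph{exactly} with a uniformly recurrent set on a thick subset of $H$ (\cref{lem:auslander-ellis}). After that, both problematic steps are one-liners.
\begin{itemize}
\item Take a minimal idempotent $p$ with $\bigcap_{f\in F}f^{-1}H\in p$ for every finite $F\subseteq S$ (\cref{lemma_thick_closure_contains_strong_ultrafilter}). Minimality gives uniform recurrence, idempotence gives the agreement below, and $H\in p$ alone does not suffice.
\item Put $1_B=p1_{A\cap H}$. This agrees with your $y$, since $p1_A=p1_{A\cap H}$ for such $p$.
\item By idempotence, $p(1_B,1_{A\cap H})=(1_B,1_B)$. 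So for each finite $F$, the set of $g$ for which $g1_B$ and $g1_{A\cap H}$ agree on $F$ belongs to $p$.
\item Intersecting that set with $\bigcap_{f\in F}f^{-1}H$ gives $g$ with $Fg\subseteq H'\defeq\{g\in H:1_B(g)=1_{A\cap H}(g)\}$. Hence $H'\subseteq H$ is thick and $A\cap H'=B\cap H'$.
\end{itemize}
Now $B\supseteq A\cap H'$, which is a set of Bohr recurrence by hypothesis; this is your Step 1. Also, \cref{thm_delta_from_kronecker} gives $R(x,V)\cap H''\in\Delta$ for \emph{every} thick $H''$. Taking $H''=H'$ yields $A\supseteq A\cap H'=B\cap H'\in\Delta$, which is your Step 3. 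This is exactly why the thick set is built into \cref{thm_delta_from_kronecker}: no finite-window transfer or diagonalization is needed.
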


The intrepid reader will pause to show that Theorems \ref{mainthm_gen_of_bfw} and \ref{mainthm_gen_of_bfw_reformulated} are equivalent by means of elementary combinatorial manipulations and the definitions.
All others can rest assured: one of the main contributions of this paper is a description of a new algebra on Furstenberg families that makes this equivalence, and others like it in Ramsey Theory, readily apparent.  We describe that algebra below in \cref{sec_intro_a_new_algebra}.

To prove \cref{mainthm_gen_of_bfw_reformulated}, we start by showing that it suffices to consider sets $A$ of dynamical origin, that is, sets of visit times of a point to a clopen set in a minimal topological dynamical system (see \cref{thm_lspwf_upgrade}).
We then rely crucially on the main result of \cite{blahodatna_detmold_glasscock_le_RP_2026} -- a companion paper in which we show that the regionally proximal relation for minimal actions of commutative semigroups is an equivalence relation -- to find $\Delta$ sets in those sets of dynamical origin (see \cref{thm_delta_set_builder_prestep,thm_delta_from_kronecker}).
Precedent comes from work of Huang, Shao, and Ye \cite[Theorem E]{huang_shao_ye_2016}, who connect the combinatorics of visit time sets with the regionally proximal relation and its higher-order relatives.

Our method is flexible and can be used to demonstrate other kinds of local structure.  
We show, for example, that in countable, commutative semigroups, sets which have non-empty intersection with all central sets, that is, $\central^*$ sets, have local $\IP^*$ structure.  The precise definitions of these families can be found in \cref{sec_char_of_central_star,sec_strong_ip_is_strong_central}; what is important to know for the following theorem is that in most semigroups, not every $\central^*$ set is an $\IP^*$ set. The countability assumption comes from \cref{lemma_thick_conclusion}; we do not know if it is necessary.

\begin{Maintheorem}
\label{mainthm_local_ipstar_in_cstar}
    Let $S$ be a countable, commutative semigroup.  If $A \subseteq S$ is a $\central^*$ set, then for all thick sets $H \subseteq S$, there exists an $\IP^*$ set $B \subseteq S$ and a thick set $H' \subseteq H$ such that $A \cap H' = B \cap H'$.
\end{Maintheorem}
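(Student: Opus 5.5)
We plan to mirror the route sketched for \cref{mainthm_gen_of_bfw}: dualize first, then work dynamically. Central sets are the members of minimal idempotent ultrafilters in $\beta S$, and $\IP$ sets are the members of arbitrary idempotents. By duality, $\central^*$ sets are those lying in every minimal idempotent, and $\IP^*$ sets are those lying in every idempotent. Thick sets are exactly those whose closure in $\beta S$ contains a minimal left ideal.

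Taking complements, the statement becomes the following. Let $A \subseteq S$, and suppose there is a thick $H$ such that $A \cap H'$ is an $\IP$ set for every thick $H' \subseteq H$. Then $A$ is central. The equivalence is the same De Morgan-type manipulation that converts \cref{mainthm_gen_of_bfw} into \cref{mainthm_gen_of_bfw_reformulated}. Suppose the original statement fails for $A$ and $H$. Then for every $\IP^*$ set $B$ and every thick $H' \subseteq H$, we have $A \cap H' \neq B \cap H'$. Unwinding this with $B$ ranging over the $\IP^*$ sets produces the hypothesis above for $S\setminus A$. We will write this step out carefully, since the quantifier structure (a "for all thick subsets" inside an "exists") is exactly what the family algebra is designed to handle.

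To prove the reformulation, we work in $\beta S$. Since $H$ is thick, $\overline{H}$ contains a minimal left ideal $L$. The key claim is that $\overline{A} \cap L$ contains an idempotent. An idempotent lying in a minimal left ideal is a minimal idempotent, so $A$ is then central.

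To get that idempotent, we first show that $\overline{A}\cap L$ is nonempty. If it were empty, compactness of $L$ would give a clopen neighborhood, that is a set $C \subseteq S$ with $L \subseteq \overline{C}$ and $C \cap A = \emptyset$. The set $H' = H \cap C$ is then thick and satisfies $A \cap H' = \emptyset$, which is not $\IP$, a contradiction.

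We need more than nonemptiness, namely a closed subsemigroup inside $\overline{A}$. The natural candidate is
\[
T=\bigcap\{\overline{A\cap H'} : H' \subseteq H \text{ thick},\ \overline{H'} \supseteq L\}.
\]
Each set $A\cap H'$ in this intersection is $\IP$, so its closure contains an idempotent. The closures are directed downward under finite intersections, because if two thick sets both contain $L$ in their closures then so does their intersection. By compactness, $T \cap L$ is a nonempty closed subset of $L$, and it contains limits of idempotents.

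The main obstacle is that limits of idempotents need not be idempotent, so we cannot directly extract an idempotent in $T\cap L$. To get around this, we would use countability of $S$ via \cref{lemma_thick_conclusion}, which is presumably the lemma that makes this step work. The idea is to diagonalize over a countable decreasing sequence of thick sets $H_n$ with $\bigcap_n \overline{H_n} \cap L$ suitably small, for instance a single minimal left ideal orbit.

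At each stage $n$ we build IP sequences in $A\cap H_n$ and splice them into one IP structure whose tail sums lie in $A\cap H_n$ for every $n$. The closed semigroup of ultrafilters generated by these tails, in the sense of Hindman's theorem, then lies inside $\overline{A}\cap\bigcap_n\overline{H_n}$. By the Ellis–Numakura lemma it contains an idempotent, and that idempotent is minimal because it sits in $L$. Carrying out this splicing so that the tails stay inside the shrinking thick sets is the step we expect to be hardest.
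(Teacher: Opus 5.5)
Your dualization to ``if $A\cap H'\in\IP$ for every thick $H'\subseteq H$, then $A\in\central$'' matches the paper's reformulation, and your argument that $\overline{A}\cap L\neq\emptyset$ is fine. But the proposal stops exactly where the real content begins, and the mechanism you sketch for the missing step cannot work as described.

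First, building one sequence whose tails have all finite products in $A\cap H_n$ for every $n$ amounts to finding a \emph{single} idempotent containing every $A\cap H_n$. The hypothesis only gives a separate idempotent for each $n$, so this is the same ``limit of idempotents'' problem you identified, not a way around it.

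Second, even granting the splicing, the Ellis--Numakura idempotent of the tail semigroup has no reason to be minimal. That semigroup is itself a countable intersection of clopen sets. In $\beta\N$, once such an intersection contains a nonprincipal ultrafilter, it contains all nonprincipal ultrafilters on some lacunary, hence non-piecewise-syndetic, set, and these lie outside $K(\beta\N)$. So the semigroup is never inside $L$, and ``minimal because it sits in $L$'' is unjustified. You would instead need the semigroup to meet $K(\beta S)$, which your hypothesis does not obviously provide.

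Third, your key claim is made for an arbitrary minimal left ideal $L\subseteq\overline{H}$. With $H=S$ it says that every set which is $\IP$ along all thick sets, in particular every strongly central set, lies in an idempotent of every minimal, hence every, left ideal of $\beta S$. That is the direction of the Bergelson--Hindman--Strauss question recalled in \cref{sec_ps_and_strongps_families} on which the paper reports no progress. Centrality only needs an idempotent in \emph{some} minimal left ideal, and your plan gives you no freedom in choosing it.

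The missing idea is to let the dynamics, rather than the idempotent, supply minimality. The paper first reduces, via \cref{lem:auslander-ellis} and \cref{thm_lspwf_upgrade}, to UR sets. By \cref{lemma_unif_rec_implies_UR} these have the form $A=R(x,U)$ with $X$ minimal and $U$ clopen. For thick $H'\subseteq H$, take $G\subseteq H'$ from \cref{lemma_thick_conclusion} and \emph{any} idempotent $p$ containing the $\IP$ set $A\cap G$.
\begin{itemize}
\item Because $U$ is clopen, $px\in U$. The point $px$ is proximal to $x$ and, since $X$ is minimal, automatically uniformly recurrent.
\item \cref{lemma_uniform_near_diag} yields $R((x,px),\alpha)\cap R(px,V)\subseteq R(x,U)$.
\item Apply \cref{lemma_thick_conclusion} to the left ideal $\{p\}\cup\beta S p$, which contains $p\in\overline{G}$ and lies in $\overline{R((x,px),\alpha)}$. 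It shows that $R((x,px),\alpha)\cap H'$ is thick.
\item Hence $A\cap H'\in\thick\classcap\dcsyndetic=\central$ by \cref{thm_central_is_thick_classcap_dcsyndetic}.
\end{itemize}
Here $p$ need not be minimal. The minimal idempotent witnessing centrality sits in an unprescribed minimal left ideal inside $\overline{H'}$. Countability enters through \cref{lemma_thick_conclusion}, which transfers thickness from $R((x,px),\alpha)$ to its intersection with $H'$; it is not used for diagonalization.
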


We prove \cref{mainthm_local_ipstar_in_cstar} in \cref{sec_strong_ip_is_strong_central}. To further demonstrate the flexibility of our approach, we outline in \cref{sec_higher_order_analogues} a likely path to recover the full results of Host and Kra in \cite{host_kra_2011} in the integers using higher-order combinatorial input from Huang, Shao, and Ye from \cite{huang_shao_ye_2016}.

\subsection{Furstenberg families as algebraic objects}
\label{sec_ff_as_algebra_objects}

A \emph{Furstenberg family} is an upward-closed collection of subsets of a (discrete) semigroup $S$.
They are named after Hillel Furstenberg, who introduced several important families \cite{Furstenberg81} and used them to describe recurrence properties of topological dynamical systems to great effect.
The collections introduced above -- syndetic sets, thick sets, $\Delta$ sets, $\Delta^*$ sets, \bohrzero{} sets, and sets of Bohr recurrence -- are all examples of Furstenberg families.
The number of such families relevant to combinatorics and dynamics continues to grow as notions are refined and generalized; the curious reader might peruse a recent survey by Hindman \cite{hindman_2020}, where over 50 such families are described.

Because Furstenberg families are upward-closed, membership in a Furstenberg family indicates ``largeness.''
This idea is refined by the notion of filters and partition regular families, both of which are essential to the subject.
A Furstenberg family $\family$ is a \emph{filter} if it is non-empty and for all $A, B \in \family$, we have that $A \cap B \in \family$.
A Furstenberg family $\family$ is \emph{partition regular} if it is not $\powerset(S)$, the power set of $S$, and for all $A, B \subseteq S$ for which $A \cup B \in \family$, we have that $A \in \family$ or $B \in \family$.
(We discuss our particular conventions at greater length in \cref{sec_filters_and_pr_fams_algebra}.)
Partition regular filters -- called \emph{ultrafilters} -- are objects that play an important role in this paper and in topological dynamics more broadly.

While Furstenberg families are organizational and descriptive tools, they are also useful as computational objects.
We will support this assertion by outlining three examples in this subsection and describing a new algebra on families in the next.  The most detailed accounting of an algebra of families that we know of is given by Akin \cite[Ch. 2]{akin_1997}, a source which we cite more specifically numerous times below.

Perhaps the most familiar operation on families, going back at least to Furstenberg \cite[Ch. 9.1]{furstenberg_book_1981}, is the \emph{family dual} map, $*: \powerset(\powerset(S)) \to \powerset(\powerset(S))$, defined by 
\begin{align}
\label{eqn_fam_dual_def}
    \family^* = \big\{A \subseteq S : \forall B \in \family, \ A \cap B \neq \emptyset\big\}.
\end{align}
It is not hard to check that the dual is an antitone\footnote{For all Furstenberg families $\family$ and $\familytwo$, $\family \subseteq \familytwo$ implies $\familytwo^* \subseteq \family^*$.} involution on the subset of $\powerset(\powerset(S))$ of Furstenberg families.
Restricted to filters, the dual is a bijection between filters and partition regular families, and ultrafilters are precisely the self-dual filters.
The dual of $\family$ is often denoted by $k\family$ in the literature, as it is by Akin, who goes on to describe a generalized dual ``with respect to a family''; see \cite[Pg. 27]{akin_1997}.

Another frequent operation on Furstenberg families is best introduced by way of the notion of piecewise syndeticity.  
A set is \emph{piecewise syndetic} if it is equal to the intersection of a syndetic set and a thick set.
It is a fact that the family of piecewise syndetic sets is partition regular; in the integers, this result can be traced back to Brown \cite[Lemma 1]{brown_1971}, and it has appeared since then numerous times independently in various, more general forms.
Inspired by this, for an arbitrary Furstenberg family $\family$, we are led to consider the family comprised of intersections of members of $\family$ with members of $\family^*$.
This family is, incredibly, always partition regular; see \cite[Prop. 2.1 (e)]{akin_1997}, \cite[Prop. 2.5 (h)]{christopherson_johnson_2022}.
Given families $\family$ and $\familytwo$, the more general operation is that of forming the family of sets of the form $A \cap B$ where $A \in \family$ and $B \in \familytwo$.
Akin denotes this by operation by $\family \cdot \familytwo$; we discuss it in the next subsection.

A third example of Furstenberg families as computational objects is derived from the algebra of ultrafilters.
The binary operation on a semigroup $S$ is well-known to lift to a binary operation on $\beta S$, the collection of ultrafilters on $S$.
The resulting algebra has been intensively studied and has found myriad applications in combinatorics and dynamics \cite{ultrafilters_book_2010,hindman_strauss_book_2012}.
The same operation defining the product of ultrafilters meaningfully describes the product of arbitrary Furstenberg families.
The third and fourth authors explored some of this algebra and its applications in \cite{glasscock_le_2025,glasscock_le_synd_idem_filters_2025}.
Though we do not develop this direction in this paper, it would be interesting to understand how this algebra of families interacts with the algebra described next (see \cref{sec_family_meet_with_algebra}).

\subsection{A new Furstenberg family algebra via family meet and join}
\label{sec_intro_a_new_algebra}

We introduce an algebra on Furstenberg families based on two binary operations which we call \emph{family meet and join}.
These operations are best defined more broadly on $\powerset(\powerset(S))$, then restricted to $\families(S)$, the collection of Furstenberg families of $S$.  While the historical context for Furstenberg families is for collections of subsets of discrete semigroups, the abstract results we deal with here do not use the semigroup structure on $S$.  We proceed in this section by assuming only that $S$ is a set.

For $\family, \familytwo \in \powerset(\powerset(S))$, we define the \emph{family meet} operator, $\classcapdual: \mathcal{P}(\mathcal{P}(S))^2 \to \mathcal{P}(\mathcal{P}(S))$, by
   \[\family \classcapdual \familytwo = \big\{A \subseteq S : \forall B \in \family^*, \ A \cap B \in \familytwo \big\}\]
and the \emph{family join} operator, $\classcap: \powerset(\powerset(S))^2 \to \powerset(\powerset(S))$, by
    \[\family \classcap \familytwo = \big\{C \subseteq S : \exists A \in \family, \ \exists B \in \familytwo, \ C = A \cap B\big\}.\]
The following theorem, proved in \cref{sec_algebra_structure_of_ff}, summarizes the main algebraic properties of these operations.
Recall the definition of the family dual ($\ast$) defined in \eqref{eqn_fam_dual_def}.

\begin{Maintheorem}
    \label{mainthm_algebra_properties}
    Let $S$ be a non-empty set, and denote by $\families(S) \subseteq \powerset(\powerset(S))$ the collection of Furstenberg families of $S$. Family meet ($\classcapdual$) and family join ($\classcap$), restricted to $\families(S)^2$, are associative, commutative, and monotone operators that, together with the family dual ($\ast$), satisfy the De Morgan-type laws
    \begin{align}
    \label{eqn_de_morgan_type_laws}
    (\family \classcap \familytwo)^* = \family^* \classcapdual \familytwo^* \qquad \text{ and } \qquad (\family \classcapdual \familytwo)^* = \family^* \classcap \familytwo^*.
    \end{align}
\end{Maintheorem}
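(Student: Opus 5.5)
Our plan is to first establish the properties of family join ($\classcap$) directly, then prove one De Morgan law by hand, and derive everything about family meet ($\classcapdual$) from it using the fact that $\ast$ is an antitone involution on $\families(S)$. Throughout we use that for Furstenberg families $\family$, $\familytwo$ the join $\family \classcap \familytwo$ is again upward closed. Indeed, if $A \cap B \subseteq C$ with $A \in \family$ and $B \in \familytwo$, then $C = (A \cup C) \cap (B \cup C)$, and $A \cup C \in \family$, $B \cup C \in \familytwo$. Upward closure of $\family \classcapdual \familytwo$ is immediate from upward closure of $\familytwo$.

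\textbf{Join.} Commutativity holds because $A \cap B = B \cap A$. For associativity, both $(\family \classcap \familytwo) \classcap \familythree$ and $\family \classcap (\familytwo \classcap \familythree)$ equal the collection of sets $A \cap B \cap C$ with $A \in \family$, $B \in \familytwo$, $C \in \familythree$. Monotonicity is clear from the definition.

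\textbf{First De Morgan law.} We prove $(\family \classcap \familytwo)^* = \family^* \classcapdual \familytwo^*$, using $\family^{**} = \family$. Unwinding the right side, $C \in \family^* \classcapdual \familytwo^*$ means that for every $A \in \family$, the set $C \cap A$ lies in $\familytwo^*$. That is, $C \cap A \cap B \neq \emptyset$ for all $A \in \family$ and $B \in \familytwo$. This is exactly the statement that $C$ meets every member of $\family \classcap \familytwo$, i.e.\ $C \in (\family \classcap \familytwo)^*$.

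\textbf{Second law and properties of meet.} Apply the first law to $\family^*$ and $\familytwo^*$ to get $(\family^* \classcap \familytwo^*)^* = \family \classcapdual \familytwo$. Dualizing both sides and using involutivity (the join of families is a family) gives $(\family \classcapdual \familytwo)^* = \family^* \classcap \familytwo^*$. Hence $\family \classcapdual \familytwo = (\family^* \classcap \familytwo^*)^*$ on $\families(S)$. Commutativity and associativity of meet then transfer from join: for associativity, $(\family \classcapdual \familytwo) \classcapdual \familythree = ((\family \classcapdual \familytwo)^* \classcap \familythree^*)^* = (\family^* \classcap \familytwo^* \classcap \familythree^*)^*$, which is symmetric. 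Monotonicity follows because meet is a composition of two antitone maps with the monotone join.

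The only delicate points are bookkeeping. One is checking that each intermediate object is a Furstenberg family, so that involutivity applies; the closure computation above handles this. The other is degenerate cases such as $\emptyset$ and $\powerset(S)$, and the non-emptiness of $S$. These cases are harmless because none of the identities used require non-emptiness or properness of the families involved.
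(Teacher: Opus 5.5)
Your proof is correct and follows essentially the paper's route: you establish the join properties directly, prove $(\family \classcap \familytwo)^* = \family^* \classcapdual \familytwo^*$ by unwinding definitions with $\family^{**} = \family$, and obtain the second law by applying the first to the duals, then transfer associativity of meet from associativity of join. The only deviation is that you also derive commutativity and monotonicity of meet from the identity $\family \classcapdual \familytwo = (\family^* \classcap \familytwo^*)^*$, whereas the paper checks these two directly from the definition; it notes that the De Morgan route also works for commutativity.
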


While the proof of \cref{mainthm_algebra_properties} is entirely elementary, we have found the resulting algebra to be extraordinarily helpful in formulating and re-formulating theorems and conjectures concerning the relationships between Furstenberg families.
A illustrative example: to demonstrate that the family $\family \classcap \family^*$ is partition regular -- and thus, for example, to conclude that the family of piecewise syndetic sets is partition regular -- takes little more than demonstrating that $\family \classcapdual \family^*$ is a filter, a one-line exercise from the definitions.
We elaborate on this in \cref{sec_recitation}, where we work by way of examples to convince the skeptical reader as to the utility of the algebra.

As discussed in the previous subsection, the family join operator has appeared several times before in the literature; Akin and Glasner \cite{glasner_akin_2016} call it \emph{join}.
As far as we know, the family meet operator, as written here, is new, though it could be described equivalently in terms of Akin's generalized family dual \cite[Pg. 27]{akin_1997}.
We note that our algebra differs in emphasis from Akin's in that it places filters and partition regular families on equal footing by focusing on family meet as a binary operator.  This change in emphasis aids in a different variety of applications.

Despite what may be suggested by the notation and terminology, it is not the case that $(\families(S),\classcapdual,\classcap)$ forms a lattice; see \cref{sec_filters_and_pr_fams_algebra}.  
Nevertheless, beyond the basic properties described in \cref{mainthm_algebra_properties}, there are a number of true sentences in the algebra involving arbitrary families and family dual, meet, and join.  For example, for all families $\family$, $\familytwo$, and $\familythree$,
\begin{enumerate}
    \item $\family \classcap (\familytwo \classcapdual \familythree) \subseteq (\family \classcap \familytwo) \classcapdual \familythree$; 
    
    \item $\family^* \classcap (\family \classcapdual \familytwo) \subseteq \familytwo$; and
    
    \item
    $\familythree^* \subseteq \family \classcapdual (\family^* \classcap \familytwo^*) \text{ if and only if } \family \classcapdual \familytwo \subseteq \family \classcapdual \familythree$.
\end{enumerate}

We do not attempt a systematic study of all such true sentences and leave the elementary proofs of these to the curious reader.
Instead, in this paper, we focus on the algebra that leads us to examples and applications.
For example, the DeMorgan-type laws apply to unions and intersections of families as expected (\cref{lemma_demorgan_for_union_and_int}):
\[\bigg( \bigcap_i \family_i \bigg)^* = \bigcup_i \family_i^* \qquad \text{ and } \qquad \bigg( \bigcup_i \family_i \bigg)^* = \bigcap_i \family_i^*.\]
Combined with the fact that the family dual is antitone and the operator DeMorgan-type laws from \cref{mainthm_algebra_properties}, we can quickly show that Theorems \ref{mainthm_gen_of_bfw} and \ref{mainthm_gen_of_bfw_reformulated} are equivalent.
In the proof that follows, we denote the families of syndetic sets, thick sets, $\Delta$ sets, $\Delta^*$ sets, \bohrzero{} sets, and sets of Bohr recurrence by $\syndetic$, $\thick$, $\Delta$, $\Delta^*$, $\dcsyndeticbohr$, and $\dcthickbohr$, respectively.

\begin{proof}[Proof that Theorems \ref{mainthm_gen_of_bfw} and \ref{mainthm_gen_of_bfw_reformulated} are equivalent]
    A short exercise from the definitions (see \cref{lemma_desc_of_very_strong}) shows that \cref{mainthm_gen_of_bfw} stated in terms of family meet and join is
    \[\Delta^* \subseteq \bigcap_{H \in \thick} \big( \ \thick_{\cap H} \classcap \dcsyndeticbohr \ \big),\]
    where $\thick_{\cap H}$ denotes the family of sets $A \subseteq S$ for which $A \cap H \in \thick$.
    Applying the dual, the antitone property, and the DeMorgan-type laws, we have that this is equivalent to
    \[\bigcup_{H \in \thick} \big( \ (\thick_{\cap H})^* \classcapdual \dcthickbohr \ \big) \subseteq \Delta,\]
    which is precisely \cref{mainthm_gen_of_bfw_reformulated}.
\end{proof}

\subsection{Outline}

The paper is organized as follows.  Preliminaries on semigroups, topological dynamics, and ultrafilters are given in \cref{sec_prelims}.
We describe the algebra of the family dual, meet, and join and prove \cref{mainthm_algebra_properties} in \cref{sec_algebra_structure_of_ff}, then we follow with concrete examples of the utility of the algebra in \cref{sec_recitation}, including several new characterizations of the family of $\central^*$ sets.
We prove Theorems \ref{mainthm_gen_of_bfw}, \ref{mainthm_gen_of_bfw_reformulated}, and \ref{mainthm_local_ipstar_in_cstar} in \cref{sec_local_dy_struct} and conclude by outlining a number of open directions in \cref{sec_open_directions}.

\subsection{AI use disclosure}

The authors used generative AI (including OpenAI's ChatGPT, Google's Gemini, and Anthropic's Claude) to assist with literature searches and language editing.
All references suggested by these tools were independently verified by the authors.
Generative AI was not used to generate mathematical results or proofs; all mathematical content is the work of the authors, and the authors take full responsibility for it.

\subsection{Lean verification}

We verified all of the results in this paper in Lean.  The verification code can be found on Github at: \url{https://github.com/dglasscockUML/NSF_LEAPS}.  In addition, we registered the main results appearing in the introduction with Palomar (\url{https://palomar-registry.org/}).  A complete link to the Palomar registry entry will be provided in an updated version of this paper.

\subsection{Acknowledgments}

The first three authors gratefully acknowledge support from the US National Science Foundation under a LEAPS Grant, Award Number 2418589.

\section{Preliminaries}
\label{sec_prelims}

\subsection{Sets and semigroups}

Let $S$ and $X$ be sets.  When $S$ acts by maps with domain $X$, we write $sx$ for the image of $x \in X$ under the map $s \in S$.  For $H \subseteq S$ and $A \subseteq X$, we write
\begin{center}
\begin{tabular}{r@{\;}c@{\;}l@{\qquad}r@{\;}c@{\;}l}
$Hx$ & $\defeq$ & $\{sx \mid s \in H\},$
&
$sA$ & $\defeq$ & $\{sx \mid x \in A\},$
\\
$HA$ & $\defeq$ & $\{sx \mid s \in H,\ x \in A\},$
&
$s^{-1}A$ & $\defeq$ & $\{x \in X \mid sx \in A\}.$
\end{tabular}
\end{center}
The set $A$ is \emph{$S$-invariant} if $SA \subseteq A$. The set $S$ will always act on $X^2$ via the diagonal action $s(x,y) = (sx,sy)$.  We refer throughout to the set $\{(x,x) : x \in X\}$ as \emph{the diagonal} and denote it by $\Delta_X$ or simply $\Delta$.

The semigroups in this paper are all non-empty and discrete, written multiplicatively, and are not assumed to contain an identity element.

\subsection{Topological dynamics}
\label{sec_dynamics}

Throughout the paper, we work with compact Hausdorff spaces. Although we do not use the language of uniform spaces, we note that compact Hausdorff spaces are precisely compact uniform spaces. Indeed, every compact Hausdorff space admits a unique uniformity given by the open sets containing the diagonal, whereas every compact uniform space is compact and completely regular, and therefore Hausdorff.

\begin{lemma}
\label{lemma_uniform_near_diag}
\lean{nbhdOfDiagForcesOtherSetContainment}
\leanok{}
Let $X$ be a compact Hausdorff space.  Let $y \in X$ and $U \subseteq X$ be open and contain $y$.  There exists an open set $V \subseteq U$ containing $y$ and an open set $\alpha \subseteq X^2$ that contains the diagonal such that for all $x, z \in X$, if $(x,z) \in \alpha$ and $z \in V$, then $x \in U$.
\end{lemma}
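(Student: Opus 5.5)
The plan is to use regularity of $X$ to shrink $U$, and then build $\alpha$ from a two-piece open cover of $X$. Since $X$ is compact Hausdorff, it is regular. So there is an open set $V$ with $y \in V \subseteq \overline{V} \subseteq U$; this $V$ is the one in the statement.

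Next set $W = X \setminus \overline{V}$, which is open. The sets $U$ and $W$ together cover $X$: a point not in $U$ is not in $\overline{V}$, so it lies in $W$. Now define
\[
\alpha \defeq (U \times U) \cup (W \times W).
\]
This set is open in $X^2$, being a union of products of open sets. It contains the diagonal because every $x \in X$ lies in $U$ or in $W$, so $(x,x) \in U\times U$ or $(x,x) \in W \times W$.

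Finally we check the defining property. Let $(x,z) \in \alpha$ with $z \in V$. Since $V \subseteq \overline{V}$, the point $z$ is not in $W$, so $(x,z) \notin W \times W$. Hence $(x,z) \in U \times U$, which gives $x \in U$.

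No step here is a genuine obstacle. The only point needing care is obtaining $\overline{V} \subseteq U$, which requires regularity of compact Hausdorff spaces. That holds because a point and a disjoint closed (hence compact) set can be separated by open sets. Uniformities are not needed at all.
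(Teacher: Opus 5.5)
Your proof is correct and follows essentially the same route as the paper: regularity gives $y \in V \subseteq \overline{V} \subseteq U$, and then you take an open neighborhood of the diagonal that misses $(X \setminus U) \times \overline{V}$. The only difference is cosmetic. The paper takes $\alpha$ to be the whole complement $X^2 \setminus \big((X\setminus U) \times \overline{V}\big)$, which contains your symmetric set $(U \times U) \cup (W \times W)$, and either choice works for the same reason.
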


\begin{proof}
\leanok{}
    By the regularity of $X$, there exists an open set $V \subseteq X$ containing $y$ and such that $\overline{V} \subseteq U$.  Put $K = X \setminus U$, and define
    \[\alpha = X^2 \setminus (K \times \overline{V}).\]
    The set $\alpha$ is clearly open, and it contains the diagonal since $K \cap \overline{V} = \emptyset$.  Finally, suppose $x, z \in X$ are such that $(x,z) \in \alpha$ and $z \in V$.  It must be that $x \in X \setminus K = U$, as desired.
\end{proof}

\subsubsection{Systems}
\label{sec_hom_systems}

A \emph{(topological dynamical) system} is an action of a semigroup $S$ on a compact Hausdorff space $X$ by continuous maps.
We say that $X$ is an \emph{$S$-system} and we write the action of $S$ by juxtaposition: for $s \in S$ and $x \in X$, we denote by $sx$ the image of $x$ under the map $s : X \to X$.  The \emph{orbit} and \emph{orbit closure} of $x \in X$ are the sets $Sx$ and $\overline{Sx}$, respectively.

An $S$-system $Y$ is said to be a \emph{factor} of $X$ if there is a continuous surjection $\pi: X \to Y$ satisfying $s (\pi x) = \pi (s x)$ for all $s \in S$ and $x \in X$.
In the event that such a factor map exists that is a homeomorphism, we say that the systems $X$ and $Y$ are \emph{isomorphic}.  A \emph{subsystem} of $X$ is a non-empty, closed, $S$-invariant set $Z \subseteq X$ considered as an $S$-system by restricting the maps $s \in S$ to $Z$.
Orbit closures of points are good examples of subsystems.

Here is a situation we will encounter a few times in the paper.
Let $\varphi: S \to T$ be a semigroup homomorphism.
If $X$ is a $T$-system, then there is a natural way to make $X$ into an $S$-system: for $s \in S$ and $x \in X$, define $sx = \varphi(s)x$.
Since $\varphi$ is a homomorphism, it is immediate that this yields an action of $S$ on $X$ via continuous maps.

\subsubsection{Set and point visit times}

Let $S$ be a semigroup, $X$ be an $S$-system, $x \in X$, and $U, V \subseteq X$.  The \emph{set of visit times} of $x$ to $U$ and $V$ to $U$ are
\begin{align*}
    R(x,U) &\defeq \big\{s \in S \mid sx \in U\big\} \text{ and} \\
    R(V,U) &\defeq \big\{s \in S \mid sV \cap U \neq \emptyset\big\}.
\end{align*}

\begin{lemma}
\label{lemma_return_time_algebra_one}
\lean{visitsToPreimages}
\leanok{}
    Let $S$ be a semigroup, $X$ be a $S$-system, $x \in X$, $U \subseteq X$, and $s \in S$. We have
    \[s^{-1}R(x,U) = R(x,s^{-1}U).\]
\end{lemma}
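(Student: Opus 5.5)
The identity is proved by unwinding the definitions and checking that both sides have the same members. We fix $t \in S$ and show that $t \in s^{-1}R(x,U)$ if and only if $t \in R(x,s^{-1}U)$.

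By the definition of the preimage under the action of $s$ on $S$ (by multiplication), $t \in s^{-1}R(x,U)$ means $st \in R(x,U)$. By the definition of visit times, this means $(st)x \in U$.

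On the other side, $t \in R(x,s^{-1}U)$ means $tx \in s^{-1}U$. By the definition of $s^{-1}U$ for the action of $S$ on $X$, this is equivalent to $s(tx) \in U$.

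The two conditions agree because $X$ is an $S$-system, so the action is compatible with the semigroup operation: $(st)x = s(tx)$. The two sets therefore have the same elements.

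There is no real obstacle. The only point needing care is the convention: $s^{-1}$ applied to a subset of $S$ refers to the left multiplication action of $S$ on itself, which is what makes the associativity of the action apply. Commutativity is not needed.
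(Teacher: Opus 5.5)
Your proof is correct and takes the same approach as the paper, which simply leaves this definition-unwinding to the reader. Your reading of $s^{-1}R(x,U)$ as $\{t \in S : st \in R(x,U)\}$ matches how the paper later uses the identity, and the only input needed is the action law $(st)x = s(tx)$.
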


\begin{proof}
\leanok{}
    This is an easy consequence of the definitions and is left to the reader.
\end{proof}

\begin{lemma}
    \label{lemma_return_to_intersection}
    \label{lemma_return_to_union}
    \lean{visitToInter}
    \lean{visitToUnion}
    \leanok{}
    Let $S$ be a semigroup and $X$ be an $S$-system.  For all $x \in X$ and $U_1, \ldots, U_k \subseteq X$,
    \begin{align*}
        R\left(x, \bigcup_{i=1}^k U_i \right) = \bigcup_{i=1}^k R(x,U_i) \qquad \text{ and } \qquad
        R\left(x, \bigcap_{i=1}^k U_i \right) = \bigcap_{i=1}^k R(x,U_i).
    \end{align*}
\end{lemma}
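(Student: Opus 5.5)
The plan is to unwind the definition of the set of visit times and verify both identities by element-chasing, one element $s \in S$ at a time; the semigroup structure plays no role. Since $R(x,U) = \{s \in S \mid sx \in U\}$ depends only on where the single point $sx$ lands, each identity reduces to the corresponding Boolean identity for membership of the point $sx$ in finite unions and intersections of subsets of $X$.

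For the union, I will fix $s \in S$ and chain equivalences. The statement $s \in R(x, \bigcup_{i=1}^k U_i)$ holds if and only if $sx \in \bigcup_{i=1}^k U_i$. That in turn holds if and only if there is some $i \in \{1,\ldots,k\}$ with $sx \in U_i$. Finally, this holds if and only if $s \in R(x,U_i)$ for some $i$, that is, $s \in \bigcup_{i=1}^k R(x,U_i)$.

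The intersection is identical with ``for some $i$'' replaced by ``for all $i$''. Because every step is a biconditional, both set inclusions follow at once, and set extensionality gives the equalities.

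There is no genuine obstacle. The only point worth noting is that nothing about finiteness or topology is used, so the same argument works for arbitrary indexed families. In a formal (Lean) setting, the identities are precisely the statement that preimages under the map $s \mapsto sx$ commute with unions and intersections, since $R(x,U)$ is the preimage of $U$ under this map. I would therefore invoke the library lemmas for preimages of unions and intersections rather than chase elements by hand.
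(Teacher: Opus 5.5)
Your proposal is correct and matches the paper, which leaves the lemma to the reader as an easy consequence of the definitions. Your element-chase is exactly that unwinding, equivalently the fact that $R(x,\cdot)$ is the preimage under $s \mapsto sx$.
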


\begin{proof}
\leanok{}
    This is an easy consequence of the definitions and is left to the reader.
\end{proof}

For the following lemma, recall the definition of a partition regular family from \cref{sec_ff_as_algebra_objects}.

\begin{lemma}
\label{lemma_pr_family_concentration}
\lean{visitTimeConcentrationForPRFamily}
\leanok{}
    Let $X$ be an $S$-system, $x \in X$, and $U \subseteq X$ be non-empty and closed.  Let $\family$ be a partition regular family of subsets of $S$. If $R(x,U) \in \family$, then there exists $y \in U$ such that for all open $V \subseteq X$ containing $y$, $R(x,V) \in \family$.
\end{lemma}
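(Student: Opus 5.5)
We argue by contradiction, using the compactness of $U$. Suppose that for every $y \in U$ there is an open set $V_y \subseteq X$ containing $y$ with $R(x,V_y) \notin \family$. The sets $\{V_y\}_{y \in U}$ form an open cover of $U$. Since $U$ is a closed subset of the compact space $X$, it is compact, so there are finitely many points $y_1, \ldots, y_k \in U$ with $U \subseteq V_{y_1} \cup \cdots \cup V_{y_k}$. Here $k \geq 1$ because $U$ is non-empty.

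By \cref{lemma_return_to_union} and the monotonicity of visit times in the second argument,
\[R(x,U) \subseteq R\bigg(x, \bigcup_{i=1}^k V_{y_i}\bigg) = \bigcup_{i=1}^k R(x,V_{y_i}).\]
Since $\family$ is upward closed and $R(x,U) \in \family$, the union $\bigcup_{i=1}^k R(x,V_{y_i})$ belongs to $\family$.

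Partition regularity extends to finite unions by induction on $k$: if $A_1 \cup \cdots \cup A_k \in \family$, write it as $A_1 \cup (A_2 \cup \cdots \cup A_k)$ and apply the two-set definition repeatedly. Hence some $R(x,V_{y_i})$ lies in $\family$, contradicting the choice of $V_{y_i}$. Therefore some $y \in U$ has $R(x,V) \in \family$ for every open $V$ containing $y$.

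The main points to check are the following.
\begin{itemize}
    \item The case $k=1$ needs no partition regularity; it follows from upward closure alone.
    \item Non-emptiness of $U$ is what guarantees that the finite subcover is non-empty. If $U$ were empty, we would have $R(x,U) = \emptyset$, and the paper's convention that a partition regular family is not all of $\powerset(S)$, together with upward closure, gives $\emptyset \notin \family$. So this case cannot occur under the hypothesis anyway.
    \item Closedness of $U$ is used only to get compactness.
\end{itemize}
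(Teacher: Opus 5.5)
Your proof is correct and is essentially the paper's argument: cover $U$ by open sets $V_y$ with $R(x,V_y)\notin\family$, take a finite subcover by compactness, and apply \cref{lemma_return_to_union} together with upward closure and partition regularity to reach a contradiction. The paper states this as a contrapositive rather than a contradiction; your side remarks about the case $k=1$ and about $U=\emptyset$ are accurate but not needed.
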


\begin{proof}
\leanok{}
    We prove the contrapositive.  Suppose for all $y \in U$, there exists an open $V_y \subseteq X$ containing $y$ such that $R(x,V_y) \not\in \family$.  By the compactness of $U$, there exists a finite set $F \subseteq X$ such that $U \subseteq \bigcup_{y \in F} V_y$. By \cref{lemma_return_to_union}, we see that
    \[
        R(x,U) \subseteq R \left(x,\bigcup_{y \in F} V_y \right) = \bigcup_{y \in F} R(x,V_y).
    \]
    Since no $R(x,V_y)$ is in $\family$ and $\family$ is partition regular, we have that $\bigcup_{y \in F} R(x,V_y) \not\in \family$, and hence that $R(x,U) \not\in \family$, as desired.
\end{proof}

\subsubsection{Uniform recurrence and minimality}

An $S$-system $X$ is \emph{minimal} if the orbit $Sx$ of every point $x \in X$ is dense in $X$. The following facts follow from the standard arguments:
\begin{enumerate}
    \item \label{item_min_iff_triv_subsystems}
    a system $X$ is minimal if and only if $X$ is the only subsystem of $X$, and
    
    \item \label{item_every_system_has_minimal}
    every system $X$ has a minimal subsystem.
\end{enumerate}
If $\pi: X \to Y$ is a factor map of $S$-systems and $Z$ is a subsystem of $X$, then $\pi Z$ is a subsystem of $Y$.  It follows from this and Fact \ref{item_min_iff_triv_subsystems}. that if $Z$ is a minimal subsystem of $X$, then $\pi Z$ is a minimal subsystem of $Y$.

If for all open $U \subseteq X$ containing a point $x \in X$, the set $R(x,U)$ is syndetic (recall the definition from the introduction), the point $x$ is called \emph{$S$-uniformly recurrent}. The following facts follow from the standard arguments:
\begin{enumerate}
    \setcounter{enumi}{2}
    \item \label{item_ur_implies_min}
    if $x$ is uniformly recurrent, then $\overline{Sx}$ is a minimal $S$-system, and
    
    \item \label{item_minimal_implies_ur}
    all points in a minimal system are uniformly recurrent.
\end{enumerate}
All four facts above will be used in this paper without special mention.

\begin{lemma}
\label{lemma_minimal_comm_action_implies_surjective}
\lean{minimalCommActionIsSurjective}
    Let $S$ be a commutative semigroup and $X$ be a minimal $S$-system.  For all $s \in S$, the map $s: X \to X$ is surjective.
\end{lemma}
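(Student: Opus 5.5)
We show that for each $s \in S$ the image $sX$ is a subsystem of $X$, and then invoke minimality (Fact \ref{item_min_iff_triv_subsystems}) to conclude that $sX = X$.

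First, $sX$ is non-empty, since $X$ is non-empty; a system lives on a compact Hausdorff space, and we take $X$ to be non-empty, as is implicit in minimality.

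Second, $sX$ is closed. It is the continuous image of the compact space $X$, so it is compact, and a compact subset of the Hausdorff space $X$ is closed.

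Third, $sX$ is $S$-invariant, and this is where commutativity enters. Take $t \in S$ and $x \in X$. Then
\[
t(sx) = (ts)x = (st)x = s(tx) \in sX,
\]
so $t(sX) \subseteq sX$ for all $t \in S$, that is, $S(sX) \subseteq sX$.

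Thus $sX$ is a subsystem of $X$. Since $X$ is minimal, its only subsystem is $X$ itself, so $sX = X$, which says exactly that $s \colon X \to X$ is surjective.

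If one prefers the orbit-density definition of minimality, the same conclusion follows directly. Pick any $y \in sX$, say $y = sx$. The invariance computation above gives $Sy \subseteq sX$. Minimality makes $Sy$ dense in $X$, and $sX$ is closed, so $X = \overline{Sy} \subseteq sX$.

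The only step needing any care is the invariance, which genuinely uses commutativity; everything else is routine.
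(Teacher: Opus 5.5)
Your proof is correct and is the paper's argument: by commutativity $sX$ is a closed, $S$-invariant subset of $X$, so minimality forces $sX = X$. You just spell out the closedness (a compact image in a Hausdorff space) and the invariance computation $t(sx) = s(tx)$, both of which the paper leaves implicit.
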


\begin{proof}
\leanok{}
    Let $s \in S$.  Since $S$ is commutative, the set $sX$ is a closed, $S$-invariant subset of $X$.  By minimality, $sX = X$, showing that $s$ is surjective, as desired.
\end{proof}

\begin{theorem}
    \label{thm_ur_lift}
    \lean{liftUniformRecurrentPoint}
    \leanok{}
    Let $S$ be a semigroup, and let $\pi: X \to Y$ be a factor map of $S$-systems.  For all $S$-uniformly recurrent points $y \in Y$, there exists an $S$-uniformly recurrent point $x \in X$ such that $\pi x = y$.
\end{theorem}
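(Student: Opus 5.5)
The plan is to find a minimal subsystem of the fiber-restricted system and take a uniformly recurrent point there. Since $y$ is $S$-uniformly recurrent, the orbit closure $Z = \overline{Sy}$ is a minimal subsystem of $Y$. Consider the closed set $\pi^{-1}Z \subseteq X$. It is non-empty, because $\pi$ is surjective. It is $S$-invariant, since $\pi(sx) = s\pi(x) \in Z$ whenever $\pi(x) \in Z$. So $\pi^{-1}Z$ is a subsystem of $X$, and by Fact \ref{item_every_system_has_minimal} it contains a minimal subsystem $M$. Then $\pi M$ is a subsystem of $Z$, and since $Z$ is minimal, $\pi M = Z$. In particular $y \in \pi M$, so there exists $x \in M$ with $\pi x = y$. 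Since $M$ is minimal, Fact \ref{item_minimal_implies_ur} says $x$ is $S$-uniformly recurrent as a point of the system $M$.

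It remains to transfer uniform recurrence from $M$ to $X$. This is immediate. If $U \subseteq X$ is open and contains $x$, then $U \cap M$ is relatively open in $M$ and contains $x$. Moreover $R(x,U) = R(x, U \cap M)$, because $Sx \subseteq M$. Hence $R(x,U)$ is syndetic.

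The only step needing any care is $\pi M = Z$. Here $\pi M$ is compact, hence closed, and $S$-invariant by equivariance, so it is a subsystem of the minimal system $Z$; by Fact \ref{item_min_iff_triv_subsystems} it equals $Z$. The paper already records this in the sentence after the facts on minimality. I do not expect a genuine obstacle: commutativity is not needed, and no properties of the semigroup enter beyond the standard facts.
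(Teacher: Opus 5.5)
Your proposal is correct and follows the paper's proof essentially step for step: you pass to the subsystem $\pi^{-1}\overline{Sy}$, take a minimal subsystem inside it, use minimality of $\overline{Sy}$ to conclude its image is all of $\overline{Sy}$, and pick a preimage of $y$. Your explicit argument that uniform recurrence in the minimal subsystem transfers to $X$ is a detail the paper leaves implicit, as is the fact $y \in \overline{Sy}$, which you also use without comment (it holds because each $R(y,U)$ is syndetic and hence non-empty).
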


\begin{proof}
    \leanok{}
    Let $y \in Y$ be $S$-uniformly recurrent. Let $Z$ be the $S$-orbit closure of $y$.  It is easy to check that $\pi^{-1} Z$ is a subsystem of $X$.  By Fact \ref{item_every_system_has_minimal}. above, there exists a minimal subsystem $W \subseteq \pi^{-1} Z$ of $\pi^{-1} Z$ and, hence, of $X$.  Since images under factor maps of minimal systems are minimal systems, we have that $\pi W \subseteq Z$ is a minimal subsystem of $Y$.  But $Z$ is minimal, so $\pi W = Z$.  Since $y \in Z$, there exists $x\in W$ such that $\pi x = y$.  Since $W$ is minimal and $x \in W$, we see by Fact \ref{item_minimal_implies_ur}. that $x$ is $S$-uniformly recurrent, as desired.
\end{proof}

\subsection{Equicontinuity and Bohr sets}

The results in this section are standard for group actions but less well known or entirely new for semigroup actions.  Throughout, as above, $S$ is a semigroup and $X$ is an $S$-system.

\subsubsection{Equicontinuity, proximality, and distality}

An $S$-system $X$ is \emph{equicontinuous} if the family $\{s: X \to X \mid s \in S\}$ is (uniformly) equicontinuous: for all open $\beta \subseteq X^2$ containing the diagonal, there exists an open $\alpha \subseteq X^2$ containing the diagonal such that $S \alpha \subseteq \beta$.  It is quick to check that subsystems, factors, and arbitrary products of equicontinuous systems are equicontinuous.

\begin{lemma}
    \label{lemma_hom_system_facts}
    \lean{homSystemOfEquicontinuousSystemIsEquicontinuous}
    \leanok{}
    Let $\varphi: S \to T$ be a semigroup homomorphism.  If $X$ is an equicontinuous $T$-system, then via the homomorphism $\varphi$, it is an equicontinuous $S$-system.
\end{lemma}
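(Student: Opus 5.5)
The argument is a direct unwinding of the definitions, because the $S$-action on $X$ induced by $\varphi$ is given by the same maps as part of the $T$-action. For $s \in S$, the map $x \mapsto sx$ on $X$ is by definition the map $x \mapsto \varphi(s)x$. Hence
\[
\{\, s : X \to X \mid s \in S \,\} \;=\; \{\, \varphi(s) : X \to X \mid s \in S \,\} \;\subseteq\; \{\, t : X \to X \mid t \in T \,\}.
\]
The earlier discussion in \cref{sec_hom_systems} already shows that this is an action of $S$ on the compact Hausdorff space $X$ by continuous maps, so $X$ is an $S$-system. Only equicontinuity remains.

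Fix an open set $\beta \subseteq X^2$ containing the diagonal. By equicontinuity of the $T$-system $X$, there is an open $\alpha \subseteq X^2$ containing the diagonal with $T\alpha \subseteq \beta$. For any $s \in S$ and $(x,y) \in \alpha$, the diagonal action gives
\[
s(x,y) \;=\; (\varphi(s)x,\varphi(s)y) \;=\; \varphi(s)(x,y) \;\in\; T\alpha \;\subseteq\; \beta .
\]
Thus $S\alpha \subseteq \varphi(S)\alpha \subseteq T\alpha \subseteq \beta$. Since $\beta$ was arbitrary, the $S$-system $X$ is equicontinuous.

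The only point needing care is the first step: the $S$-action on $X^2$ must be the diagonal action of the induced maps, so that $s(x,y)$ really equals $\varphi(s)(x,y)$. This holds by the convention fixed in the preliminaries, so there is no genuine obstacle; equicontinuity is simply inherited by any subfamily of an equicontinuous family of maps.
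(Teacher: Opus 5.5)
Your proposal is correct and follows the paper's proof: the maps $\{s : X \to X \mid s \in S\}$ are, by definition of the induced action, a subfamily of $\{t : X \to X \mid t \in T\}$, so equicontinuity is inherited. Your explicit check that $S\alpha \subseteq T\alpha \subseteq \beta$ just unwinds this same observation.
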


\begin{proof}
\leanok{}
    By definition, $X$ is an equicontinuous $S$-system if $\{s : X \to X \mid s \in S\}$ is a (uniformly) equicontinuous family of maps.  By the definition of the action, this family of maps is a subset of $\{t : X \to X \mid t \in T\}$, which is (uniformly) equicontinuous by assumption.
\end{proof}

Points $x$ and $y$ in an $S$-system $X$ are \emph{proximal} if for all open $\alpha \subseteq X^2$ containing the diagonal, there exists $s \in S$ such that $s(x,y) \in \alpha$, where we recall that $S$ acts on $X^2$ via the diagonal action.
It is well-known that if the points $x$ and $y$ are proximal, then for all open $\alpha \subseteq X^2$ containing the diagonal, the set $R\big((x,y), \alpha \big)$ is a thick subset of $S$.

A system is called \emph{distal} if no two distinct points are proximal.
That distal systems are semisimple is a well-known fact for group actions; the following lemma records this fact for semigroup actions.
We make use a minimal idempotent ultrafilter on $S$; see \cref{sec_ultrafilters} below for the details.

\begin{lemma}
\label{lemma_distal_implies_semisimple}
\lean{distalImpliesSemisimple}
\leanok{}
    Let $S$ be a semigroup and $X$ be an $S$-system.  If $X$ is distal, then every point in $X$ is $S$-uniformly recurrent.
\end{lemma}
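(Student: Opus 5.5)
The plan is to use a minimal idempotent ultrafilter $p \in \beta S$, that is, an idempotent in a minimal left ideal of $(\beta S,\cdot)$, together with the standard fact that for any $S$-system $X$ and any point $x \in X$, the point $px = \lim_{s \to p} sx$ is uniformly recurrent. This fact is the semigroup version of the Auslander–Ellis argument.

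The argument has three steps.

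\textbf{Step 1: $px$ is uniformly recurrent.} Fix $x \in X$ and let $L$ be the minimal left ideal containing $p$.
\begin{enumerate}
    \item The map $\beta S \to X$ given by $q \mapsto qx$ is continuous, and $(qr)x = q(rx)$ for all $q, r \in \beta S$.
    \item Hence $\overline{S(px)} = \beta S \cdot px \supseteq L\,px = Lx$.
    \item Take any $y = qx$ with $q \in L$. Then $\overline{Sy} \supseteq (\beta S\, q)x = Lx$, since $\beta S\, q = L$ by minimality of $L$.
    \item So $Lx$ is a closed invariant set on which every orbit is dense, i.e.\ a minimal subsystem.
    \item Since $p \in L$, the point $px$ lies in $Lx$. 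By Fact~\ref{item_minimal_implies_ur}, $px$ is $S$-uniformly recurrent.
\end{enumerate}

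\textbf{Step 2: $x$ and $px$ are proximal.} Idempotence gives $p(px) = (pp)x = px$. Therefore
\[ p\,(x,px) = (px,\, p(px)) = (px,px) \in \Delta. \]
Now let $\alpha \subseteq X^2$ be an open set containing the diagonal. Because $(x,px)$ is sent by $p$ to a point of $\alpha$, the set $R\big((x,px),\alpha\big)$ belongs to $p$. In particular it is non-empty, so some $s \in S$ satisfies $s(x,px) \in \alpha$. Thus $x$ and $px$ are proximal.

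\textbf{Step 3: conclude by distality.} Since $X$ is distal, proximal points must be equal, so $x = px$. By Step 1, $x$ is uniformly recurrent. As $x$ was arbitrary, every point of $X$ is $S$-uniformly recurrent.

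The main obstacle is Step 1, because it relies on the ultrafilter machinery: continuity of the map $q \mapsto qx$ in its argument, the identity $(qr)x = q(rx)$, and the existence of minimal idempotents (via Ellis's lemma on compact right-topological semigroups). All of this is what the deferred \cref{sec_ultrafilters} presumably supplies, and I would cite it from there. An alternative proof of Step 1 avoids ultrafilters: show directly that $\overline{Sx}$ contains a minimal set $M$, then use a Zorn-type argument on proximal pairs.
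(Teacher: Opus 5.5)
Your proposal is correct and is the paper's argument: take a minimal idempotent $p$, observe that $px$ is uniformly recurrent and proximal to $x$, and use distality to get $px = x$. The paper cites these two properties of $px$ as standard facts, while you verify them directly, and your verifications are sound.
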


\begin{proof}
\leanok{}
    Suppose $X$ is distal, and let $x \in X$.  Let $p \in \beta S$ be any minimal, idempotent ultrafilter. The point $px$ is uniformly recurrent and proximal to $x$.  Since $X$ is distal and $(x,px)$ is a proximal pair, it must be that $px = x$.  Therefore, $x$ is uniformly recurrent, as desired.
\end{proof}

\subsubsection{The regionally proximal and equicontinuous structure relations}

The regionally proximal relation and equicontinuous structure relation are fundamental relations capturing equicontinuous behavior in $X$.  Both are well-understood for group actions but relatively unexplored for semigroup actions; see \cite{auslander_dai_2019,blahodatna_detmold_glasscock_le_RP_2026,dai_xiao_2017}.

Let $X$ be an $S$-system. The point $x$ is \emph{regionally proximal to $y$}, that is, $(x,y) \in \RP$, if for all open $U \subseteq X$ containing $x$, open $V \subseteq X$ containing $y$, and open $\alpha \subseteq X^2$ containing the diagonal, there exist $x' \in U$, $y' \in V$, and $s \in S$ with
\[
    (sx', sy') \in \alpha.
\]
The point $(x,y)$ belongs to the equicontinuous structure relation, that is, $(x,y) \in \seq$, if $\pi x = \pi y$ for all equicontinuous factors $\pi: X \to Y$.
It follows from standard arguments that $\seq$ is a closed, $S$-invariant equivalence relation and that $X / \seq$ is the largest equicontinuous factor of $X$; see, eg., \cite{blahodatna_detmold_glasscock_le_RP_2026}.

We will make use of two results from \cite{blahodatna_detmold_glasscock_le_RP_2026} concerning the relationships between these relations.
The first is a sufficient condition for the relations to coincide.

\begin{theorem}[{\cite[Theorem B]{blahodatna_detmold_glasscock_le_RP_2026}}]
    \label{mainthm_RP_is_eq_relation}
    Let $S$ be a commutative semigroup and $X$ be a minimal $S$-system.  The regionally proximal relation coincides with the equicontinuous structure relation:
    \[\RP = \seq.\]
    In particular, $\RP$ is an equivalence relation and $X \to X/\RP$ is the largest equicontinuous factor of $X$.
\end{theorem}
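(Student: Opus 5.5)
We plan to prove the two inclusions separately, then deduce the ``in particular'' clause from the fact, recorded above, that $X/\seq$ is the largest equicontinuous factor.

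\textbf{The inclusion $\RP \subseteq \seq$.} This direction is soft and needs neither minimality nor commutativity. Let $\pi: X \to Y$ be an equicontinuous factor and $(x,y) \in \RP$, and suppose toward a contradiction that $\pi x \neq \pi y$.
\begin{enumerate}
\item Choose an open $\beta \subseteq Y^2$ containing the diagonal of $Y$ whose closure misses $(\pi x, \pi y)$.
\item Equicontinuity gives an open $\alpha_Y \supseteq \Delta_Y$ with $S\alpha_Y \subseteq \beta$. Pull back to $\alpha = (\pi\times\pi)^{-1}\alpha_Y$, which is an open set containing $\Delta_X$.
\item Take small neighborhoods $U \ni x$ and $V \ni y$ whose images lie in a set where no point of $\overline{\beta}$ can be reached.
\item Regional proximality supplies $x' \in U$, $y' \in V$, $s \in S$ with $s(x',y') \in \alpha$, so $s(\pi x', \pi y') \in \alpha_Y$.
\end{enumerate}
The difficulty is that $\beta$ controls images under $s$, whereas we know $s(\pi x',\pi y')$ lies in $\alpha_Y$ rather than knowing $(\pi x', \pi y')$ lies in $\alpha_Y$. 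We therefore need equicontinuity in the ``backward'' form: for every $\beta$ there is an $\alpha$ such that $s(a,b) \in \alpha$ implies $(a,b) \in \beta$. For compact systems this follows by a standard compactness argument applied to the closure of $S$ in $C(Y,Y)$. If that route is awkward for semigroups, we would instead first show that an equicontinuous factor of a minimal system is distal. By \cref{lemma_distal_implies_semisimple} and \cref{lemma_minimal_comm_action_implies_surjective} it then carries an invertible structure, and the backward estimate is immediate.

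\textbf{The inclusion $\seq \subseteq \RP$.} This is the substantial direction. It suffices to show two things: that $\RP$ is a closed, $S$-invariant equivalence relation, and that $X/\RP$ is equicontinuous. Then $\RP$ is one of the relations induced by equicontinuous factors, and $\seq \subseteq \RP$ follows.
\begin{enumerate}
\item Closedness of $\RP$ is immediate from the definition, since it is an intersection of conditions over neighborhoods. Invariance uses continuity of each $s$ and commutativity: if $s'(x',y') \in \alpha$, we need a witness for $(sx, sy)$. Since $s$ is surjective (\cref{lemma_minimal_comm_action_implies_surjective}), we can transport witnesses.
\item For equicontinuity of $X/R$ with $R$ a closed invariant equivalence relation containing $\RP$, we argue by contradiction. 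Suppose $X/R$ fails equicontinuity at some $\beta$. Nets $(a_i,b_i)$ converging to the diagonal and $s_i \in S$ with $s_i(a_i,b_i) \notin \beta$ produce, after passing to limits (using surjectivity to move the time onto the limit points), a regionally proximal pair outside $R$. This contradicts $\RP \subseteq R$.
\end{enumerate}

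\textbf{The main obstacle: transitivity of $\RP$.} Symmetry and reflexivity are trivial. To get transitivity, we first prove a ``one-sided'' characterization valid in minimal commutative systems: $(x,y) \in \RP$ if and only if for all open $V \ni y$ and $\alpha \supseteq \Delta_X$ there are $y' \in V$ and $s \in S$ with $s(x,y') \in \alpha$. That is, the perturbation of $x$ can be dropped. We would prove this in three steps.
\begin{enumerate}
\item Given witnesses $x', y', s$ for regional proximality, use uniform recurrence of $x$ to find $t$ in a syndetic set with $tx$ near $x'$ on the relevant finite data.
\item Absorb the finitely many translates from the syndeticity witness, using commutativity to reorder $st = ts$.
\item Apply \cref{lemma_uniform_near_diag} to convert nearness of points into membership in $\alpha$.
\end{enumerate}
With the one-sided form in hand, transitivity follows. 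Given $(x,y),(y,z) \in \RP$, we use a minimal idempotent ultrafilter $p$ and the Ellis-type action of $\beta S$ (as in \cref{lemma_distal_implies_semisimple}) to align the times witnessing the two pairs at the common point $y$. Composing the two approximations through $\alpha\circ\alpha \subseteq \beta$ then gives $(x,z) \in \RP$. We expect this one-sided characterization, and especially the bookkeeping replacing inverses available for group actions, to be where the real work lies; we would attempt it first via the syndetic return-time argument above.
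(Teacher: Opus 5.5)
The paper does not prove this statement; it imports it from the companion paper, so your outline has to stand on its own. The architecture is sound. The inclusion $\RP\subseteq\seq$ also goes through if you take the backward estimate from the paper's results. An equicontinuous factor $Y$ of $X$ is minimal, so by Arzel\`a--Ascoli, \cref{lemma_minimal_comm_action_implies_surjective} and \cref{thm_compact_abelian_grp_in_cxx}, the closure $\overline{S}$ of $S$ in $C(Y,Y)$ is a compact equicontinuous group. Choosing $\alpha$ with $\overline{S}\alpha\subseteq\beta$, the relation $s(a,b)\in\alpha$ then gives $(a,b)=s^{-1}s(a,b)\in\beta$. However, your claim that this direction needs neither minimality nor commutativity is false. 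For the action collapsing $X$ to a fixed point (the example following \cref{thm_equiv_char_of_bohr_zero_sets}), every pair is regionally proximal, yet the identity map is an equicontinuous factor.

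The genuine gaps are the two steps of $\seq\subseteq\RP$ that carry all the difficulty.

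\textbf{Equicontinuity of $X/R$.} Your limit argument produces the wrong kind of pair. Failure of equicontinuity gives $(a_i,b_i)\to(c,c)$ and $s_i(a_i,b_i)\to(u,v)$ with $u\neq v$: nearby points pushed apart. Regional proximality of $(u,v)$ asks for the opposite, namely points near $(u,v)$ pushed together by some $t$. For a group one takes $t=s_i^{-1}$. For a semigroup, surjectivity of $s_i$ only provides preimages with no control on where they lie, and a $t$ that approximately undoes $s_i$ at $s_ia_i$ and $s_ib_i$ simultaneously is exactly the uniformity you are trying to prove. Moreover, lifts of $a_i,b_i$ to $X$ are only close modulo $R$, not in $X$. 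So even for groups this step needs a separate lemma relating $\RP(X/R)$ to $(\pi\times\pi)\RP(X)$.

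\textbf{Transitivity.} Transitivity does not follow from the one-sided characterization by composing approximations.
\begin{enumerate}
\item From $(y,z)\in\RP$ you get $t_1$ and $z'\in V$ with $t_1y\approx t_1z'$.
\item From $(x,y)\in\RP$, using a neighbourhood of $y$ chosen after $t_1$, you get $t_2$ and $y'$ with $t_2x\approx t_2y'$ and $t_1y'\approx t_1z'$.
\item To reach $t_1t_2x\approx t_1t_2z'$ you must push $t_1y'\approx t_1z'$ forward by $t_2$. But $t_2$ was chosen after the precision of that approximation was fixed, and no equicontinuity is available.
\end{enumerate}
Invoking a minimal idempotent $p$ does not repair this, since $x\mapsto px$ is in general discontinuous and does not preserve approximate equalities.

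Your sketch of the one-sided lemma has the same defect. Minimality can bring $tx$ near $x'$, but you then need $y''\in V$ with $ty''$ near $y'$, which is a controlled inverse of $t$.

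This synchronization problem is the heart of the theorem; for minimal abelian group actions it is the content of Veech's theorem. It needs a genuinely stronger input, of the kind in \cref{thm_delta_set_builder_prestep}, where $tx$ and $tV$ meet an arbitrary prescribed open set along a syndetic set of times. As written, your proposal locates the difficulty but gives no argument for either step.
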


For our purposes in this paper, the precise definition of $\RP$ is not important; we will need only the combinatorial consequence from the following result.

\begin{theorem}[{\cite[Theorem 5.5]{blahodatna_detmold_glasscock_le_RP_2026}}]
\label{thm_delta_set_builder_prestep}
    Let $S$ be a commutative semigroup, $X$ be a minimal $S$-system, and $x, y \in X$.  If $(x,y) \in \RP$, then for all non-empty, open $V, W \subseteq X$ with $y \in V$, the set
    \[R(x,W) \cap R(V,W)\]
    is syndetic.
\end{theorem}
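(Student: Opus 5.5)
The natural route is to reduce syndeticity to a uniform finite-cover argument, after first upgrading the definition of $\RP$ so that the perturbed point $x'$ can be replaced by $x$ itself. Recall that $A\subseteq S$ is syndetic if there is a finite $F\subseteq S$ with $\bigcup_{f\in F}f^{-1}A=S$. So, given $t\in S$, I must produce $f\in F$ such that $ftx\in W$ and $ftV\cap W\neq\emptyset$, with $F$ independent of $t$.

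First, shrink $W$. Fix $w\in W$ and use \cref{lemma_uniform_near_diag} to obtain an open $W'\subseteq W$ containing $w$ and an open $\beta\supseteq\Delta$ such that $(a,b)\in\beta$ and $a\in W'$ imply $b\in W$. By minimality, $R(z,W')\neq\emptyset$ for every $z\in X$. Compactness then gives a finite $F\subseteq S$ with $X=\bigcup_{f\in F}f^{-1}W'$. Since $F$ is finite, uniform continuity yields an open $\alpha\supseteq\Delta$ with $f\alpha\subseteq\beta$ for all $f\in F$.

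Second, the key step: show that for $(x,y)\in\RP$ in a minimal commutative system, one may take $x'=x$. That is, for every open $V\ni y$ and open $\alpha\supseteq\Delta$, there are $y'\in V$ and $s\in S$ with $(sx,sy')\in\alpha$. I would try to derive this from \cref{mainthm_RP_is_eq_relation}. Since $\RP=\seq$ is a closed invariant equivalence relation with equicontinuous quotient $\pi:X\to X/\RP$, the fiber structure is controlled. For a given $\alpha$, choose $\alpha_0$ with $\alpha_0\circ\alpha_0\subseteq\alpha$. Apply the definition of $\RP$ to get $x'$, $y'$, $s$ with $(sx',sy')\in\alpha_0$. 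Then the task is to keep $(sx,sx')\in\alpha_0$ uniformly in $s$. This is automatic for $\pi$-images by equicontinuity of the factor. Upstairs, I would try a compactness/ultrafilter argument in the enveloping semigroup, using commutativity and the surjectivity from \cref{lemma_minimal_comm_action_implies_surjective}, to move from $\pi$-closeness to genuine closeness. I expect this upgrade to be the main obstacle, and I am not confident the step is closed as described.

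Third, handle an arbitrary $t$. By invariance of $\seq=\RP$, we have $(tx,ty)\in\RP$, but $tV$ need not be a neighborhood of $ty$. To get around this, I would apply the second step to the pair $(x,y)$ with the open set $V$ directly, but with $\alpha$ replaced by $t^{-1}$-pulled-back entourages. Alternatively, I would rephrase the goal as showing that the complement of $R(x,W)\cap R(V,W)$ contains no thick set, and use the second step along a net approximating the thick set. Once $s$ and $y'\in V$ with $(stx,sty')\in\alpha$ are available (commutativity lets me absorb $t$ into $s$), I pick $f\in F$ with $fstx\in W'$. The choice of $\alpha$ gives $(fstx,fsty')\in\beta$, hence $fsty'\in W$. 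Thus $fst\in R(x,W)\cap R(V,W)$, and, with the bookkeeping that makes the witness lie in the finite set $Ft$, this gives syndeticity.
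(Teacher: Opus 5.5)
\paragraph{The gap in the third step.} The conclusion of your third step does not follow. The element $s$ comes from regional proximality for the entourage $(t\times t)^{-1}\alpha$, so it depends on $t$ and is unbounded. What you actually get is $fst\in A\defeq R(x,W)\cap R(V,W)$ with $fs$ uncontrolled. That only shows that $A$ meets every ideal $tS$ (the squares in $(\N,+)$ do this). It does not give $ft\in A$ for $f$ in a fixed finite set, which is what syndeticity requires.

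What your first step really proves is $P_\alpha\subseteq\bigcup_{f\in F}f^{-1}A$, where $P_\alpha\defeq\{s\in S : (sx,sy')\in\alpha \text{ for some } y'\in V\}$. So you need $P_\alpha$ to be \emph{syndetic}. That is the theorem in disguise: if $W_0\times W_0\subseteq\alpha$, then $R(x,W_0)\cap R(V,W_0)\subseteq P_\alpha$. Your second step, however, only aims at $P_\alpha\neq\emptyset$.

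Testing against a thick set $H$, as in your alternative, is the right frame, but you need a mechanism that places the witness inside $H$. One that works:
\begin{enumerate}
\item Take an idempotent $u$ in a left ideal $L\subseteq\overline{H}$ of $\beta S$, as in the proof of \cref{lemma_thick_closure_contains_strong_ultrafilter}.
\item The factor $X/\RP$ is minimal equicontinuous, hence distal, so $u$ acts on it as the identity. Therefore $(ux,y)\in\RP$ by \cref{mainthm_RP_is_eq_relation}.
\item An $x'=x$ witness for the pair $(ux,y)$ gives $s$ and a non-empty open set of $v\in V$ with $(sux,sv)\in\alpha$.
\item Since $su=us$ for $s\in S$, the set $uX$ is invariant and hence dense, so you may choose such a $v$ with $uv=v$.
\item Choose $f\in F$ with $fsux\in W'$. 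Then $r\defeq fsu\in L$ satisfies $rx\in W$ and $rv=fsv\in W$, so $A\cap H\in r$, and in particular $A\cap H\neq\emptyset$.
\end{enumerate}

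\paragraph{The $x'=x$ upgrade.} That leaves the $x'=x$ property. Given \cref{mainthm_RP_is_eq_relation}, the argument above shows it is equivalent to the statement, and your sketch for it cannot work.
\begin{itemize}
\item Keeping $(sx,sx')\in\alpha_0$ for all $s$ once $x'$ is close to $x$ is exactly equicontinuity of $X$ at $x$. This fails in general; for instance, no point of an infinite minimal one-sided subshift is an equicontinuity point.
\item Even for the single relevant $s$ the quantifiers are against you. The element $s$ is produced together with $x'$ and changes as the neighborhood of $x$ shrinks.
\item The factor $X\to X/\RP$ cannot repair this. Closeness of $\pi(sx)$ and $\pi(sx')$ carries no information about closeness in $X$, as the pair $(x,y)$ itself, with $\pi x=\pi y$, shows.
\end{itemize}
The heart of the matter is that regional proximality can be witnessed with $x$ itself held fixed. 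That needs a genuinely different argument, and as written the proposal establishes neither it nor syndeticity.
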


\subsubsection{Compact subsemigroups of $C(X,X)$}

For $X$ a compact Hausdorff space, we denote by $C(X,X)$ the set of continuous functions $X \to X$ equipped with the topology of uniform convergence.  The main result in this subsection is \cref{thm_compact_abelian_grp_in_cxx}, that the closure of an equicontinuous subsemigroup of $C(X,X)$ that consists of surjections is a topological group.
This result is closely related to the well-known fact that the Ellis enveloping semigroup of a minimal system is a compact topological group if and only if the action is equicontinuous \cite[Thm. 3.34]{devries_book_1993}.
We need \cref{thm_compact_abelian_grp_in_cxx} to show the equivalence between the two natural definitions of \bohrzero{} subsets of a semigroup $S$ in \cref{sec_bohr_def}.

In what follows, a \emph{topological semigroup} is a semigroup together with a topology with respect to which multiplication $(s,t) \mapsto st$ is continuous.  A \emph{topological group} is a group together with a topology with respect to which multiplication $(g,h) \mapsto gh$ and inversion $g \mapsto g^{-1}$ are continuous.

\begin{lemma}
\label{thm_top_group_suff_condition}
\lean{groupFromCompactSubmonoidIsTopologicalGroup}
    Let $S$ be a compact subsemigroup of a Hausdorff topological semigroup $T$ with identity $e_T$.
    If $e_T \in S$ and $e_T$ is the unique idempotent of $S$, then $S$ is a compact Hausdorff topological group with identity $e_T$.
\end{lemma}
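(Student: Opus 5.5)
The plan is to show that $S$ is a group with identity $e_T$ and then that inversion is continuous; multiplication on $S$ is already continuous as a restriction of multiplication on $T$, and $S$ is Hausdorff as a subspace of $T$.

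\textbf{Step 1: $S$ is a group.} Fix $s \in S$ and consider $K = \overline{\{s^n : n \geq 1\}} \subseteq S$. This is a compact subsemigroup, since multiplication is continuous and $S$ is closed in the Hausdorff space $T$. More usefully, for each $n$ put $K_n = \overline{\{s^m : m \geq n\}}$. The $K_n$ form a decreasing family of non-empty compact sets, so $L = \bigcap_n K_n$ is non-empty. Moreover $L$ is a compact subsemigroup and an ideal of $K$, and $sL = L$.

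By the Ellis--Numakura lemma (every compact Hausdorff right-topological semigroup has an idempotent), $L$ contains an idempotent. By hypothesis this idempotent is $e_T$, so $e_T \in L$.

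Since $sL = L$, there is $t \in L$ with $st = e_T$. Elements of $K$ commute with $s$, because they are limits of powers of $s$ and multiplication is continuous. Hence $ts = e_T$ as well, and $t \in S$ is a two-sided inverse of $s$. Thus every element of $S$ has an inverse in $S$, and $S$ is a group with identity $e_T$.

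An alternative route to the same conclusion: $sS$ is a compact left ideal, so it contains a minimal left ideal, which contains an idempotent; that idempotent is $e_T$, so $e_T = s u$ for some $u \in S$. Symmetrically $Ss$ yields $v$ with $vs = e_T$. Then $u = v$, so $s$ is invertible. This avoids commutativity and uses only that left and right ideals in compact semigroups contain idempotents. I would adopt this cleaner route.

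\textbf{Step 2: inversion is continuous.} Let $s_\alpha \to s$ be a net in $S$. By compactness, any subnet of $(s_\alpha^{-1})$ has a further subnet converging to some $t \in S$. Along that subnet, continuity of multiplication gives $s_\alpha s_\alpha^{-1} \to st$, and since $s_\alpha s_\alpha^{-1} = e_T$ for every $\alpha$, we get $st = e_T$. Hence $t = s^{-1}$ by uniqueness of inverses in the group $S$. Because every subnet has a further subnet converging to $s^{-1}$, we conclude $s_\alpha^{-1} \to s^{-1}$. Equivalently, the graph of inversion is closed in the compact Hausdorff space $S \times S$, namely $\{(a,b) : ab = e_T\}$, and a map into a compact Hausdorff space with closed graph is continuous.

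The main obstacle is Step 1, specifically supplying the Ellis--Numakura idempotent lemma. If it is not available earlier in the paper, I would prove it via Zorn's lemma: take a minimal compact subsemigroup $A$ and $a \in A$. Then $Aa$ is a compact subsemigroup contained in $A$, so $Aa = A$ by minimality. Consequently $\{b \in A : ba = a\}$ is a non-empty compact subsemigroup, which again equals $A$ by minimality. In particular $aa = a$, so $a$ is an idempotent.
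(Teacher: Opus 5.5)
Your proof is correct and, in its adopted form, is essentially the paper's argument. In both, the unique idempotent $e_T$ is forced into $sS$ and $Ss$, the resulting one-sided inverses coincide, and continuity of inversion follows from the closed-graph argument the paper also suggests. The only difference is that the paper puts $e_T$ into $sS$ by noting $\overline{\{s^n : n \geq 2\}} \subseteq sS$ and applying Ellis--Numakura to that set.

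One small slip: $sS$ is a right ideal, not a left ideal, and a right ideal need not contain a minimal left ideal. This is harmless, since $sS \cdot sS \subseteq sS$ makes $sS$ itself a compact subsemigroup, so Ellis--Numakura places $e_T$ in it directly.
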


\begin{proof}
\leanok{}
    We will show that
    \begin{enumerate}
        \item every element of $S$ has a unique two-sided inverse in $S$, and
        \item the map $S \to S$ sending an element to its two-sided inverse in $S$ is continuous.
    \end{enumerate}
    This suffices to reach the conclusion of the lemma.  Indeed, by assumption, we have that $S$ is compact and Hausdorff. Since $S$ is a semigroup, the product of elements of $S$ is already in $S$.  Since every element has a two-sided inverse in $S$ and $S$ contains an identity $e_T$, it is a group with identity $e_T$.  Since $T$ is a topological semigroup, multiplication restricted to $S \times S$ is continuous.  Since the inverse map on $S$ is continuous, it is a compact Hausdorff topological group.

    To see 1., let $s \in S$.  Consider $A \defeq \overline{\{s^n \mid n \geq 2\}}$.  It is a non-empty, compact subsemigroup of $S$.
    By the Ellis-Numakura lemma (cf. \cite[Thm. 2.5]{hindman_strauss_book_2012}), the semigroup $A$ contains an idempotent.
    By assumption, the identity $e_T$ is the unique idempotent in $S$.
    Since $A \subseteq S$, we have that $e_T \in A$.
    We claim that $A \subseteq sS$.  Indeed, $S$ is compact and left multipliction by $s$ is continuous, so $sS$ is compact, hence closed.  Clearly $\{s^n \mid n \geq 2\} \subseteq sS$, so $A \subseteq sS$.
    Since $e_T \in A$, we have that $e_T \in sS$, meaning there exists $s_1 \in S$ such that $ss_1 = e_T$.

    By the same argument using right multiplication by $s$ instead of left, we get that there exists $s_2 \in S$ such that $s_2s = e_T$. From $ss_1 = s_2s = e_T$, we see $s_1 = s_2 s s_1 = s_2$, whence $s_1 = s_2$ is a two-sided inverse in $S$ for $s$. If $s_3 \in S$ is a two-sided inverse for $s$, then $s_1 = s_3 s s_1 = s_3$, whereby $s_1$ is the unique two-sided inverse for $s$ in $S$.

    Now 2. follows a well-known theorem of Ellis \cite{ellis_1957} that a group with a locally compact topology in which multiplication on the left and right is continuous is a topological group.  Alternatively, as an exercise, the closed graph theorem can be used to show 2. in a few lines.
\end{proof}

\begin{theorem}
\label{thm_compact_abelian_grp_in_cxx}
\lean{isTopologicalGroupFromSurjectiveSubsemiOfCXX}
\leanok{}
    Let $X$ be a compact Hausdorff space.  Let $S \subseteq C(X,X)$ be a commutative subsemigroup consisting of surjections of $X$.  If $\overline{S}$ is compact, then $\overline{S}$ is a compact Hausdorff abelian group with identity $\Id_X$.
\end{theorem}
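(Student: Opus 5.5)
The plan is to apply \cref{thm_top_group_suff_condition} with $T = C(X,X)$ under composition, which is a Hausdorff topological semigroup with identity $\Id_X$. The semigroup in that lemma will be $\overline{S}$. So we need four facts: $\overline{S}$ is a compact commutative subsemigroup of $C(X,X)$; $\Id_X \in \overline{S}$; $\Id_X$ is the only idempotent of $\overline{S}$; and every element of $\overline{S}$ is a surjection. Once these are established, the lemma gives that $\overline{S}$ is a compact Hausdorff topological group with identity $\Id_X$. Commutativity passes to $\overline{S}$ by continuity of composition, so the group is abelian.

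\textbf{Topological preliminaries.} First, composition is jointly continuous on $C(X,X)$ in the topology of uniform convergence; here we use the unique uniformity on $X$ and the uniform continuity of maps on compact $X$. Hausdorffness is clear. Consequently the closure of a commutative subsemigroup is again a commutative subsemigroup: $\overline{S}\,\overline{S} \subseteq \overline{SS} \subseteq \overline{S}$, and $fg = gf$ persists under limits. Next, surjectivity is preserved under uniform limits. If $f_i \to f$ uniformly with each $f_i$ onto and $y \in X$, pick $x_i$ with $f_i x_i = y$, and let $x$ be a cluster point of the $x_i$. Uniform convergence together with continuity of $f$ gives $f x = y$. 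Hence every element of $\overline{S}$ is surjective.

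\textbf{Idempotents.} Let $e \in \overline{S}$ be idempotent. Then $e$ is surjective, and for any $y$ we may write $y = ex$, so $ey = eex = ex = y$. Thus $e = \Id_X$. For the existence of an idempotent, the Ellis--Numakura lemma applied to the compact semigroup $\overline{S}$ (which is non-empty, with composition continuous in each variable) produces one. By uniqueness it equals $\Id_X$, so $\Id_X \in \overline{S}$.

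\textbf{Main obstacle.} The delicate points are the joint continuity of composition in $C(X,X)$ for non-metrizable compact Hausdorff $X$, and the net argument for closedness of surjections. I would handle both through the diagonal-neighborhood uniformity, as in \cref{lemma_uniform_near_diag}. The rest follows directly from \cref{thm_top_group_suff_condition}.
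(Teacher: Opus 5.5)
Your proposal is correct and follows the paper's proof essentially step for step. The closure $\overline{S}$ is a commutative subsemigroup by continuity of composition, it consists of surjections, a surjective idempotent must be $\Id_X$, and Ellis--Numakura together with \cref{thm_top_group_suff_condition} finishes; the only deviation is in showing that uniform limits of surjections are surjective. There you use a direct cluster-point argument, while the paper uses continuity of $f \mapsto fX$ into the hyperspace $2^X$ with the Vietoris topology, and both are valid.
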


\begin{proof}
\leanok{}
    Suppose $\overline{S}$ is compact. We will show that
    \begin{enumerate}
        \item $\overline{S}$ is a commutative subsemigroup of $C(X,X)$, and that

        \item if $h \in \overline{S}$ is idempotent, then $h = \Id_X$.
    \end{enumerate}
    This suffices to reach the conclusion of the theorem.  Indeed, we get from 1. that $\overline{S}$ is a compact subsemigroup of the Hausdorff topological semigroup $C(X,X)$, which has identity $\Id_X$.
    By the Ellis-Numakura lemma (cf. \cite[Thm. 2.5]{hindman_strauss_book_2012}), the set $\overline{S}$ contains an idempotent element $h$.
    From 2., we see that $h = \Id_X \in \overline{S}$ and $\Id_X$ is the unique idempotent in $\overline{S}$.
    It follows from \cref{thm_top_group_suff_condition} that $\overline{S}$ is a compact Hausdorff topological group with identity $\Id_X$.
    Since $\overline{S}$ was shown to be commutative, we have that, in fact, $\overline{S}$ is a compact Hausdorff abelian group.

    To see 1., denote the multiplication on $C(X,X)$ (that is, composition of maps) by $M: C(X,X)^2 \to C(X,X)$. Since $M$ is continuous and $M (S \times S) \subseteq S$ (since $S$ is a semigroup), we have that
    \[M (\overline{S} \times \overline{S}) = M \overline{(S \times S)} \subseteq \overline{ M(S \times S) } \subseteq \overline{ S }.\]
    Therefore, we conclude that $\overline{S}$ is a subsemigroup of $C(X,X)$.

    Let $\varphi: S \times S \to S \times S$ be $\varphi: (s,t) \mapsto (t,s)$, and define $\tilde M : C(X,X)^2 \to C(X,X)$ by $\tilde M = M \circ \varphi$.
    Since $\varphi$ and $M$ are continuous, so is $\tilde M$.
    We wish to show that $\overline{S}$ is commutative, that is, $\restr{M}{\overline{S} \times \overline{S}} = \restr{\tilde M}{\overline{S} \times \overline{S}}$.
    Since $S$ is a commutative semigroup, $\restr{M}{S \times S} = \restr{\tilde M}{S \times S}$.
    Since $S \times S$ is a dense subset of $\overline{S} \times \overline{S}$ and $\restr{M}{\overline{S} \times \overline{S}}$ and $\restr{\tilde M}{\overline{S} \times \overline{S}}$ are continuous functions, they are equal on $\overline{S} \times \overline{S}$, as desired.
    
    To see 2., we start by showing that $\overline{S}$ consists of surjections. Denote by $2^X$ the set of non-empty, compact subsets of $X$.
    Given the Vietoris topology, it is a compact Hausdorff space, and the map $\psi: C(X,X) \to 2^X$ given by $f \mapsto fX$ is continuous.  
    By assumption, $S$ consists of surjections, so $\restr{\psi}{S}$ is equal to the constant $X$ function on $S$.  Since $\psi$ and the constant $X$ function are continuous, and since $S$ is a dense subset of $\overline{S}$, we get that $\restr{\psi}{\overline{S}}$ is equal to the constant $X$ function on $\overline{S}$.
    Therefore, all maps in $\overline{S}$ consist of surjections.

    Suppose $h \in \overline{S}$ is idempotent.
    By the previous paragraph, the map $h$ is surjective.  We claim that $h = \Id_X$.
    Indeed, for all $x \in X$, there exists $y \in X$ such that $x = hy = h^2 y = hx$, as was to be shown.
\end{proof}

\subsubsection{A dynamical characterization of \bohrzero{} sets}
\label{sec_bohr_def}

Let $S$ be a (discrete) semigroup, and denote by $\T^d$ the $d$-dimensional torus $\R^d / \Z^d$. As defined in the introduction, a set $A \subseteq S$ is a \emph{\bohrzero{} set} if there exists $d \in \N$, a homomorphism $\varphi: S \to \T^d$, and an open set $U \subseteq \T^d$ containing $\vec 0$ such that $\varphi^{-1} U \subseteq A$.
The next theorem provides a useful equivalent dynamical characterization of \bohrzero{} sets for commutative semigroups, the group-action analogue of which is well-known. It also partially explains the notation $\dcsyndeticbohr$ for the family of \bohrzero{} sets: we define the related family of dynamically central syndetic sets, $\dcsyndetic$, in \cref{sec_char_of_central_star}.

\begin{theorem}
\label{thm_equiv_char_of_bohr_zero_sets}
\lean{bohrZeroSetsContainEquiReturns}
\lean{equiReturnsAreBohrZero}
\leanok{}
    Let $S$ be a (discrete) commutative semigroup, and let $A \subseteq S$.  The following are equivalent:
    \begin{enumerate}
        \item $A$ is \bohrzero{};

        \item \label{item_bohr_equi_action_semi}
        there exists a minimal, equicontinuous $S$-system $X$, a point $x \in X$, and an open set $U \subseteq X$ containing $x$ such that $R(x,U) \subseteq A$.
    \end{enumerate}
\end{theorem}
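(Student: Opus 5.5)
For (1) $\Rightarrow$ (2), I will take the torus itself as the system. Suppose $\varphi: S \to \T^d$ is a homomorphism and $U \ni \vec 0$ is open with $\varphi^{-1}U \subseteq A$. Let $K = \overline{\varphi(S)}$. This is a closed subsemigroup of the compact group $\T^d$; by the Ellis--Numakura argument used in \cref{thm_top_group_suff_condition}, it is a compact subgroup and so contains $\vec 0$.

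Let $S$ act on $K$ by translation, $s\cdot k = \varphi(s)+k$. This is an action by isometries, so it is equicontinuous; alternatively, view it as the restriction along $\varphi$ of the equicontinuous translation action of $K$ and apply \cref{lemma_hom_system_facts}. It is minimal: for any $k\in K$, the orbit closure of $k$ equals $\overline{\varphi(S)}+k = K+k = K$, since $K$ is a group. Now take $x=\vec 0$ and the open set $U\cap K$. Then $R(\vec 0, U\cap K) = \varphi^{-1}U \subseteq A$, which gives (2).

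For (2) $\Rightarrow$ (1), the plan is to embed the system into a compact abelian group and then use characters. Let $X$, $x$, and $U$ be as in (2), and consider the image $\{s: X\to X\}$ of $S$ in $C(X,X)$. Several inputs are already in hand:
\begin{itemize}
\item it is a commutative subsemigroup;
\item it consists of surjections, by \cref{lemma_minimal_comm_action_implies_surjective};
\item its closure $G$ is compact, since the maps are equicontinuous (Arzel\`a--Ascoli in the uniform setting).
\end{itemize}
By \cref{thm_compact_abelian_grp_in_cxx}, $G$ is then a compact Hausdorff abelian group with identity $\Id_X$, and $\theta: S\to G$, $s\mapsto s$, is a homomorphism.

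The evaluation map $ev: G\to X$, $g\mapsto gx$, is continuous. Hence $W = ev^{-1}(U)$ is an open neighborhood of $\Id_X$ in $G$, and $\theta^{-1}(W) = R(x,U)\subseteq A$. So it suffices to find $d\in\N$, a continuous homomorphism $\chi: G\to\T^d$, and an open $V\ni \vec 0$ with $\chi^{-1}V\subseteq W$; then $\varphi=\chi\circ\theta$ witnesses that $A$ is \bohrzero{}.

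I expect this last step to be the main obstacle. It is the standard Peter--Weyl/Pontryagin fact that characters separate points of a compact abelian group, upgraded to say that finitely many characters control a neighborhood of the identity. I would argue by compactness. The set $G\setminus W$ is compact, and for each $g$ in it there is a continuous character $\chi_g: G\to\T$ with $\chi_g(g)\neq 0$; this uses Peter--Weyl for compact abelian groups. So there is an open $O_g\ni \vec 0$ with $\chi_g(g)\notin \overline{O_g}$. Finitely many sets of the form $\{h : \chi_g(h)\notin \overline{O_g}\}$ cover $G\setminus W$. Let $\chi=(\chi_{g_1},\dots,\chi_{g_d})$ and $V=\prod O_{g_i}$. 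Any $h$ with $\chi(h)\in V$ then lies in no set of the cover, so $h\in W$, i.e. $\chi^{-1}V\subseteq W$. This is exactly the required conclusion.
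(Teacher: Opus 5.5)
Your proof is correct and follows essentially the paper's route. In (2)$\Rightarrow$(1) you use the same enveloping-group argument: Arzel\`a--Ascoli, \cref{thm_compact_abelian_grp_in_cxx}, evaluation at $x$, and then finitely many characters. You additionally supply the compactness-plus-Peter--Weyl argument for the neighborhood-basis fact, which the paper simply cites. In (1)$\Rightarrow$(2) your system $\overline{\varphi(S)}$ is exactly the paper's orbit closure of $\vec 0$. The only difference is that you get minimality from $\overline{\varphi(S)}$ being a compact subgroup (via Ellis--Numakura and \cref{thm_top_group_suff_condition}), whereas the paper uses distality and \cref{lemma_distal_implies_semisimple}.
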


\begin{proof}
\leanok{}
    1. implies 2. \ By assumption, there exists $d \in \N$, a homomorphism $\varphi: S \to \T^d$, and an open set $U \subseteq \T^d$ containing zero such that $\varphi^{-1} U \subseteq A$.
    As a (discrete) commutative semigroup, $\T^d$ acts via group addition equicontinuously on itself as a compact Hausdorff space.
    By \cref{lemma_hom_system_facts}, the homomorphism $\varphi: S \to \T^d$ makes the compact Hausdorff space $\T^d$ into an equicontinuous $S$-system (cf. \cref{sec_hom_systems}).
    
    Let $x = \vec 0$, $X$ be the $S$-orbit closure of $x$, and $V = X \cap U$.
    We claim that $X$ is a minimal, equicontinuous $S$-system, that $x \in X$, that $V$ is an open subset of $X$ containing $x$, and that $R(x,V) \subseteq A$.

    Since $S$ acts by isometries, the $S$-system $\T^d$ is distal.  By \cref{lemma_distal_implies_semisimple}, the point $x$ is $S$-uniformly recurrent.
    Therefore, $X$ is minimal, $x \in X$, and $V$ is an open subset of $X$ containing $x$.
    That $X$ is equicontinuous follows from the simple fact that subsystems of equicontinuous systems are equicontinuous.

    To see that $R(x,V) \subseteq A$, note first that $R(x,V) \subseteq R(x,U)$, where $R(x,V)$ is considered in the $S$-system $X$ and $R(x,U)$ is in the $S$-system $\T^d$.  
    Then $R(x,U) \subseteq \varphi^{-1} U$ since if $g \in R(x,U)$, then $\varphi(g) = \varphi(g) + \vec 0 = gx \in U$, meaning $g \in \varphi^{-1} U$. Finally, $\varphi^{-1} U \subseteq A$ by assumption.\\

    2. implies 1. \ Suppose there exists a minimal, equicontinuous $S$-system $X$, a point $x \in X$, and an open set $U \subseteq X$ containing $x$ such that $R(x,U) \subseteq A$. Let $i: S \to C(X,X)$ be the semigroup homomorphism sending an element of $S$ to its action map.
    Since $X$ is an equicontinuous $S$-system, the family of maps $iS \subseteq C(X,X)$ is equicontinuous.
    By the Arzela–Ascoli theorem (Dunford-Schwartz 1958, Section IV.6.7), the set $\overline{iS}$ is compact.
    By \cref{lemma_minimal_comm_action_implies_surjective}, since $S$ is commutative and acts minimally, it acts by surjections.  Thus, we see that $iS$ is a commutative subsemigroup of $C(X,X)$ comprised of surjections. By \cref{thm_compact_abelian_grp_in_cxx}, the set $H \defeq \overline{iS}$ is a compact Hausdorff abelian group with identity $\Id_X$.
    
    The map $\xi: H \to X$ defined by $\xi f = fx$ is continuous.  Define
    \[W = \xi^{-1} U = \{h \in H \mid h x \in U\}.\]
    Since $U$ is open, the set $W \subseteq H$ is open.  Since $x \in U$ and $\Id_X \in H$, we have that $\Id_X \in W$. And finally, if $s \in S$ and $is \in W$, then $sx \in U$, whereby $i^{-1} W \subseteq R(x,U)$.
    
    Since the sets obtained as inverse images of open subsets containing zero of finite-dimensional tori under continuous homomorphisms form a basis for the neighborhoods of the identity in any compact Hausdorff abelian group, we have that there exists $d \in \N$, a non-empty, open set $V \subseteq \T^d$ containing $\vec 0$, and a homomorphism $\psi: H \to \T^d$ such that $\psi^{-1} V \subseteq W$.

    Considering $i$ as a semigroup homomorphism $S \to H$, define $\varphi = \psi \circ i: S \to \T^d$. It is a homomorphism, and
    \[\varphi^{-1} V = i^{-1} \psi^{-1} V \subseteq i^{-1} W \subseteq R(x,U) \subseteq A,\]
    as was to be shown.
\end{proof}

It is interesting to note that the minimality assumption in the second point in \cref{thm_equiv_char_of_bohr_zero_sets} is necessary.
Indeed, the action of $S$ in which all elements of $S$ send all points of a space $X$ to a fixed point $x_0 \in X$ is equicontinuous.
Without the minimality assumption, the empty set would satisfy the statement in the second point.
Minimality could, however, be replaced by the weaker assumption that every $s \in S$ act surjectively (cf. \cref{lemma_minimal_comm_action_implies_surjective}).

\subsubsection{Families of difference sets and sets of Bohr recurrence}
\label{sec_diff_sets_and_bz_sets}

In this subsection, we collect some of the facts that we need on the families of $\Delta$ sets and \bohrzero{} sets and their duals.  While these families were defined in the second paragraph of the introduction, we offer some clarification and equivalent forms here.  We keep the notation $\Delta$, $\Delta^*$, $\dcsyndeticbohr$, and $\dcthickbohr$ from the introduction.  Throughout, let $S$ be a semigroup.

The family of $\Delta^*$ sets is defined as the collection of sets that have non-empty intersection with every $\Delta$ set.  It is tempting but incorrect to guess that $A \in \Delta^*$ if and only if for all $s_1, s_2, \ldots \in S$, there exists $i, j \in \N$ with $i < j$ such that $s_j \in s_i A$.  
While this condition suffices, it is not necessary.
Indeed, the whole semigroup $S = (\N \cup \{0\})^{\N}$ is a $\Delta^*$ subset of itself, but if $s_i$ is the indicator function of $i \in \N$, then for all $i, j \in \N$ with $i < j$, $s_j \not\in s_i S$.  Thus, we stay with the definition of the family $\Delta^*$ as the dual of $\Delta$ and do not offer an alternative characterization.

We do, however, find it worthwhile to introduce and treat a family of subsets of $S$ related to $\Delta$ sets called $\Delta_0$ sets.  A set $A \subseteq S$ is a \emph{$\Delta_0$-set} if for all $N \in \N$, there exists $s_1, \ldots, s_N \in S$ such that for all $1 \leq i < j \leq N$, $s_j \in s_iA$.  Thus, if $\Delta$ sets contain the differences of an infinite set, $\Delta_0$ sets contain the differences of arbitrarily large finite sets.  It is immediate from the definitions that $\Delta \subseteq \Delta_0$.

In the introduction, we define a set of Bohr recurrence to be any set that has non-empty intersection with every \bohrzero{} set, that is, $\dcthickbohr = \dcsyndeticbohr^*$.  The following lemma justifies calling these ``sets of Bohr recurrence,'' at least in commutative semigroups.

\begin{theorem}
    \label{lemma_dcsyndeticbohr_dcthickbohr_dual}
    \lean{setOfBohrRecurrenceImpliesTimeOfRecForMinEqui}
    \lean{timeOfRecurrenceForMinEquiImpliesSetOfBohrRecurrence}
    Let $S$ be a semigroup and $A \subseteq S$. 
    The following are equivalent:
    \begin{enumerate}
        \item $A \in \dcsyndeticbohr^*$;
        \item for all $d \in \N$, all semigroup homomorphisms $\varphi: S \to \T^d$, and all open $U \subseteq \T^d$ containing $\vec 0$, there exists $a \in A$ such that $\varphi a \in U$;
        \item if $S$ is commutative, then for all minimal, equicontinuous $S$-systems $X$, all $x \in X$, and all open $U \subseteq X$ containing $x$, there exists $a \in A$ such that $a x \in U$.
    \end{enumerate}
\end{theorem}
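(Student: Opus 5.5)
We show that 1 is equivalent to 2 for an arbitrary semigroup, and that 1 is equivalent to 3 when $S$ is commutative. Both equivalences come from unwinding the dual together with a "basis" description of $\dcsyndeticbohr$. The general principle is this. Suppose $\mathcal{B}$ is a collection of subsets of $S$ and $\family$ is its upward closure. Then $\family^* = \{A : A \cap B \neq \emptyset \text{ for all } B \in \mathcal{B}\}$. Indeed, a set meeting every member of $\mathcal{B}$ meets every superset of those members, and the converse holds because $\mathcal{B} \subseteq \family$.

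For 1 $\Leftrightarrow$ 2, we take $\mathcal{B}$ to consist of the sets $\varphi^{-1}U$, where $d \in \N$, $\varphi: S \to \T^d$ is a homomorphism, and $U \subseteq \T^d$ is open with $\vec 0 \in U$. By the definition in \cref{sec_bohr_def}, $\dcsyndeticbohr$ is exactly the upward closure of $\mathcal{B}$. The principle above then says that $A \in \dcsyndeticbohr^*$ if and only if $A \cap \varphi^{-1}U \neq \emptyset$ for every such triple $(d,\varphi,U)$. This is literally statement 2, since $a \in A \cap \varphi^{-1}U$ means exactly that $a \in A$ and $\varphi a \in U$. No commutativity is used in this step.

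For 1 $\Leftrightarrow$ 3, assume $S$ is commutative. Now we take $\mathcal{B}'$ to consist of the sets $R(x,U)$, where $X$ is a minimal equicontinuous $S$-system, $x \in X$, and $U \subseteq X$ is open with $x \in U$. By \cref{thm_equiv_char_of_bohr_zero_sets}, a set is \bohrzero{} if and only if it contains some member of $\mathcal{B}'$. Each member of $\mathcal{B}'$ is itself \bohrzero{}, because it contains itself. Hence $\dcsyndeticbohr$ is the upward closure of $\mathcal{B}'$. The same principle gives $A \in \dcsyndeticbohr^*$ if and only if $A \cap R(x,U) \neq \emptyset$ for all such $(X,x,U)$, that is, if and only if some $a \in A$ satisfies $ax \in U$. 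This is statement 3.

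There is no real obstacle in either part. The only substantive input is \cref{thm_equiv_char_of_bohr_zero_sets}, which we take as given. The point needing care is that statement 3 is conditional on commutativity. We therefore read the theorem as asserting the equivalence of 3 with 1 and 2 under that hypothesis, which is exactly where \cref{thm_equiv_char_of_bohr_zero_sets} applies.
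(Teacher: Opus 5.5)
Your proposal is correct and follows the paper's proof. The paper likewise gets 1 $\Leftrightarrow$ 2 directly from the definition of the dual and 1 $\Leftrightarrow$ 3 from \cref{thm_equiv_char_of_bohr_zero_sets}; your upward-closure principle just spells out the step the paper calls immediate. Your reading of statement 3 as an equivalence under the commutativity hypothesis is the intended one, since read literally it would hold vacuously for non-commutative $S$.
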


\begin{proof}
\leanok{}
    That the first and second are equivalent is immediate from the definition of the dual in \eqref{eqn_fam_dual_def}.
    That the first and third are equivalent follows immediately from \cref{thm_equiv_char_of_bohr_zero_sets}.
\end{proof}

\begin{lemma}
    \label{lemma_dcsyndeticbohr_is_filter}
    \lean{bohrZeroFamilyIsFilter}
    \leanok{}
    Let $S$ be a semigroup. The family $\dcsyndeticbohr$ is a filter.
\end{lemma}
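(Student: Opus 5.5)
We need to show that $\dcsyndeticbohr$ is a filter: it is non-empty, upward closed, and closed under pairwise intersection. We will work directly with the definition from the introduction, so that no commutativity assumption is needed and we avoid the dynamical characterization in \cref{thm_equiv_char_of_bohr_zero_sets}, which requires $S$ to be commutative.

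\emph{Non-emptiness and upward closure.} For non-emptiness, we take $d = 1$, the trivial homomorphism $\varphi \equiv \vec 0$, and $U = \T^1$. Then $\varphi^{-1} U = S$, so $S \in \dcsyndeticbohr$. Upward closure is immediate: if $\varphi^{-1} U \subseteq A \subseteq A'$, then the same data witnesses $A' \in \dcsyndeticbohr$.

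\emph{Closure under intersection.} Let $A_1, A_2 \in \dcsyndeticbohr$ be witnessed by homomorphisms $\varphi_i : S \to \T^{d_i}$ and open sets $U_i \ni \vec 0$ with $\varphi_i^{-1} U_i \subseteq A_i$. We form the product homomorphism
\[\varphi = (\varphi_1, \varphi_2) : S \to \T^{d_1} \times \T^{d_2} \cong \T^{d_1 + d_2}.\]
It is a semigroup homomorphism because each coordinate is one and addition in the product torus is coordinatewise. The set $U = U_1 \times U_2$ is open in $\T^{d_1+d_2}$ and contains $\vec 0$. Moreover,
\[\varphi^{-1} U = \varphi_1^{-1} U_1 \cap \varphi_2^{-1} U_2 \subseteq A_1 \cap A_2,\]
so $A_1 \cap A_2 \in \dcsyndeticbohr$.

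No step here is a real obstacle. The only point needing care is the identification of $\T^{d_1} \times \T^{d_2}$ with $\T^{d_1+d_2}$ as topological groups. This holds because $\R^{d_1+d_2}/\Z^{d_1+d_2}$ is canonically the product $(\R^{d_1}/\Z^{d_1}) \times (\R^{d_2}/\Z^{d_2})$, and under this identification $U_1 \times U_2$ is open.

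If the paper's convention for a filter also requires $\emptyset \notin \dcsyndeticbohr$, we note this too: each $\varphi^{-1} U$ contains every $s$ with $\varphi(s) = \vec 0$, but such $s$ may fail to exist. Instead, we argue that $\varphi^{-1} U$ is non-empty. Fix any $s \in S$; the closure of $\{\varphi(s^n) : n \geq 1\}$ is a compact subsemigroup of $\T^d$, hence a closed subgroup by the argument of \cref{thm_top_group_suff_condition}. It therefore contains $\vec 0$, so some power $s^n$ lies in $\varphi^{-1} U$.
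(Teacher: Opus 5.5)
Your proof is correct and matches the paper's: the constant zero homomorphism into $\T$ witnesses $S \in \dcsyndeticbohr$, and the product homomorphism $s \mapsto (\varphi_1(s), \varphi_2(s)) \in \T^{d_1+d_2}$ with the open set $U_1 \times U_2$ handles intersections. Your final paragraph is not needed, since the paper's convention explicitly allows a filter to contain $\emptyset$, though your argument that $\varphi^{-1} U \neq \emptyset$ is valid.
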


\begin{proof}
\leanok{}
    Note that $S$ is always a \bohrzero{} subset of itself by choosing in the definition the constant zero homomorphism from $S$ into $\T$.  Thus, $\dcsyndeticbohr$ is a non-empty family.
    To see that it is a filter, we must show that if $A, B \in \dcsyndeticbohr$, then $A \cap B \in \dcsyndeticbohr$.  Let $A, B \in \dcsyndeticbohr$.  There exist $k, \ell \in \N$, homomorphisms $\varphi: S \to \T^k$ and $\psi: S \to \T^\ell$, and open sets $U \subseteq \T^k$ and $V \subseteq \T^\ell$ containing $\vec 0$ such that $A \supseteq \varphi^{-1} U$ and $B \supseteq \psi^{-1} V$.
    
    The map $\varphi \otimes \psi: s \mapsto (\varphi(s), \psi(s)) \in \T^{k+\ell}$ is a homomorphism from $S$ into $\T^{k + \ell}$, and $U \times V \subseteq \T^{k + \ell}$ is an open set containing $\vec 0$.  Moreover, we see that
    \[A \cap B \supseteq \varphi^{-1} U \cap \psi^{-1} V = (\varphi \otimes \psi)^{-1} (U \times V),\]
    demonstrating that $A \cap B \in \dcsyndeticbohr$, as desired.
\end{proof}

The proof of the following lemma makes use of a future result, the well-known duality between filters and partition regular families, \cref{thm_family_pr_iff_dual_is_filter}.
The careful reader will note that that \cref{thm_family_pr_iff_dual_is_filter} does not depend in any way on \cref{lemma_dcthickbohr_is_pr}. 

\begin{lemma}
    \label{lemma_dcthickbohr_is_pr}
    \lean{setOfBohrRecurrenceFamilyIsPR}
    \leanok{}
    Let $S$ be a semigroup. The family $\dcthickbohr$ is partition regular.
\end{lemma}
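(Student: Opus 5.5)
The plan is to deduce this from \cref{lemma_dcsyndeticbohr_is_filter} through the filter/partition regular duality. The paper has flagged that it will invoke \cref{thm_family_pr_iff_dual_is_filter}, which says a Furstenberg family is a filter exactly when its dual is partition regular. The argument then has three short steps.

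First, I note that $\dcsyndeticbohr$ is upward closed, and hence a Furstenberg family. Indeed, if $A \supseteq \varphi^{-1}U$ and $A \subseteq A'$, then $A' \supseteq \varphi^{-1}U$ with the same homomorphism $\varphi$ and neighborhood $U$.

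Second, by \cref{lemma_dcsyndeticbohr_is_filter}, the family $\dcsyndeticbohr$ is a filter. By the definition given in the introduction, $\dcthickbohr = \dcsyndeticbohr^*$. Applying \cref{thm_family_pr_iff_dual_is_filter} to $\dcsyndeticbohr$, its dual $\dcthickbohr$ is partition regular. One could also first note $\dcthickbohr^* = \dcsyndeticbohr$, since the dual is an involution on Furstenberg families, and then apply the theorem in the form ``$\family$ is partition regular iff $\family^*$ is a filter.'' Either direction of the duality suffices.

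If I wanted to avoid the future theorem, I would instead argue directly, in three checks.
\begin{itemize}
\item $\dcthickbohr \neq \powerset(S)$, because $\emptyset \notin \dcthickbohr$: the set $S$ is itself \bohrzero{}, and $\emptyset \cap S = \emptyset$.
\item Upward closure of $\dcthickbohr$ is immediate.
\item For partition regularity, suppose $A \cup B \in \dcthickbohr$ but $A, B \notin \dcthickbohr$. Then there are \bohrzero{} sets $C, D$ with $A \cap C = \emptyset$ and $B \cap D = \emptyset$. The set $C \cap D$ is \bohrzero{} by the filter property, and it misses $A \cup B$, which is a contradiction.
\end{itemize}

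There is no real obstacle. The only point needing care is the convention that partition regular families exclude $\powerset(S)$, which is exactly where $S \in \dcsyndeticbohr$ is used.
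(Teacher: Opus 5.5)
Your proposal is correct and matches the paper's proof, which deduces partition regularity of $\dcthickbohr = \dcsyndeticbohr^*$ directly from \cref{lemma_dcsyndeticbohr_is_filter} and the filter--partition regularity duality of \cref{thm_family_pr_iff_dual_is_filter}; you are also right that the involution $\dcthickbohr^* = \dcsyndeticbohr$ is what lets that theorem be applied in its stated form. Your optional direct argument is also correct and simply unpacks that duality in this special case.
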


\begin{proof}
\leanok{}
    That $\dcthickbohr$ is partition regular follows immediately from \cref{lemma_dcsyndeticbohr_is_filter} and the filter--partition regularity duality in \cref{thm_family_pr_iff_dual_is_filter}.
\end{proof}

\begin{theorem}
    \label{thm_delta_zero_set_of_bohr_rec}
    \label{deltaZeroImpliesSetOfBohrRecurrence}
    \leanok{}
    Let $S$ be a semigroup.  Then $\Delta_0 \subseteq \dcthickbohr$.
\end{theorem}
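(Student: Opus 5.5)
To show $\Delta_0 \subseteq \dcthickbohr$, I will use the second characterization in \cref{lemma_dcsyndeticbohr_dual}, which needs no commutativity. Fix $A \in \Delta_0$, $d \in \N$, a homomorphism $\varphi: S \to \T^d$, and an open $U \subseteq \T^d$ containing $\vec 0$. The goal is to produce $a \in A$ with $\varphi a \in U$. I will follow the pigeonhole argument from the introduction for the integers, made finitary because $\Delta_0$ sets only supply arbitrarily long finite difference patterns.

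First I choose $\eps > 0$ so that the open ball $B(\vec 0, \eps)$, in the translation-invariant metric on $\T^d$, lies in $U$. By total boundedness, $\T^d$ is covered by finitely many sets of diameter less than $\eps$, say $N - 1$ of them. Since $A \in \Delta_0$, there are $s_1, \ldots, s_N \in S$ with $s_j \in s_i A$ for all $1 \le i < j \le N$. By pigeonhole, two of the points $\varphi s_1, \ldots, \varphi s_N$ lie in the same piece of the cover; call them $\varphi s_i$ and $\varphi s_j$ with $i < j$.

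Next I write $s_j = s_i a$ for some $a \in A$. Because $\varphi$ is a homomorphism into the additive group $\T^d$, this gives $\varphi s_j = \varphi s_i + \varphi a$. Hence $\varphi a = \varphi s_j - \varphi s_i$. By translation invariance of the metric, $\operatorname{dist}(\varphi a, \vec 0) = \operatorname{dist}(\varphi s_j, \varphi s_i) < \eps$. Therefore $\varphi a \in U$, which is what we needed.

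The only point needing care is that the semigroup need not be commutative or have inverses. This causes no trouble: the identity $s_j = s_i a$ is read in $S$, and subtraction only takes place in the group $\T^d$. The ordering convention $s_j \in s_i A$ matches the left-multiplication form of the homomorphism equation. I expect no step to be a real obstacle; the argument is essentially the introduction's sketch with the pigeonhole made finite.
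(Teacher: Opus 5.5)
Your proof is correct and is essentially the paper's argument: cover $\T^d$ by finitely many small pieces, take one more term of a $\Delta_0$ pattern than there are pieces, and combine pigeonhole with the homomorphism identity $\varphi a = \varphi s_j - \varphi s_i$ and translation invariance of the metric. The only difference is cosmetic: you use pieces of diameter strictly less than $\eps$, which handles the strict inequality a little more cleanly than the paper's ``at most $\eps$'' phrasing.
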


\begin{proof}
\leanok{}
    Let $A \in \Delta_0$ and $B \in \dcsyndetic$.  We must show that $A \cap B \neq \emptyset$.
    There exists $d \in \N$, a semigroup homomorphism $\varphi: S \to \T^d$, and an open set $U \subseteq \T^d$ containing $\vec 0$ such that $\varphi^{-1} U \subseteq B$.  It suffices to show $A \cap \varphi^{-1} U \neq \emptyset$; we will show, equivalently, that $\varphi A \cap U \neq \emptyset$.

    Let $\eps > 0$ be such that the Euclidean ball of radius $\eps$ centered at $\vec 0$ is contained in $U$.  Since $\T^d$ is totally bounded, there exists $N \in \N$ and Euclidean balls $B_1, \ldots, B_N \subseteq \T^d$ of diameter $\eps$ such that $\T^d = \cup_{i=1}^N B_i$.

    Since $A \in \Delta_0$, there exists $s_1, \ldots, s_{N+1} \in S$ such that for all $1 \leq i < j \leq N + 1$, $s_j \in s_i A$.  It follows that for all $1 \leq i < j \leq N + 1$, $\varphi s_j \in \varphi s_i + \varphi A$, that is, $\varphi s_j - \varphi s_i \in \varphi A$
    
    By the pigeonhole principle, there exists $1 \leq i < j \leq N + 1$ and $1 \leq k \leq N$ such that $\varphi s_i, \varphi s_j \in B_k$.  Since $B_k$ has diameter $\eps$, the distance between $\varphi s_j - \varphi s_i$ and $\vec 0$ is at most $\eps$. Therefore, $\varphi s_j - \varphi s_i \in U$. Since $\varphi s_j - \varphi s_i \in \varphi A$, we have demonstrated that $\varphi A \cap U \neq \emptyset$, as was to be shown.
\end{proof}

\subsection{Ultrafilters and dynamics}
\label{sec_ultrafilters}

Ultrafilters play a useful, though perhaps not entirely essential, role in this paper. In this subsection, we summarize just what we will need; the curious reader is directed to \cite{hindman_strauss_book_2012} for a comprehensive account.  As always, $S$ is a discrete semigroup.

We begin with a clarification on thick sets. We defined the family of thick sets, $\thick$, as $\syndetic^*$, where the dual star notation is defined in \eqref{eqn_fam_dual_def}.  It is more customary to define a set $A \subseteq S$ to be \emph{thick} if for all finite $F \subseteq S$, there exists $s \in S$ such that $Fs \subseteq A$.  It is a short exercise to show that these two definitions of thick agree.

\begin{lemma}
    \label{lemma_thick_inverse_shift_intersections}
    \label{lemma_preimage_intersection_of_thick_is_thick}
    \lean{inverseDilateCapOfThickIsThick}
    \label{lemma_thick_right_shift_is_thick}
    \lean{rightTransOfThickIsThick}
    \leanok{}
    Let $S$ be a semigroup and $H \subseteq S$ be thick.
    \begin{enumerate}
        \item For all finite $K \subseteq S$, the set $\cap_{k \in K} k^{-1} H$ is thick.

        \item For all $k \in S$, the set $Hk$ is thick.
    \end{enumerate}
\end{lemma}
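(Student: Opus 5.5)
We will work throughout with the customary characterization of thickness recalled just before the statement: a set $H \subseteq S$ is thick if and only if for every finite $F \subseteq S$ there exists $s \in S$ with $Fs \subseteq H$. The text notes that this agrees with the definition $\thick = \syndetic^*$, and we take that equivalence as given. With this characterization, both parts reduce to choosing the right finite set and the right witness; no dynamics or family algebra is needed.

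For part 1, fix a finite $K \subseteq S$ and put $H_K = \cap_{k \in K} k^{-1} H$. Let $F \subseteq S$ be finite. The set $KF = \{kf \mid k \in K, f \in F\}$ is finite, so thickness of $H$ gives $s \in S$ with $KFs \subseteq H$. Then for every $k \in K$ and $f \in F$ we have $k(fs) \in H$, that is, $fs \in k^{-1}H$. Hence $Fs \subseteq H_K$, and $H_K$ is thick.

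For part 2, fix $k \in S$ and a finite $F \subseteq S$. Thickness of $H$ gives $s \in S$ with $Fs \subseteq H$. Right multiplying by $k$ and using associativity, $F(sk) = (Fs)k \subseteq Hk$. Thus $s' = sk$ witnesses the thickness condition for $F$ in $Hk$, so $Hk$ is thick.

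The only point needing care is the orientation of the multiplication, since $S$ is not assumed commutative. Thickness uses right translates $Fs$, preimages $k^{-1}H$ are taken with respect to left multiplication, and the translate in part 2 is on the right. These conventions fit together exactly as in the two computations above. Beyond that, the main obstacle is the equivalence between the two definitions of thickness, which we are taking from the text rather than reproving.
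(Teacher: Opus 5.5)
Your proposal is correct and matches the paper's proof essentially line for line: for part 1 you use finiteness of $KF$ to get $s$ with $KFs \subseteq H$ and conclude $Fs \subseteq \cap_{k \in K} k^{-1}H$, and for part 2 you right-multiply $Fs \subseteq H$ by $k$. Your orientation conventions ($k^{-1}H = \{x : kx \in H\}$, right translates $Fs$) also agree with the paper's.
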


\begin{proof}
\leanok{}
    1. Let $K \subseteq S$ be finite.  To see that $\cap_{k \in K} k^{-1} H$ is thick, let $F \subseteq S$ be finite.  The set $KF$ is finite, so there is $s \in S$ such that $KFs \subseteq H$. We see $Fs \subseteq \cap_{k \in K} k^{-1} H$, as desired.

    2. Let $k \in S$. Let $F \subseteq S$ be finite.  There exists $s \in S$ such that $Fs \subseteq H$.  Then $Fsk \subseteq Hk$.
\end{proof}

Recall from the introduction that an ultrafilter on $S$ is a family that is both partition regular and a filter.  We denote by $\beta S$ the collection of all ultrafilters on $S$.
The collection of sets of the form $\overline{A} \defeq \{p \in \beta S \mid A \in p\}$, where $A \subseteq S$, yields a clopen basis for a compact Hausdorff topology on $\beta S$.
The principal ultrafilter at $s \in S$ -- that is, the family of sets that contain $s$ -- is the image of the injection of $S$ into $\beta S$.
Thus $\overline{S} = \beta S$ is the Stone-\Cech{} compactification of $S$.

The semigroup operation $S \times S \to S$ extends to a operation $\beta S \times \beta S \to \beta S$ defined for $p, q \in \beta S$ by
\begin{align}
\label{eqn_ultra_mult}
    pq = \big\{ A \subseteq S \mid \big\{ s \in S \mid \{t \in S \mid st \in A \} \in q \big\} \in p \big\},
\end{align}
making $\beta S$ into a compact Hausdorff right-topological semigroup; see \cite[Ch. 4]{hindman_strauss_book_2012}. An ultrafilter $p \in \beta S$ is called \emph{idempotent} if $pp = p$ and \emph{minimal} if it belongs to a left ideal of $\beta S$ that is minimal among all left ideals of $\beta S$.  A \emph{minimal idempotent} is an idempotent ultrafilter that is minimal; such objects exist (see \cite[Cor. 2.6 and Thm. 2.9]{hindman_strauss_book_2012}) and are extraordinarily useful, as evidenced already in the proof of \cref{lemma_distal_implies_semisimple}.

Let $X$ be an $S$-system, $x \in X$, and $p, q \in \beta S$.
We denote by $px$ the ultrafilter limit of the pushforward of $p$ through the map $s \mapsto sx$.
This yields an action of $\beta S$ on $X$: $p(qx) = (pq)x$ (cf. \cite[Thm. 4.1 and Rmk. 4.3]{hindman_strauss_book_2012}).
If $p$ is idempotent, then $px$ is proximal to $x$; if $p$ is minimal, then $px$ is $S$-uniformly recurrent (cf. \cite[Thm. 19.26]{hindman_strauss_book_2012}).
It is quick to check from the definitions that if $U \subseteq X$ is open and $px \in U$, then $R(x,U) \in p$; as a partial converse, it is easy to see that if $R(x,U) \in p$, then $px \in \overline{U}$.

\begin{lemma}
    \label{lemma_thick_closure_contains_strong_ultrafilter}
    \lean{minIdempotentWitnessesShiftIntersectionLargeness}
    \leanok{}
    Let $S$ be a semigroup and $H \subseteq S$ be thick. There exists a minimal idempotent ultrafilter $p \in \overline{H}$ such that for all finite $F \subseteq S$, the set $\cap_{f \in F} f^{-1} H \in p$.
\end{lemma}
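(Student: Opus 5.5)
The plan is to pass to the compact right-topological semigroup $\beta S$ and find a minimal idempotent inside a suitable closed subsemigroup. Let $\mathcal{K}$ be the collection of sets of the form $\cap_{f \in F} f^{-1} H$ with $F \subseteq S$ finite. Then consider
\[T \defeq \bigcap_{F \subseteq S \text{ finite}} \overline{\textstyle\bigcap_{f \in F} f^{-1} H} \subseteq \beta S.\]
By \cref{lemma_thick_inverse_shift_intersections}, each set in $\mathcal{K}$ is thick, hence non-empty. The family $\mathcal{K}$ is closed under finite intersections, since $\cap_{f\in F}f^{-1}H \cap \cap_{g\in G}g^{-1}H = \cap_{f \in F\cup G} f^{-1}H$. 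So the clopen sets $\overline{K}$, $K \in \mathcal{K}$, have the finite intersection property, and by compactness $T$ is a non-empty closed subset of $\beta S$. Any $p \in T$ contains every $\cap_{f \in F} f^{-1}H$. Taking $F$ a singleton does not immediately give $H \in p$, so I will also intersect $T$ with $\overline{H}$; the family $\mathcal{K} \cup \{H\}$ still has the finite intersection property, because $H \cap \cap_{f\in F}f^{-1}H$ contains $Fs$-type translates: thickness gives $s$ with $(\{e\}\cup F)$-style containment, or more simply $(F \cup F^2)s \subseteq H$ handled as in the lemma's proof.

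Next I will show that $T$ is a left ideal of $\beta S$, or at least that $\beta S\, T \subseteq T$. Let $q \in \beta S$, $p \in T$, and $F$ finite. By \eqref{eqn_ultra_mult}, $K \defeq \cap_{f\in F} f^{-1}H \in qp$ iff $\{s : s^{-1}K \in p\} \in q$. But $s^{-1}K = \cap_{f \in F} (fs)^{-1}H \in \mathcal{K}$, which lies in $p$ for every $s$. So the set in question is all of $S$, and $K \in qp$. Thus $T$ is a closed left ideal, with the caveat that the version intersected with $\overline{H}$ must be treated separately. To avoid that issue I will note that $H \in qp$ follows the same way: $\{s : s^{-1}H \in p\} = S$. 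Hence $T \cap \overline{H} = T$ automatically, and I can drop the extra intersection entirely.

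A closed left ideal of $\beta S$ contains a minimal left ideal $L$, for instance $\beta S\, p$ contains one by Zorn's lemma/standard theory \cite{hindman_strauss_book_2012}. A minimal left ideal is a compact subsemigroup, so by Ellis--Numakura it contains an idempotent, which is then a minimal idempotent $p \in L \subseteq T$. This $p$ has all the required properties. The main obstacle is the left-ideal computation, and specifically getting the direction of the product in \eqref{eqn_ultra_mult} right, so that shifts land as $(fs)^{-1}H$ rather than $(sf)^{-1}H$. If the direction turned out wrong, I would instead show that $T$ is a right ideal, or use thickness in the form $Fs \subseteq H$, which matches right translation.
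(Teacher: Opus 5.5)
Your overall strategy is the paper's. The paper sets $L=\overline{H}\cap\bigcap_{s\in S}\overline{s^{-1}H}$, gets non-emptiness from the finite intersection property, checks that $L$ is a left ideal, and takes a minimal idempotent in it. Your $T$ is the same set without the factor $\overline{H}$. Your left-ideal computation is correct, including the direction $s^{-1}\bigl(\bigcap_{f\in F}f^{-1}H\bigr)=\bigcap_{f\in F}(fs)^{-1}H$.

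However, the step ``Hence $T\cap\overline{H}=T$ automatically'' is false. What your computation shows is that $H\in qp$ for all $q\in\beta S$ and $p\in T$, that is, $\beta S\,T\subseteq\overline{H}$. It says nothing about $H\in p$ itself unless $S$ has an identity $e$ (then $H=e^{-1}H\in p$), and the paper does not assume one. Concretely, take $(\mathbb{N},+)$ with $\mathbb{N}=\{1,2,\dots\}$ and $H=\bigcup_n[a_n,a_n+n]$, where $a_1\ge 2$ and $a_{n+1}>a_n+n+1$. For $n\ge k$ the point $a_n-1$ lies in $(\mathbb{N}\setminus H)\cap\bigcap_{s=1}^{k}s^{-1}H$. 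So $\{\mathbb{N}\setminus H\}\cup\{s^{-1}H : s\in\mathbb{N}\}$ has the finite intersection property, and some ultrafilter in $T$ does not contain $H$.

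The fix is already within reach.
\begin{itemize}
\item A minimal left ideal $L\subseteq T$ satisfies $L=\beta S\,q$ for every $q\in L$, since $\beta S\,q\subseteq L$ is itself a left ideal. Hence $L\subseteq\beta S\,T\subseteq\overline{H}$, and the minimal idempotent you find lies in $\overline{H}$.
\item Alternatively, keep $\overline{H}$ in the intersection, as the paper does. Your computation shows $T\cap\overline{H}$ is also a left ideal. Its non-emptiness follows from your $(F\cup F^2)s$ remark: choose $s$ with $(F\cup FF)s\subseteq H$ and use $f_0s$ for some $f_0\in F$. Drop the ``$\{e\}\cup F$'' variant, since it presupposes an identity.
\end{itemize}
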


\begin{proof}
\leanok{}
    The set
    \[\big\{ \overline{H} \cap \bigcap_{s \in F} \overline{s^{-1} H} \mid \text{finite } F \subseteq S \big\}\]
    has the finite intersection property, whence
    \[L \defeq \overline{H} \cap \bigcap_{s \in S} \overline{s^{-1}H}\]
    is non-empty. It is quick to check that $L$ is a left ideal of $\beta S$.
    By \cite[Cor. 2.6 and Thm. 2.9]{hindman_strauss_book_2012}, there exists a minimal idempotent ultrafilter $p \in L \subseteq \overline{H}$.
    The stated property holds since for $F \subseteq S$ finite,
    \[p \in L \subseteq \bigcap_{f \in F} \overline{f^{-1} H}\]
    implies that $\cap_{f \in F} f^{-1} H \in p$, as desired.
\end{proof}

\section{Furstenberg family algebra}
\label{sec_algebra_structure_of_ff}

Let $S$ be a set and $\powerset(S)$ denote its powerset.  A collection $\family$ of subsets of $S$ is \emph{upward closed} if for all $A \in \family$ and all $B \subseteq S$ containing $A$, we have that $B \in \family$.  As defined in the introduction, a \emph{(Furstenberg) family} is an upward-closed collection of subsets of $S$.  We denote the collection of families by $\families(S) \subseteq \powerset(\powerset(S))$.

Throughout this section, we use the symbols $\family$, $\familytwo$, and $\familythree$ to denote families, ie. members of $\families(S)$.
We note that the collection of no sets $\emptyset$ and the collection of all sets $\powerset(S)$ are families.
This choice was made to complete the algebraic picture: many instances of the operations described below naturally yield $\emptyset$ or $\powerset(S)$.

\subsection{Family dual}

The \emph{family dual} map $*: \mathcal{P}(\mathcal{P}(S)) \to \mathcal{P}(\mathcal{P}(S))$ is defined by 
\[\family^* = \big\{A \subseteq S : \forall B \in \family, \ A \cap B \neq \emptyset \big\}.\]
We call $\family^*$ the \emph{dual} of $\family$.
As discussed in the introduction, this operation is well-known.  We summarize its main properties in the following theorem.

\begin{theorem}
    \label{thm_dual_main_properties}
    \lean{Family.famDual} \leanok{}
    \lean{dualIsInvolutionOnFamilies} \leanok{}
    \lean{dualIsAntitone} \leanok{}
    \lean{famDualAlt} \leanok{}
    The family dual, restricted to $\families(S)$,
    \begin{enumerate}
        \item is a unary operation on $\families(S)$: $\family^* \in \families(S)$;
        
        \item is an involution of $\families(S)$: $(\family^*)^* = \family$;
        
        \item is antitone: if $\family \subseteq \familytwo$, then $\familytwo^* \subseteq \family^*$;

        \item is described equivalently by:
    \[\family^* = \big\{A \subseteq S : S \setminus A \not\in \family\big\}.\]
    \end{enumerate}
\end{theorem}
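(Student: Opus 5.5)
I would prove the four items out of order, beginning with item 4, because it converts the dual into a complement condition that is easy to work with. After that, items 1 through 3 follow quickly.

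For item 4, let $\family$ be upward closed. Suppose first that $A \in \family^*$. If $S \setminus A$ were in $\family$, then $A \cap (S \setminus A) = \emptyset$ would contradict the definition of the dual. Hence $S \setminus A \notin \family$. Conversely, suppose $S \setminus A \notin \family$, and let $B \in \family$. If $A \cap B = \emptyset$, then $B \subseteq S \setminus A$, so upward closure gives $S \setminus A \in \family$, a contradiction. Therefore $A \cap B \neq \emptyset$ for every $B \in \family$, which means $A \in \family^*$. This is the only place where upward closure of $\family$ is used.

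Item 1 can be read directly off the definition. If $A \in \family^*$ and $A \subseteq A'$, then every nonempty $A \cap B$ sits inside $A' \cap B$, so $A' \in \family^*$. This argument does not even need $\family$ to be a family.

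Item 3 is immediate as well. If $\family \subseteq \familytwo$ and $A$ meets every member of $\familytwo$, then in particular $A$ meets every member of $\family$.

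For item 2, I apply item 4 twice. This is legitimate because $\family^*$ is a family by item 1. For any $A \subseteq S$,
\[
A \in (\family^*)^* \iff S\setminus A \notin \family^* \iff S \setminus (S \setminus A) \in \family \iff A \in \family.
\]
The only points needing care are the degenerate families $\emptyset$ and $\powerset(S)$, which the equivalence handles automatically: $\emptyset^* = \powerset(S)$ and $\powerset(S)^* = \emptyset$, the latter because the empty set is in $\powerset(S)$.

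I expect no real obstacle in any of this. The one subtle point is doing item 4 before item 2, and noting that the involution property genuinely depends on upward closure. For an arbitrary element of $\powerset(\powerset(S))$, the double dual is its upward closure rather than the collection itself.
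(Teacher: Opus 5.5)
Your proof is correct and matches the paper's intent: the paper gives no argument of its own, only calling all four items simple exercises from the definitions and citing Akin's treatment, and your direct verification (complement description first, then the involution via item 1) is exactly such an exercise. Your closing remark is also accurate: upward closure is what makes the dual an involution, since for an arbitrary collection the double dual is its upward closure.
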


\begin{proof}
All four statements are simple exercises from the definitions and can be found in \cite[Ch. 2]{akin_1997}.
\end{proof}

It follows formally from the definition but is worth making explicit: the families $\emptyset$ and $\powerset(S)$ are dual: $\emptyset^* = \powerset(S)$ and $\powerset(S)^* = \emptyset$.

\subsection{Family meet}
\label{sec_family_meet}

The \emph{family meet} map $\classcapdual: \mathcal{P}(\mathcal{P}(S))^2 \to \mathcal{P}(\mathcal{P}(S))$ is defined by
\[\family \classcapdual \familytwo = \big\{A \subseteq S : \forall B \in \family^*, \ A \cap B \in \familytwo \big\}.\]
We call $\family \classcapdual \familytwo$ the \emph{family meet} of $\family$ and $\familytwo$.
The following theorem summarizes the basic properties of family meet.  Of particular note is the fact that the operation is commutative.  While not difficult to prove, it is not apparent from the definition.

\begin{theorem}
\label{thm_meet_algebra_properties}
\label{thm_classcapdual_monotone}
\label{thm_classcapdual_is_monotonic}
\lean{Family.famMeet} \leanok{}
\lean{familyMeetIsAssociative} \leanok{}
\lean{familyMeetIsCommutative} \leanok{}
\lean{familyMeetIsMonotonic} \leanok{}
    The family meet map, restricted to $\families(S)^2$,
    \begin{enumerate}
        \item is a binary operator on $\families(S)$: for all $\family, \familytwo \in \families(S)$, $\family \classcapdual \familytwo \in \families(S)$;

        \item is associative: $(\family \classcapdual \familytwo) \classcapdual \familythree = \family \classcapdual (\familytwo \classcapdual \familythree)$;

        \item is commutative: $\family \classcapdual \familytwo = \familytwo \classcapdual \family$;

        \item is monotonic: if $\family_1 \subseteq \family_2$ and $\familytwo_1 \subseteq \familytwo_2$, then $\family_1 \classcapdual \familytwo_1 \subseteq \family_2 \classcapdual \familytwo_2$.
    \end{enumerate}
\end{theorem}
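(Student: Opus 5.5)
The plan is to reduce everything to a single symmetric description of $\family \classcapdual \familytwo$. The key claim is that for families $\family$ and $\familytwo$,
\[
A \in \family \classcapdual \familytwo \iff \forall B \in \family^*, \ \forall C \in \familytwo^*, \ A \cap B \cap C \neq \emptyset.
\]
To prove it, fix $B \in \family^*$. By \cref{thm_dual_main_properties} (involution), $A \cap B \in \familytwo = (\familytwo^*)^*$ holds if and only if $A \cap B$ meets every $C \in \familytwo^*$. The right-hand side is visibly symmetric in $(\family,\familytwo)$, so commutativity (item 3) follows at once. This symmetric form is the real content, since commutativity is not apparent from the original definition.

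Items 1 and 4 follow from the same reformulation.
\begin{itemize}
\item For item 1, if $A \subseteq A'$ and $A \cap B \cap C \neq \emptyset$, then $A' \cap B \cap C \neq \emptyset$. Hence $\family \classcapdual \familytwo$ is upward closed, which is immediate from the definition as well.
\item For item 4, suppose $\family_1 \subseteq \family_2$ and $\familytwo_1 \subseteq \familytwo_2$. The antitone property gives $\family_2^* \subseteq \family_1^*$ and $\familytwo_2^* \subseteq \familytwo_1^*$. The condition defining $\family_2 \classcapdual \familytwo_2$ quantifies over fewer pairs $(B,C)$, so it is implied by the condition defining $\family_1 \classcapdual \familytwo_1$.
\end{itemize}

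Associativity (item 2) is the main obstacle, because the symmetric form involves $(\family \classcapdual \familytwo)^*$ when meeting with $\familythree$. I would aim for the three-fold symmetric description
\[
A \in (\family \classcapdual \familytwo) \classcapdual \familythree \iff \forall B \in \family^*, \ \forall C \in \familytwo^*, \ \forall D \in \familythree^*, \ A \cap B \cap C \cap D \neq \emptyset,
\]
since this form is invariant under permuting the three families.

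To prove the three-fold description, first show that $(\family \classcapdual \familytwo)^*$ consists exactly of the sets containing some $B \cap C$ with $B \in \family^*$ and $C \in \familytwo^*$. This is $\family^* \classcap \familytwo^*$, made upward closed; it is already upward closed because families are upward closed.
\begin{itemize}
\item For one inclusion: $E$ lies in the dual if and only if $S\setminus E \notin \family \classcapdual \familytwo$. By the two-fold form, this means some $B,C$ satisfy $(S\setminus E)\cap B\cap C=\emptyset$, that is, $B \cap C \subseteq E$.
\item For the other inclusion, use upward closure of $\family^*$: from $B \cap C \subseteq E$ we get $B \cup E \in \family^*$ and $(B\cup E)\cap C$ lies between $B \cap C$ and $E$ after intersecting appropriately. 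More simply, if $B \cap C \subseteq E$, then $(S\setminus E)\cap B\cap C=\emptyset$, so $S\setminus E\notin\family\classcapdual\familytwo$ directly, and $E$ is in the dual.
\end{itemize}

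Finally, apply the two-fold form to the pair $(\family\classcapdual\familytwo,\familythree)$. Then $A$ is in the triple meet if and only if $A \cap E \cap D \neq \emptyset$ for every $E$ in this dual and every $D \in \familythree^*$. Since each such $E$ contains some $B\cap C$, and the sets $B \cap C$ themselves belong to the dual, this is equivalent to the three-fold condition. The same argument applied to $\family \classcapdual (\familytwo \classcapdual \familythree)$, using commutativity, gives the same three-fold condition, which proves associativity.
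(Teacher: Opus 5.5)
Your proof is correct, and it is close to the paper's: the associativity argument is the same in substance, while the commutativity argument takes a different (though acknowledged) route.

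\textbf{Associativity.} Your two-fold characterization is exactly the identity $\family \classcapdual \familytwo = (\family^* \classcap \familytwo^*)^*$. This is the first De Morgan law of \cref{thm_de_morgan_v1} read through the involution. Your computation of $(\family \classcapdual \familytwo)^*$ is the second De Morgan law, and your three-fold condition is $\big((\family^* \classcap \familytwo^*) \classcap \familythree^*\big)^*$. So here you are running the paper's argument (De Morgan laws plus associativity of $\classcap$) unpacked to the level of elements. Associativity and commutativity of ordinary set intersection do the work that \cref{thm_join_properties} does in the paper.

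\textbf{Commutativity.} Here your route genuinely differs. The paper gives a direct complement argument from the definition: given $B \in \familytwo^*$, one has $A \cap (S \setminus (A \cap B)) = A \setminus B \notin \familytwo$. This forces $S \setminus (A \cap B) \notin \family^*$, that is, $A \cap B \in \family$. You instead read commutativity off the symmetry of the two-fold form. That is the De Morgan route the paper mentions and then sets aside in favor of the direct argument.

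\textbf{What each buys.} Your organization gives a single symmetric description from which all four items drop out. It is also self-contained: it does not rely on the join facts, which the paper only cites from Akin. The paper's organization is modular: associativity comes for free from results it needs anyway.

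A cosmetic point: in computing the dual, your first bullet is already an equivalence, so the muddled second bullet can simply be dropped.
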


\begin{proof}
If $A \in \family \classcapdual \familytwo$ and $A \subseteq B$, then $B \in \family \classcapdual \familytwo$ because $\familytwo$ is upward closed, proving 1.

Associativity of $\classcapdual$ can be proved by a straightforward but tedious argument from the definitions.  A shorter argument is available via associativity of the family join operator, the fact that the dual is an involution on families, and the DeMorgan-type laws in \cref{thm_de_morgan_v1}, none of which depend on associativity of the family meet operator. Indeed, taking the dual of statement 2. in \cref{thm_join_properties} applied to the families $\family^*$, $\familytwo^*$, and $\familythree^*$, we see
\[((\family^* \classcap \familytwo^*) \classcap \familythree^*)^* = (\family^* \classcap (\familytwo^* \classcap \familythree^*))^*,\]
which simplifies to associativity of $\classcapdual$ by \cref{thm_de_morgan_v1} and the fact that the family dual is an involution.

While commutativity of $\classcapdual$ can be shown in the same way as associativity using the DeMorgan-type laws, a direct argument is short.  Let $A \in \family \classcapdual \familytwo$. To see that $A \in \familytwo \classcapdual \family$, let $B \in \familytwo^*$.  Since $S \setminus B \not\in \familytwo$, we have that $A \cap ((S \setminus A) \cup (S \setminus B)) \not\in \familytwo$.  Since $A \in \family \classcapdual \familytwo$, we see that $(S \setminus A) \cup (S \setminus B) \not\in \family^*$, whence $A \cap B \in \family$, as desired.  We have shown that $\family \classcapdual \familytwo \subseteq \familytwo \classcapdual \family$.  The reverse containment holds by swapping $\family$ and $\familytwo$.

Monotonicity follows from a short exercise using the definition of family meet and the fact that the dual is antitone.  
\end{proof}

The following theorem summarizes the cases in which the family meet yields $\emptyset$ or $\powerset(S)$.

\begin{theorem}
\label{thm_familymeet_containment_conditions}
\label{thm_classcapdual_empty_condition}
\label{thm_classcapdual_contained_in_intersection}
\lean{familyMeetContainedInIntersection} \leanok{}
\lean{fullFamilyMeetCondition} \leanok{}
\lean{emptyFamilyMeetCondition} \leanok{}
    For all $\family, \familytwo \in \families(S)$,
    \begin{enumerate}
        \item \label{item_meet_contained_in_family}
        if $\familytwo \neq \powerset(S)$, then $\family \classcapdual \familytwo \subseteq \family$;

        \item $\family \classcapdual \familytwo = \powerset(S)$ if and only if $\family = \powerset(S)$ or $\familytwo = \powerset(S)$; and

        \item \label{item_meet_empty_condition}
        $\family \classcapdual \familytwo = \emptyset$ if and only if $\family^* \not\subseteq \familytwo$.
    \end{enumerate}
\end{theorem}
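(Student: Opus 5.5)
The three items can be proved directly from the definition of family meet. The one tool needed beyond the definition is the description $\family^* = \{A : S \setminus A \notin \family\}$ from \cref{thm_dual_main_properties}.

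For item 1, suppose $\familytwo \neq \powerset(S)$ and let $A \in \family \classcapdual \familytwo$. Assume toward a contradiction that $A \notin \family$. Then $S \setminus A \in \family^*$, since $S\setminus(S\setminus A) = A \notin \family$. Taking $B = S \setminus A$ in the definition of meet gives $\emptyset = A \cap B \in \familytwo$. Because $\familytwo$ is upward closed, this forces $\familytwo = \powerset(S)$, a contradiction. Hence $A \in \family$.

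For item 2, first take the backward direction. If $\familytwo = \powerset(S)$, every intersection lies in $\familytwo$, so the meet is all of $\powerset(S)$. If $\family = \powerset(S)$, then $\family^* = \emptyset$, so the defining condition is vacuous and again the meet is $\powerset(S)$. (Alternatively, once this case is done it follows from commutativity, \cref{thm_meet_algebra_properties}.) For the forward direction, suppose $\familytwo \neq \powerset(S)$. Item 1 then gives $\powerset(S) = \family \classcapdual \familytwo \subseteq \family$, so $\family = \powerset(S)$.

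For item 3, the key observation is that $\family\classcapdual\familytwo$ is upward closed by \cref{thm_meet_algebra_properties}. It is therefore empty if and only if $S \notin \family \classcapdual \familytwo$. By definition, $S \in \family\classcapdual\familytwo$ exactly when every $B \in \family^*$ satisfies $B = S \cap B \in \familytwo$, that is, exactly when $\family^* \subseteq \familytwo$. Negating both sides gives the claim.

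None of the steps is a real obstacle. The only points needing care are the degenerate cases involving $\emptyset$ and $\powerset(S)$: using $\emptyset \in \familytwo \Rightarrow \familytwo = \powerset(S)$ in item 1, and using $\powerset(S)^* = \emptyset$ in item 2.
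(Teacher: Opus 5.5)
Your proof is correct and follows the paper's argument essentially step for step. Item 3 is identical to the paper's. In item 2 you derive the forward direction from item 1, as the paper does, and you settle the case $\family = \powerset(S)$ directly from $\powerset(S)^* = \emptyset$ instead of via commutativity. In item 1 you use the single witness $B = S \setminus A$, obtained from the complement description of $\family^*$, whereas the paper checks that $A$ meets every $B \in \family^*$ and then invokes $(\family^*)^* = \family$.
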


\begin{proof}
    1. \ Suppose $\familytwo \neq \powerset(S)$.  Since $(\family^*)^* = \family$, it suffices to show that $\family \classcapdual \familytwo \subseteq (\family^*)^*$.  Let $A \in \family \classcapdual \familytwo$ and $B \in \family^*$.  Then $A \cap B \in \familytwo$.  Since $\familytwo \neq \powerset(S)$, $A \cap B \neq \emptyset$, showing that $A \in (\family^*)^*$.

    2. \ It is immediate from the definition of family meet that $\family \classcapdual \powerset(S) = \powerset(S)$.  By commutativity, it follows that if $\family = \powerset(S)$ or $\familytwo = \powerset(S)$, then $\family \classcapdual \familytwo = \powerset(S)$.  Conversely, if $\family \neq \powerset(S)$ and $\familytwo \neq \powerset(S)$, it follows from 1. that $\family \classcapdual \familytwo \subseteq \family \neq \powerset(S)$, as desired.

    3. \ Since $\family \classcapdual \familytwo \in \families(S)$, we have that $\family \classcapdual \familytwo = \emptyset$ if and only if $S \not\in \family \classcapdual \familytwo$.  By definition, $S \not\in \family \classcapdual \familytwo$ if and only if there exists $B \in \family^*$ such that $B \not\in \familytwo$, that is, if and only if $\family^* \not\subseteq \familytwo$.
\end{proof}

\subsection{Family join}

The \emph{family join} map $\classcap: \mathcal{P}(\mathcal{P}(S))^2 \to \mathcal{P}(\mathcal{P}(S))$ is defined by
\[\family \classcap \familytwo = \big\{C \subseteq S : \exists A \in \family, \ \exists B \in \familytwo, \ A \cap B = C \big\}\]
We call $\family \classcap \familytwo$ the \emph{family join} of $\family$ and $\familytwo$.

As discussed in the introduction, the family join map appears in the literature and is usually denoted $\family \cdot \familytwo$.  We depart from this notation since the product of two families has, to us, a more natural meaning when $S$ is a semigroup (see \cref{sec_family_meet_with_algebra}).  We choose to write family join as $\classcap$ since this symbol has elements of both the union symbol (it is a type of join) and the intersection symbol (the family is described in terms of intersections of sets).

\begin{theorem}
\label{thm_join_properties}
\lean{Family.famJoin} \leanok{}
\lean{familyJoinIsAssociative} \leanok{}
\lean{familyJoinIsCommutative} \leanok{}
\lean{familyJoinIsMonotonic} \leanok{}
    The family join, restricted to $\families(S)^2$,
    \begin{enumerate}
        \item is a binary operator on $\families(S)$: for all $\family, \familytwo \in \families(S)$, $\family \classcap \familytwo \in \families(S)$;

        \item is associative: $(\family \classcap \familytwo) \classcap \familythree = \family \classcap (\familytwo \classcap \familythree)$;

        \item is commutative: $\family \classcap \familytwo = \familytwo \classcap \family$;

        \item is monotonic: if $\family_1 \subseteq \family_2$ and $\familytwo_1 \subseteq \familytwo_2$, then $\family_1 \classcap \familytwo_1 \subseteq \family_2 \classcap \familytwo_2$.
    \end{enumerate}
\end{theorem}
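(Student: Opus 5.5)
We verify the four properties directly from the definition of family join, in the order stated.

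For 1, let $C \in \family \classcap \familytwo$ and $C \subseteq D$. Choose $A \in \family$ and $B \in \familytwo$ with $C = A \cap B$. We claim $D = (A \cup D) \cap (B \cup D)$. Indeed, the right side equals $(A \cap B) \cup D = C \cup D = D$. Since $\family$ and $\familytwo$ are upward closed, $A \cup D \in \family$ and $B \cup D \in \familytwo$, so $D \in \family \classcap \familytwo$. This step is the only one that requires a small idea: one has to enlarge both witnesses, not just one of them.

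For 2, we show each side equals
\[\{A \cap B \cap C : A \in \family,\ B \in \familytwo,\ C \in \familythree\}.\]
An element of $(\family \classcap \familytwo) \classcap \familythree$ has the form $D \cap C$ with $D = A \cap B$, $A \in \family$, $B \in \familytwo$, and $C \in \familythree$, so it is a triple intersection. Conversely, any triple intersection $A \cap B \cap C$ arises this way by setting $D = A \cap B$. The same argument handles $\family \classcap (\familytwo \classcap \familythree)$. Here we use only associativity of set intersection, not upward closure.

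For 3, commutativity follows at once from $A \cap B = B \cap A$.

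For 4, any witnesses $A \in \family_1 \subseteq \family_2$ and $B \in \familytwo_1 \subseteq \familytwo_2$ for membership in $\family_1 \classcap \familytwo_1$ are also witnesses for $\family_2 \classcap \familytwo_2$.

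The only step needing any care is 1. The degenerate cases cause no trouble: if either family is $\emptyset$, then the join is $\emptyset$, and all four statements still hold.
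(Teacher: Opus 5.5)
Your proof is correct and takes the same approach as the paper, which only says that all four items are routine exercises from the definitions (citing Akin, Ch.~2); you supply exactly those direct checks. The one step needing an idea, writing $D = (A \cup D) \cap (B \cup D)$ so that both witnesses are enlarged for upward closure, is handled correctly; the paper uses the same device of uniting both witnesses with a common set in the proof of \cref{lemma_desc_of_very_strong}.
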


\begin{proof}
All four statements are simple exercises from the definitions and can be found in \cite[Ch. 2]{akin_1997}.    
\end{proof}

The following theorem summarizes the cases in which the family join yields $\emptyset$ or $\powerset(S)$.

\begin{theorem}
\label{thm_familyjoin_containment_conditions}
\label{thm_classcap_all_sets_condition}
\label{thm_classcap_larger}
\lean{familyContainedInFamilyJoin} \leanok{}
\lean{fullFamilyJoinCondition}
\lean{emptyFamilyJoinCondition} \leanok{}
    For all $\family, \familytwo \in \families(S)$,
    \begin{enumerate}
        \item \label{item_join_contains_family}
        if $\familytwo \neq \emptyset$, then $\family \subseteq \family \classcap \familytwo$;
        
        \item $\family \classcap \familytwo = \emptyset$ if and only if $\family = \emptyset$ or $\familytwo = \emptyset$; and

        \item $\family \classcap \familytwo = \mathcal{P}(S)$ if and only if $\family \not\subseteq \familytwo^*$.
    \end{enumerate}
\end{theorem}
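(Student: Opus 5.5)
The three items are direct unwindings of the definition of $\classcap$, combined with upward closure and the description $\family^* = \{A : S \setminus A \notin \family\}$ from \cref{thm_dual_main_properties}. I will prove them in order; the only real content is in item 3.

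\textbf{Item 1.} Suppose $\familytwo \neq \emptyset$. Since $\familytwo$ is upward closed, it contains $S$. Then for any $A \in \family$, the decomposition $A = A \cap S$ exhibits $A$ as a member of $\family \classcap \familytwo$.

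\textbf{Item 2.} If $\family = \emptyset$ or $\familytwo = \emptyset$, then no pair $(A,B) \in \family \times \familytwo$ exists, so the join is empty. Conversely, if both families are non-empty, item 1 gives $\emptyset \neq \family \subseteq \family \classcap \familytwo$.

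\textbf{Item 3.} Since $\family \classcap \familytwo$ is a family by \cref{thm_join_properties}, it is upward closed. Hence it equals $\powerset(S)$ if and only if it contains $\emptyset$. By definition, $\emptyset \in \family \classcap \familytwo$ if and only if there are $A \in \family$ and $B \in \familytwo$ with $A \cap B = \emptyset$. The key translation step is to show this is equivalent to the existence of some $A \in \family$ that fails to meet some member of $\familytwo$. That condition says exactly that $A \notin \familytwo^*$ by the defining formula \eqref{eqn_fam_dual_def}, i.e.\ that $\family \not\subseteq \familytwo^*$.

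The translation step needs only the definition of the dual, not upward closure. One direction is immediate: if $A \in \family$ and $B \in \familytwo$ satisfy $A \cap B = \emptyset$, then $A$ fails to meet a member of $\familytwo$. Conversely, if $A \in \family$ and $B \in \familytwo$ with $A \cap B = \emptyset$ witness $A \notin \familytwo^*$, then $\emptyset = A \cap B$ lies in the join.

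No step is a genuine obstacle. The only care needed is in the degenerate cases: the families $\emptyset$ and $\powerset(S)$ are allowed. Item 1 uses $S \in \familytwo$, which is exactly where the hypothesis $\familytwo \neq \emptyset$ enters. In item 3, the identification of ``$= \powerset(S)$'' with ``contains $\emptyset$'' uses upward closure of the join.
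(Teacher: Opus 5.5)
Your proof is correct and matches the paper's intended argument: the paper leaves these items to the reader as direct analogues of the family-meet case. There, item 2 is likewise derived from item 1, and item 3 is likewise reduced, via upward closure, to testing membership of a single extreme set ($S$ for meet, $\emptyset$ for join).
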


\begin{proof}
    The proofs of all three statements are simple analogues of those in the proof of \cref{thm_familymeet_containment_conditions} and so are left to the reader.
\end{proof}

\subsection{DeMorgan-type laws}

The DeMorgan-type laws expressed in the following theorem relate family dual, meet, and join as would be expected by analogy with set complement, intersection, and union.

\begin{theorem}
    \label{thm_de_morgan_v1}
    \lean{familyDeMorgan1}
    \lean{familyDeMorgan2}
    For all $\family, \familytwo \in \families(S)$,
    \begin{enumerate}
        \item $(\family \classcap \familytwo)^* = \family^* \classcapdual \familytwo^*$, and
        
        \item $(\family \classcapdual \familytwo)^* = \family^* \classcap \familytwo^*$.
    \end{enumerate}
\end{theorem}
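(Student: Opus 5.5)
We prove the first law directly from the definitions and then derive the second from it, using that the dual is an involution on $\families(S)$ (\cref{thm_dual_main_properties}). Throughout, we use the description $\family^* = \{A : S \setminus A \notin \family\}$ and write $A^c = S \setminus A$.

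For the first law, fix $C \subseteq S$. Then $C \notin (\family \classcap \familytwo)^*$ if and only if $C^c \in \family \classcap \familytwo$, that is, if and only if there exist $A \in \family$ and $B \in \familytwo$ with $A \cap B = C^c$. On the other side, $C \notin \family^* \classcapdual \familytwo^*$ if and only if there exists $D \in (\family^*)^* = \family$ with $C \cap D \notin \familytwo^*$. The latter condition says $(C \cap D)^c = C^c \cup D^c \in \familytwo$. So it suffices to show that the following are equivalent:
\begin{enumerate}
    \item[(a)] $C^c = A \cap B$ for some $A \in \family$ and $B \in \familytwo$;
    \item[(b)] $C^c \cup D^c \in \familytwo$ for some $D \in \family$.
\end{enumerate}

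To see that (a) implies (b), take $D = A$. Then $C^c \subseteq B$ and $A^c \subseteq B$, so $C^c \cup A^c \subseteq B$, and upward closure of $\familytwo$ gives $C^c \cup A^c \in \familytwo$.

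To see that (b) implies (a), set $B = C^c \cup D^c \in \familytwo$ and $A = C^c \cup D$. Since $A \supseteq D$, upward closure of $\family$ gives $A \in \family$. Finally,
\[
A \cap B = C^c \cup (D \cap D^c) = C^c.
\]
This proves the first law. The only step needing thought is this choice of $A$ and $B$; everything else is unwinding definitions.

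For the second law, apply the first law to the families $\family^*$ and $\familytwo^*$ and use the involution property:
\[
(\family^* \classcap \familytwo^*)^* = \family \classcapdual \familytwo.
\]
Taking duals of both sides and applying the involution property once more, since $\family^* \classcap \familytwo^*$ is a family by \cref{thm_join_properties}, yields
\[
(\family \classcapdual \familytwo)^* = \family^* \classcap \familytwo^*.
\]

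This argument does not use associativity of $\classcapdual$, as required by its use in the proof of \cref{thm_meet_algebra_properties}.
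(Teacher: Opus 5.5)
Your overall strategy is sound, and your derivation of the second law from the first matches the paper's. However, the step (a)$\Rightarrow$(b) is wrong as written. From $C^c = A\cap B$ you cannot conclude $A^c\subseteq B$: take $S=\{1,2\}$ and $A=B=\{1\}$, so that $A^c=\{2\}\not\subseteq B$. Moreover, even if $C^c\cup A^c\subseteq B$ did hold, upward closure of $\familytwo$ would not give $C^c\cup A^c\in\familytwo$, since upward closure passes membership to supersets, not subsets.

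The correct observation runs the other way. Every $b\in B$ lies either in $A^c$ or in $A\cap B=C^c$, so $B\subseteq C^c\cup A^c$, and then upward closure gives $C^c\cup A^c\in\familytwo$. With that one-line repair, your proof of the first law is complete. Your witnesses $A=C^c\cup D$ and $B=C^c\cup D^c$ for (b)$\Rightarrow$(a) are correct. Also note that your opening step applies the complement description of the dual to $\family\classcap\familytwo$, which requires knowing that it is upward closed (\cref{thm_join_properties}); you only cite this later, in the second law.

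For comparison, the paper avoids constructing witnesses by using the intersection form of the dual rather than the complement form. It argues that $C\in(\family\classcap\familytwo)^*$ if and only if $C\cap A\cap B\neq\emptyset$ for all $A\in\family$ and $B\in\familytwo$. This holds if and only if $C\cap A\in\familytwo^*$ for all $A\in\family=(\family^*)^*$, which is the definition of $C\in\family^*\classcapdual\familytwo^*$. That chain is pure quantifier unwinding and cites only the involution property. Your complement-based route relies on upward closure repeatedly and needs an explicit set construction in each direction, which is exactly where the slip occurred.
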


\begin{proof}
    To see 1., note that in the following list, each statement is equivalent to the one that follows it.
    \begin{enumerate}[label=\roman*.]
        \item $A \in (\family \classcap \familytwo)^*$.
        \item for all $B \in \family \classcap \familytwo$, $A \cap B \neq \emptyset$.
        \item for all $C \in \family$ and $D \in \familytwo$, $A \cap C \cap D \neq \emptyset$.
        \item for all $C \in \family$, $A \cap C \in \familytwo^*$.
        \item for all $C \in (\family^*)^*$, $A \cap C \in \familytwo^*$.
        \item $A \in \family^* \classcapdual \familytwo^*$.
    \end{enumerate}
    Each equivalence follows directly from the definitions, except for the equivalence between iv. and v., which follows from the fact that the family dual is an involution on families.

    Now 2. follows from analogous reasoning: in the following list, each statement is equivalent to the one that follows it.
    \begin{enumerate}[label=\roman*.]
        \item $A \in (\family \classcapdual \familytwo)^*$.
        \item for all $B \in \family \classcapdual \familytwo$, $A \cap B \neq \emptyset$.
        \item for all $B \in (\family^*)^* \classcapdual (\familytwo^*)^*$, $A \cap B \neq \emptyset$.
        \item for all $B \in (\family^* \classcap \familytwo^*)^*$, $A \cap B \neq \emptyset$.
        \item $A \in ((\family^* \classcap \familytwo^*)^*)^*$.
        \item $A \in \family^* \classcap \familytwo^*$.
    \end{enumerate}
    Each equivalence follows directly from the definitions, except for the equivalences between ii. and iii. and between v. and vi., which follow from the fact that the family dual is an involution on families, and the equivalence between iii. and iv., which follows from the first paragraph of this proof.
\end{proof}

It is immediate that the union and intersection of a collection of families is again a family.  The DeMorgan-type laws apply to unions and intersections, as well.

\begin{lemma}
    \label{lemma_demorgan_for_union_and_int}
    \lean{unionDeMorgan1}
    \lean{unionDeMorgan2}
    For all collections $(\family_i)_{i \in I}$ of families of subsets of $S$,
    \begin{enumerate}
        \item $\displaystyle \bigg( \bigcap_i \family_i \bigg)^* = \bigcup_i \family_i^*$, and
        
        \item $\displaystyle \bigg( \bigcup_i \family_i \bigg)^* = \bigcap_i \family_i^*$.
    \end{enumerate}
\end{lemma}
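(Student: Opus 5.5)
We prove both identities by chasing elements through the definition of the family dual in \eqref{eqn_fam_dual_def}; nothing beyond set-theoretic quantifier manipulation is needed. The semigroup structure and the fact that the $\family_i$ are upward closed play no role. We also make no use of the involution property from \cref{thm_dual_main_properties}, so the lemma holds for arbitrary collections in $\powerset(\powerset(S))$.

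For the second identity, fix $A \subseteq S$. By definition, $A \in (\bigcup_i \family_i)^*$ if and only if $A \cap B \neq \emptyset$ for every $B \in \bigcup_i \family_i$. This is the same as saying that for every $i \in I$ and every $B \in \family_i$, we have $A \cap B \neq \emptyset$. In other words, $A \in \family_i^*$ for every $i$, that is, $A \in \bigcap_i \family_i^*$. The only step is interchanging the quantifier over the union with a quantifier over indices.

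For the first identity, we argue by complements. The condition $A \notin (\bigcap_i \family_i)^*$ means there is some $B \in \bigcap_i \family_i$ with $A \cap B = \emptyset$. The condition $A \notin \bigcup_i \family_i^*$ means that for every $i$ there is some $B_i \in \family_i$ with $A \cap B_i = \emptyset$. The first condition clearly implies the second: take $B_i = B$. The converse is the one step that needs care, because the witnesses $B_i$ may differ with $i$. The fix is to replace each $B_i$ by the single set $S \setminus A$. Each $B_i$ is contained in $S \setminus A$, and $\family_i$ is upward closed, so $S \setminus A \in \family_i$ for every $i$. Hence $S \setminus A \in \bigcap_i \family_i$, and it is disjoint from $A$.

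This upward-closure step is the only point where we use that the $\family_i$ are Furstenberg families; without it the first identity can fail. Equivalently, we could use the description $\family^* = \{A : S\setminus A \notin \family\}$ from \cref{thm_dual_main_properties}. Then both identities reduce at once to the facts that $S \setminus A \notin \bigcap_i \family_i$ if and only if $S \setminus A \notin \family_i$ for some $i$, and that $S \setminus A \notin \bigcup_i \family_i$ if and only if $S \setminus A \notin \family_i$ for all $i$. The one subtlety is the degenerate case $I = \emptyset$. There we must interpret $\bigcap_i \family_i$ as $\powerset(S)$, and then the identities reduce to $\powerset(S)^* = \emptyset$ and $\emptyset^* = \powerset(S)$, both of which were noted after \cref{thm_dual_main_properties}.
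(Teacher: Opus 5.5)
Your proof is correct and is essentially the paper's. The paper runs both identities through the description $\family^* = \{A \subseteq S : S \setminus A \notin \family\}$ from \cref{thm_dual_main_properties}, which is exactly your ``equivalently'' remark. Your witness-replacement step for the first identity is that description unpacked, and your direct argument for the second identity shows it needs no upward closure.

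One inconsistency to fix: your opening claim that upward closure plays no role, and that the lemma holds for arbitrary collections in $\powerset(\powerset(S))$, is false for the first identity. You observe this yourself later, so only the second identity holds in that generality.
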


\begin{proof}
    By \cref{thm_dual_main_properties}, each of the following statements is equivalent to the one that follows it.
    \begin{enumerate}[label=\roman*.]
        \item $A \in \left( \cap_i \family_i \right)^*$.
        \item $S \setminus A \not\in \cap_i \family_i$.
        \item There exists $i \in I$, $S \setminus A \not\in \family_i$.
        \item There exists $i \in I$, $A \in \family_i^*$.
        \item $A \in \cup_i \family_i^*$.
    \end{enumerate}
    The proof of 2. is analogous.
\end{proof}

The following lemma helps to reformulate quantifier strings like those found in the statement of \cref{mainthm_gen_of_bfw} into the family algebra introduced above.  For a family $\familytwo$ and a set $F \subseteq S$, we denote by $\familytwo_{\cap F}$ the family of sets $A \subseteq S$ for which $A \cap F \in \familytwo$.

\begin{lemma}
\label{lemma_desc_of_very_strong}
\lean{memberOfCapFamJoinH}
\lean{memberOfCapFamDualMeetH}
\lean{iInterCapFamilyDescription}
\lean{iUnionCapFamilyDualDescription}
    For all $\familytwo, \familythree \in \families(S)$ and $A, F \subseteq S$,
    \begin{enumerate}
        \item $A \in \familytwo_{\cap F} \classcap \familythree$ if and only if there exists $G \in \familytwo$ with $G \subseteq F$ and $B \in \familythree$ such that $A \cap G = B \cap G$; and
        
        \item $A \in (\familytwo_{\cap F})^* \classcapdual \familythree$ if and only if for all $G \in \familytwo$ with $G \subseteq F$, $A \cap G \in \familythree$.
    \end{enumerate}
    Consequently, for all $\family, \familytwo, \familythree \in \families(S)$,
    \begin{enumerate}
        \setcounter{enumi}{2}
        \item $\displaystyle \bigcap_{F \in \family} \big( \familytwo_{\cap F} \classcap \familythree \big) = \big\{ A \subseteq S : \forall F \in \family, \ \exists G \in \familytwo, \ G \subseteq F, \ \exists B \in \familythree, \ A \cap G = B \cap G \big\}$, and

        \item $\displaystyle \bigcup_{F \in \family} \big( (\familytwo_{\cap F})^* \classcapdual \familythree \big) = \big\{ A \subseteq S : \exists F \in \family, \ \forall G \in \familytwo, \ G \subseteq F, \ A \cap G \in \familythree \big\}$.
    \end{enumerate}
\end{lemma}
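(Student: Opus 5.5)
We prove items 1 and 2 by unwinding the definitions, and then obtain items 3 and 4 by quantifying over $F \in \family$.

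For item 1, suppose first that $A \in \familytwo_{\cap F} \classcap \familythree$. Then $A = C \cap B$ for some $C \in \familytwo_{\cap F}$ and $B \in \familythree$. Put $G = C \cap F$. We have $G \in \familytwo$ and $G \subseteq F$. Moreover, $A \cap G = C \cap B \cap C \cap F = B \cap G$.

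Conversely, suppose $G \in \familytwo$ with $G \subseteq F$ and $B \in \familythree$ satisfy $A \cap G = B \cap G$. Define
\[C = A \cup (S \setminus B).\]
Then $C \cap F \supseteq G \cap A \cup (G \setminus B)$. Since $A \cap G = B \cap G$, this union is all of $G$, so $C \cap F \supseteq G$. Upward closure of $\familytwo$ then gives $C \in \familytwo_{\cap F}$. Finally,
\[C \cap B = (A \cap B) \cup ((S\setminus B)\cap B) = A \cap B.\]
This shows $A \cap B \in \familytwo_{\cap F} \classcap \familythree$. Since the join of families is upward closed (\cref{thm_join_properties}) and $A \supseteq A \cap B$, we conclude $A$ is in the join. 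The only step needing care here is choosing $C$ so that $C \cap B$ lands inside $A$.

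For item 2, we do not argue directly from the definition. Since $\familytwo_{\cap F}$ is a family, \cref{thm_dual_main_properties} gives $((\familytwo_{\cap F})^*)^* = \familytwo_{\cap F}$. Hence, by definition of the meet, $A \in (\familytwo_{\cap F})^* \classcapdual \familythree$ if and only if $A \cap C \in \familythree$ for every $C \in \familytwo_{\cap F}$. We now compare this with the condition in item 2.

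First assume $A \cap C \in \familythree$ for every $C \in \familytwo_{\cap F}$. Any $G \in \familytwo$ with $G \subseteq F$ satisfies $G \cap F = G \in \familytwo$, so $G \in \familytwo_{\cap F}$. Therefore $A \cap G \in \familythree$.

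Conversely, assume $A \cap G \in \familythree$ for all $G \in \familytwo$ with $G \subseteq F$. Take $C \in \familytwo_{\cap F}$ and set $G = C \cap F$. Then $G \in \familytwo$ and $G \subseteq F$, so $A \cap G \in \familythree$. Since $A \cap C \supseteq A \cap G$, upward closure gives $A \cap C \in \familythree$.

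Items 3 and 4 follow at once. Take the intersection over $F \in \family$ of the characterization in item 1, and the union over $F \in \family$ of the characterization in item 2.
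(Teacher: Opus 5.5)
Your proof is correct and follows essentially the same route as the paper: unwind the definitions for items 1 and 2, then quantify over $F \in \family$ to get items 3 and 4. The only cosmetic difference is in the reverse direction of item 1. The paper writes $A$ itself as $\big(G \cup (A \setminus G)\big) \cap \big(B \cup (A \setminus G)\big)$, whereas you write $A \cap B = \big(A \cup (S \setminus B)\big) \cap B$ and then appeal to upward closure of the join, which is valid because $\familytwo_{\cap F}$ is itself a family. Your argument for item 2, which the paper leaves to the reader, is also fine.
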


\begin{proof}
    To show 1., by the definition of family join and $\familytwo_{\cap F}$, we must show that the following are equivalent:
    \begin{enumerate}[label=\roman*.]
        \item there exists $G \subseteq S$ for which $G \cap F \in \familytwo$ and $B \in \familythree$ such that $A = B \cap G$;

        \item there exists $G \in \familytwo$ with $G \subseteq F$ and $B \in \familythree$ such that $A \cap G = B \cap G$.
    \end{enumerate}
    If i. holds, then $G' \defeq G \cap F \in \familytwo$, and $A \cap G' = B \cap G'$.  If ii. holds, then we see
    \begin{align*}
        A &= \big(A \cap (S \setminus G) \big) \cup \big(A \cap G \big) \\
        &= \big(A \cap (S \setminus G) \big) \cup \big(B \cap G \big)\\
        &= \big( G \cup ( A \cap (S \setminus G)) \big) \cap \big( B \cup (A \cap (S \setminus G))\big) \\
        &= G' \cap B',
    \end{align*}
    which shows that $A$ is the intersection of a set $G' \subseteq S$ for which $G' \cap F \in \familytwo$ (since it contains $G$) and $B' \in \familythree$ (since it contains $B$).

    The proof of 2. is similar but simpler, and so we leave it to the reader. The statements in 3. and 4. follow immediately from those in 1. and 2.
\end{proof}

\subsection{Filters and partition regular families}
\label{sec_filters_and_pr_fams_algebra}

As defined in the introduction, a family $\family$ is a \emph{filter} if it is non-empty and for all $A, B \in \family$, we have that $A \cap B \in \family$.  A family $\family$ is \emph{partition regular} if it is not equal to $\powerset(S)$ and for all $A, B \subseteq S$, that $A \cup B \in \family$ implies that $A \in \family$ or $B \in \family$.

Let us explain our conventions.
In the literature, filters are always defined to be non-empty and are often defined to not include the empty set.
We have decided to adopt the first convention but not the second.
To maintain the duality between filters and partition regularity (as in \cref{thm_pr_iff_dual_is_filter}), we are led to the somewhat strange choice that the empty family $\emptyset$ is partition regular while the power set $\powerset(S)$ is not.
These specific choices are not particularly important for our purposes here: all interesting applications will feature non-empty, non-power-set families, and
all results in this section remain true or require only minor modifications in the event that $\emptyset$ is allowed to be filter or $\powerset(S)$ is not.

The following theorem characterizes families that are idempotent for meet or join, showing, in particular, that $(\families(S),\classcap,\classcapdual)$ is far from being a lattice.  The statement about filters is contained in \cite[Prop. 2.1(d)]{akin_1997}.

\begin{theorem}
\lean{familyIsIdempotentForJoinIffFilter}
\lean{familyIsIdempotentForMeetIffPR}
    Let $\family \in \families(S)$.
    \begin{enumerate}
        \item The family $\family$ is a filter or $\family = \emptyset$ if and only if $\family \classcap \family = \family$.
        
        \item The family $\family$ is partition regular or $\family = \powerset(S)$ if and only if $\family \classcapdual \family = \family$.
    \end{enumerate}
\end{theorem}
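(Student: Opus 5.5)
I will prove part 1 directly from the definitions and then obtain part 2 by duality, using the DeMorgan-type laws of \cref{thm_de_morgan_v1} and the filter/partition-regular duality (\cref{thm_family_pr_iff_dual_is_filter}, which the paper states is proved independently).

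\emph{Part 1.} If $\family = \emptyset$, then $\family \classcap \family = \emptyset = \family$ by definition. If $\family$ is a filter, it is non-empty, so \cref{thm_familyjoin_containment_conditions}(\ref{item_join_contains_family}) gives $\family \subseteq \family \classcap \family$. Conversely, any $C = A \cap B$ with $A, B \in \family$ lies in $\family$ by the filter property, so equality holds.

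For the converse, suppose $\family \classcap \family = \family$ and $\family \neq \emptyset$. Then $\family$ is non-empty, and for $A, B \in \family$ we have $A \cap B \in \family \classcap \family = \family$. Hence $\family$ is a filter.

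\emph{Part 2.} Since the dual is an involution on $\families(S)$ (\cref{thm_dual_main_properties}), $\family \classcapdual \family = \family$ holds if and only if $(\family \classcapdual \family)^* = \family^*$. By \cref{thm_de_morgan_v1}(2), the left side is $\family^* \classcap \family^*$, so the condition is equivalent to $\family^* \classcap \family^* = \family^*$. By part 1 applied to $\family^*$, this holds if and only if $\family^*$ is a filter or $\family^* = \emptyset$.

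Now $\family^* = \emptyset$ if and only if $\family = \powerset(S)$, by the explicit duality $\emptyset^* = \powerset(S)$ together with involutivity. Also, $\family^*$ is a filter if and only if $\family$ is partition regular. This is the duality theorem cited above; it is consistent with the paper's conventions, under which $\emptyset$ counts as partition regular (its dual $\powerset(S)$ is a filter) and $\powerset(S)$ does not (its dual $\emptyset$ is not a filter). Combining these equivalences gives part 2.

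The only step that needs care is the convention bookkeeping around $\emptyset$ and $\powerset(S)$ in part 2, namely checking that the duality theorem matches the degenerate cases exactly. If one prefers not to rely on that theorem, it can be verified directly. Using $\family^* = \{A : S \setminus A \notin \family\}$, the statement that $\family^*$ is closed under intersections translates, via complements, into the condition that $A \cup B \in \family$ implies $A \in \family$ or $B \in \family$. Likewise, $\family^* \neq \emptyset$ means $S \in \family^*$, that is, $\emptyset \notin \family$, which for a Furstenberg family is equivalent to $\family \neq \powerset(S)$.
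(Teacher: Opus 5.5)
Your proposal is correct and follows essentially the same route as the paper: part 1 is checked directly from the definitions, and part 2 is obtained by dualizing, using the involution property, the DeMorgan-type law $(\family \classcapdual \family)^* = \family^* \classcap \family^*$, part 1 applied to $\family^*$, and the filter--partition-regular duality of \cref{thm_pr_iff_dual_is_filter}. Your optional direct check of that duality, including the degenerate cases $\emptyset$ and $\powerset(S)$, is exactly the argument the paper uses to prove that theorem. That argument does not rely on the present statement, so there is no circularity.
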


\begin{proof}
    1. \ If $\family = \emptyset$ or $\family$ is a filter, then it follows by the definitions that $\family \classcap \family = \family$.  Conversely, if $\family \classcap \family = \family$ and $\family \neq \emptyset$, then $\family$ is a filter: it is non-empty and for all $A, B \in \family$, $A \cap B \in \family \classcap \family = \family$.

    2. \ Just as 1., this can be verified by an elementary argument from the definitions.  A shorter argument is available using \cref{thm_pr_iff_dual_is_filter}, the fact that the dual is an involution, and the DeMorgan-type laws.  Indeed, the family $\family$ is partition regular or $\family = \powerset(S)$ if and only if the family $\family^*$ is a filter or $\family^* = \emptyset$. By 1., this happens if and only if $\family^* \classcap \family^* = \family^*$, which, by taking the dual, is seen to be equivalent to $\family \classcapdual \family = \family$ by the DeMorgan-type laws and the fact that the family dual is an involution.
\end{proof}

The duality between partition regular families and filters expressed in the following theorem is well-known; see, eg. \cite[Pg. 26]{akin_1997} and \cite[Prop. 2.5(e)]{christopherson_johnson_2022}.

\begin{theorem}
\label{thm_pr_iff_dual_is_filter}
\label{thm_family_pr_iff_dual_is_filter}
\lean{familyIsPRIffDualIsFilter}
\lean{dualFFilterIffFMeetDualFIsDualF}
\lean{FJoinDualFIsFIffFMeetDualFIsDualF}
\lean{}
\lean{extendedFilterPRDuality}
    Let $\family \in \families(S)$.  The following are equivalent:
    \begin{enumerate}
        \item $\family$ is partition regular
        \item $\family^*$ is a filter
        \item $\family \classcap \family^* = \family$
        \item $\family \classcapdual \family^* = \family^*$
    \end{enumerate}
    If one (equivalently, all) of these hold and if $\family \neq \emptyset$, then $\family^* \subseteq \family$.
\end{theorem}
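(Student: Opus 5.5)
We prove the cycle of implications 1 $\Rightarrow$ 2 $\Rightarrow$ 3 $\Rightarrow$ 4 $\Rightarrow$ 1, and then the final containment. Throughout we use freely the involution and antitone properties of the dual (\cref{thm_dual_main_properties}), its complement description $\family^* = \{A : S\setminus A \notin \family\}$, and the DeMorgan-type laws (\cref{thm_de_morgan_v1}). We also use the edge conventions: $\emptyset$ counts as partition regular and $\powerset(S)$ counts as a filter, but $\powerset(S)$ is not partition regular and $\emptyset$ is not a filter.

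\emph{1 $\Leftrightarrow$ 2.} This is the classical duality, handled directly with the complement description.
\begin{itemize}
\item First, $\family \neq \powerset(S)$ if and only if $\emptyset \notin \family$, which holds if and only if $S \in \family^*$, i.e.\ $\family^* \neq \emptyset$.
\item Next, take $A, B \in \family^*$. Then $A \cap B \notin \family^*$ exactly when $(S\setminus A)\cup(S\setminus B) \in \family$.
\item Partition regularity then forces $S\setminus A\in\family$ or $S\setminus B\in\family$, which is a contradiction; so $\family^*$ is closed under intersections.
\item Conversely, if $\family^*$ is a filter and $A\cup B\in\family$ with $A,B\notin\family$, then $S\setminus A$ and $S\setminus B$ lie in $\family^*$. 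Their intersection $S\setminus(A\cup B)$ lies in $\family^*$, so $A\cup B\notin\family$, a contradiction.
\end{itemize}

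\emph{2 $\Leftrightarrow$ 4.} By the first DeMorgan law, $(\family^*\classcap\family^*)^* = \family\classcapdual\family$, which is suggestive, but statement 4 involves $\family\classcapdual\family^*$, so I would argue directly.
\begin{itemize}
\item Unwinding the definition, $A \in \family\classcapdual\family^*$ means that $A\cap B\in\family^*$ for all $B\in\family^*$.
\item If $\family^*$ is a filter, every $A\in\family^*$ satisfies this, so $\family^*\subseteq \family\classcapdual\family^*$.
\item For the reverse inclusion, if $\family^*\neq\powerset(S)$, \cref{thm_familymeet_containment_conditions} item 1 gives $\family\classcapdual\family^*\subseteq\family$. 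That is the wrong family, however, so I would instead use $S\in\family^*$: taking $B=S$ shows $A\in\family^*$, giving $\family\classcapdual\family^*\subseteq\family^*$ whenever $\family^*\neq\emptyset$.
\item Conversely, assume 4. The identity $\family\classcapdual\family^*=\family^*$ says precisely that for $A,B\in\family^*$ we have $A\cap B\in\family^*$.
\item Nonemptiness of $\family^*$ must be extracted separately. If $\family^*=\emptyset$, then $\family=\powerset(S)$, and by \cref{thm_familymeet_containment_conditions} item 2 the meet equals $\powerset(S)\neq\emptyset$, so 4 fails. Hence 4 forces $\family^*\neq\emptyset$. The filter conditions are now met, and 4 implies 2.
\end{itemize}

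\emph{3 $\Leftrightarrow$ 4.} The second DeMorgan law gives $(\family\classcapdual\family^*)^* = \family^*\classcap\family$, and this equals $\family\classcap\family^*$ by commutativity (\cref{thm_join_properties}). Since the dual is an involution, 4 is equivalent to $\family\classcap\family^* = \family$, which is statement 3.

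\emph{Final claim.} Assume the conditions hold and $\family\neq\emptyset$. Then $\family\classcap\family^*=\family$. By \cref{thm_familyjoin_containment_conditions} item 1, since $\family\neq\emptyset$, we have $\family^*\subseteq\family^*\classcap\family=\family$.

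The only delicate point is bookkeeping the degenerate families $\emptyset$ and $\powerset(S)$ in the 2 $\Leftrightarrow$ 4 step. I expect that to be the main (minor) obstacle.
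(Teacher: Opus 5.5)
Your proof is correct and follows essentially the same route as the paper: $1\Leftrightarrow 2$ via the complement description of the dual, $3\Leftrightarrow 4$ via the second DeMorgan law, and $2\Leftrightarrow 4$ directly from the definition of meet, with the degenerate case handled by \cref{thm_familymeet_containment_conditions}. The differences are only cosmetic:
\begin{itemize}
\item For $\family\classcapdual\family^*\subseteq\family^*$, you test against $B=S$. The paper instead applies commutativity of meet together with item 1, which, contrary to your remark, does yield the right family.
\item You get $\family^*\neq\emptyset$ from item 2 rather than item 3.
\item You prove the final containment through statement 3 and $\family^*\subseteq\family^*\classcap\family$. The paper takes the dual step $\family^*=\family\classcapdual\family^*\subseteq\family$.
\end{itemize}
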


\begin{proof}
We begin by showing that $\family$ is partition regular if and only if $\family^*$ is a filter.  Each statement in the following list is equivalent to the one that precedes it by the reason given between the statements.
    \begin{enumerate}[label=\roman*.]
        \item $\family$ is a partition regular family.
        \begin{itemize}
            \item[--] By definition and the fact that $\powerset(S)^* = \emptyset$.
        \end{itemize}
    
        \item $\family^* \neq \emptyset$ and for all $A, B \subseteq S$, if $A \cup B \in \family$, then $A \in \family$ or $B \in \family$.
        \begin{itemize}
            \item[--] The family dual is an involution.
        \end{itemize}
    
        \item $\family^* \neq \emptyset$ and for all $A, B \subseteq S$, if $A \cup B \in (\family^*)^*$, then $A \in (\family^*)^*$ or $B \in (\family^*)^*$.
        \begin{itemize}
            \item[--] By the equivalent description of the family dual.
        \end{itemize}
    
        \item $\family^* \neq \emptyset$ and for all $A, B \subseteq S$, if $S \setminus (A \cup B) \not\in \family^*$, then $S \setminus A \not\in \family^*$ or $S \setminus B \not\in \family^*$.
        \begin{itemize}
            \item[--] By basic set theory.
        \end{itemize}
    
        \item $\family^* \neq \emptyset$ and for all $A, B \subseteq S$, if $(S \setminus A) \cap (S \setminus B) \not\in \family^*$, then $S \setminus A \not\in \family^*$ or $S \setminus B \not\in \family^*$.
        \begin{itemize}
            \item[--] By basic logic: $(\neg p \Rightarrow (\neg q \vee \neg r)) \iff (q \wedge r \to p)$.
        \end{itemize}
    
        \item $\family^* \neq \emptyset$ and for all $A, B \subseteq S$, if $S \setminus A \in \family^*$ and $S \setminus B \in \family^*$, then $(S \setminus A) \cap (S \setminus B) \in \family^*$.
        \begin{itemize}
            \item[--] Label $C = S \setminus A$ and $D = S \setminus B$ and use basic set theory.
        \end{itemize}
    
        \item $\family^* \neq \emptyset$ and  for all $C, D \subseteq S$, if $C \in \family^*$ and $D \in \family^*$, then $C \cap D \in \family^*$.
        \begin{itemize}
            \item[--] By definition.
        \end{itemize}
    
        \item $\family^*$ is a filter.
    \end{enumerate}

    That 3. and 4. are equivalent follows immediately from the DeMorgan-type laws.  To conclude that 1. through 4. are equivalent, it suffices to show, then, that $\family^*$ is a filter if and only if $\family \classcapdual \family^* = \family^*$.

    Suppose $\family^*$ is a filter.  Since $\family^* \neq \emptyset$, we have that $\family \neq \powerset(S)$, and so by \cref{thm_familymeet_containment_conditions} and commutativity of family meet that $\family \classcapdual \family^* \subseteq \family^*$.  That $\family^* \subseteq \family \classcapdual \family^*$ follows then immediately from the fact that $\family^*$ is a filter and the definition of family meet.

    Conversely, suppose that $\family^* = \family \classcapdual \family^*$.  It follows from statement 3. in \cref{thm_familymeet_containment_conditions} that $\family \classcapdual \family^* \neq \emptyset$, so $\family^* \neq \emptyset$. If $A, B \in \family^*$, then we see from $A \in \family^* = \family \classcapdual \family^*$ that $A \cap B \in \family^*$, showing that $\family^*$ is a filter.

    Finally, assume one of the 4 equivalent statements hold and that $\family \neq \emptyset$.  Since $\family^* \neq \powerset(S)$, we have by \cref{thm_familymeet_containment_conditions} that $\family^* = \family \classcapdual \family^* \subseteq \family$, as was to be shown.
\end{proof}

That $\family \classcap \family^*$ is partition regular is well-known \cite[Prop. 2.5 (h)]{christopherson_johnson_2022}.  The following theorem states both this fact and its dual formulation, which, in our opinion, is significantly more natural to prove.  The fact, then, that $\family \classcap \family^*$ is partition regular follows immediately from \cref{thm_pr_iff_dual_is_filter}.
While family meet appears to be new, this dual approach can be found elsewhere, eg. \cite[Prop. 2.1(e)]{akin_1997}.

\begin{theorem}
\label{thm_fam_classcap_dual_is_pr}
\lean{meetOfFamilyAndDualIsFilter}
\lean{joinOfFamilyAndDualIsPR}
    For all $\family \in \families(S)$,
    \begin{enumerate}
        \item the family $\family \classcapdual \family^*$ is a filter, and
                
        \item the family $\family \classcap \family^*$ is partition regular.
    \end{enumerate}
\end{theorem}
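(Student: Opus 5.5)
We prove statement 1 directly from the definitions and then deduce statement 2 from it by duality, as the surrounding discussion suggests.

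For statement 1, recall that $\family \classcapdual \family^* = \{A \subseteq S : \forall B \in \family^*, \ A \cap B \in \family^*\}$.
\begin{itemize}
    \item \emph{Non-emptiness.} The set $S$ belongs to $\family \classcapdual \family^*$, since $S \cap B = B \in \family^*$ for every $B \in \family^*$.
    \item \emph{Closure under intersections.} Let $A_1, A_2 \in \family \classcapdual \family^*$ and $B \in \family^*$. Since $A_2 \in \family \classcapdual \family^*$, we have $A_2 \cap B \in \family^*$. Applying the defining property of $A_1$ to the member $A_2 \cap B$ of $\family^*$ gives $A_1 \cap A_2 \cap B \in \family^*$. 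As $B \in \family^*$ was arbitrary, $A_1 \cap A_2 \in \family \classcapdual \family^*$.
\end{itemize}
Upward closure is already supplied by \cref{thm_meet_algebra_properties}, so $\family \classcapdual \family^*$ is a filter. This step is a one-liner, and no case distinction is needed for $\family = \emptyset$ or $\family = \powerset(S)$: the filter definition only requires non-emptiness, which we have shown.

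For statement 2, apply the De Morgan-type law of \cref{thm_de_morgan_v1} together with the involution property of \cref{thm_dual_main_properties}:
\[(\family \classcap \family^*)^* = \family^* \classcapdual (\family^*)^* = \family^* \classcapdual \family.\]
By commutativity of family meet (\cref{thm_meet_algebra_properties}), this equals $\family \classcapdual \family^*$, which is a filter by statement 1.

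The equivalence of items 1 and 2 in \cref{thm_pr_iff_dual_is_filter}, applied to the family $\family \classcap \family^*$, then shows that $\family \classcap \family^*$ is partition regular. That $\family \classcap \family^*$ is a family at all follows from \cref{thm_join_properties}.

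There is no real obstacle in this argument. The only point requiring care is to invoke the filter–partition regularity duality in the correct direction, and to note that the paper's conventions (the empty family is partition regular and $\powerset(S)$ is not) are exactly those built into \cref{thm_pr_iff_dual_is_filter}, so the edge cases are handled automatically.
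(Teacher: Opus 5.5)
Your proof is correct and follows the paper's argument essentially verbatim: you get closure under intersections from $A_1 \cap (A_2 \cap B) \in \family^*$, and statement 2 from the DeMorgan-type laws together with the filter--partition regularity duality of \cref{thm_pr_iff_dual_is_filter}. The only cosmetic difference is that you check non-emptiness directly via $S \in \family \classcapdual \family^*$, whereas the paper cites statement 3 of \cref{thm_familymeet_containment_conditions}, whose proof is exactly that check.
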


\begin{proof}
1. \ If follows from statement 3. in \cref{thm_familymeet_containment_conditions} that $\family \classcapdual \family^* \neq \emptyset$.  Suppose $A, B \in \family \classcapdual \family^*$.  To see that $A \cap B \in \family \classcapdual \family^*$, let $C \in \family^*$.   Then $B \cap C \in \family^*$, so $A \cap B \cap C \in \family^*$, as was to be shown.

2. \ That $\family \classcap \family^*$ is partition regular follows immediately from the DeMorgan-type laws and the filter-partition regular duality in \cref{thm_pr_iff_dual_is_filter}.
\end{proof}

\section{Recitation of the algebra}
\label{sec_recitation}

As demonstrated in the introduction, we use the algebra of Furstenberg families detailed in \cref{sec_algebra_structure_of_ff} in the course of our proof of \cref{mainthm_gen_of_bfw}.
The goal in this section is to give further concrete examples of the utility of that algebra.
More precisely,
\begin{itemize}
    \item in \cref{sec_ps_is_partition_reg}, we give the shortest proof we know of that piecewise syndeticity is a partition regular notion;

    \item in \cref{sec_char_of_central_star}, we give two new characterizations of members of the family $\central^*$, sets that have non-empty intersection with all central sets; and

    \item in \cref{sec_ps_and_strongps_families}, we give an algebraic description of the ``strongly piecewise'' construct.

\end{itemize}
Throughout this section, let $S$ be a semigroup.

\subsection{Piecewise syndetic and thickly syndetic sets}
\label{sec_ps_is_partition_reg}

Recall that $\syndetic$ and $\thick$ denote the families of syndetic and thick subsets of $S$, respectively. The family of \emph{piecewise syndetic} sets is defined to be
\[\PS = \syndetic \classcap \thick.\]
That $\PS$ is partition regular was shown first in \cite{brown_1971} and has appeared numerous times since.  We offer our brief take here via thickly syndetic sets and the algebra in \cref{sec_algebra_structure_of_ff}.

The dual of $\PS$ is $\PS^*$, the family of \emph{thickly syndetic}.  Via the DeMorgan-type laws in \cref{thm_de_morgan_v1},
\[\PS^* = \thick \classcapdual \syndetic,\]
which is to say that a set is thickly syndetic if and only if it has syndetic intersection with every syndetic set.  It can immediately be seen that $\PS^*$ is a filter: if two sets have syndetic intersection with every syndetic set, then so does their intersection.  By the filter-partition regularity duality in \cref{thm_pr_iff_dual_is_filter}, it follows that piecewise syndeticity is partition regular.

\subsection{The family of central sets and its dual}
\label{sec_char_of_central_star}

The algebra in \cref{sec_algebra_structure_of_ff} provides some new and interesting characterizations of the family of sets that have non-empty intersection with all central sets.  To get there, we begin by defining central sets and proving a folklore characterization of them in terms of dynamically central syndetic sets.
We will use ultrafilters in this subsection and the following ones; recall the preliminaries from \cref{sec_ultrafilters}.

Central sets are class of combinatorially rich sets introduced by Furstenberg \cite[Def. 8.3]{furstenberg_book_1981} to aid in treating problems in Ramsey Theory and additive combinatorics with tools from topological dynamics.  We define a set $A \subseteq S$ to be \emph{central} if there exists an $S$-system $X$, a point $x \in X$, and an open set $U \subseteq X$ containing a point that is both proximal to $x$ and $S$-uniformly recurrent for which $A = R(x,U)$.  We denote by $\central$ the collection of central subsets of $S$.

There are two striking facts about the collection of central sets that are not at all apparent from the definition: 1) $\central$ is a Furstenberg family (it is upward-closed), and 2) $\central$ is partition regular.  Both of these facts follow from a well-known characterization of central sets as precisely those sets which belong to some minimal, idempotent ultrafilter on $S$ \cite[Thm. 19.27]{hindman_strauss_book_2012}.

The family of central sets is closely related to the family of dynamically central syndetic sets.  A set $A \subseteq S$ is \emph{dynamically central syndetic} if there exists a minimal $S$-system $X$, a point $x \in X$, and an open set $U \subseteq X$ containing $x$ such that $R(x,U) \subseteq A$.
This is a family of syndetic subsets of $S$ that contain a global, highly structured subset; the name comes from \cite{glasscock_le_synd_idem_filters_2025}.
Dual to $\dcsyndetic$ is the family $\dcthick$, the family of \emph{sets of pointwise recurrence}, that is, those sets $A \subseteq S$ with the property that for all minimal $S$-systems $X$, all $x \in X$, and all open $U \subseteq X$ containing $x$, there exists $a \in A$ such that $ax \in U$.

We will have need of the following characterization of central sets, shown for $S = (\N,+)$ in \cite[Thm. 3.7]{Huang_Shao_Ye_2019}.
The result in greater generality is folklore; since we are not aware of a convenient reference, we give a complete proof.

\begin{theorem}
\label{thm_central_is_thick_classcap_dcsyndetic}
\lean{centralIsdcSCapThick}
\leanok{}
    Let $S$ be a semigroup.  A set $A \subseteq S$ is central if and only if it is the intersection of a dynamically central syndetic set and a thick set:
    \[\central = \dcsyndetic \classcap \thick.\]
\end{theorem}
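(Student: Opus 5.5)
We prove both containments, using the ultrafilter characterization that a set is central if and only if it belongs to some minimal idempotent ultrafilter \cite[Thm. 19.27]{hindman_strauss_book_2012}, together with the facts about the action of $\beta S$ on systems recalled in \cref{sec_ultrafilters}.

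For $\dcsyndetic \classcap \thick \subseteq \central$, let $A = D \cap H$ with $D \in \dcsyndetic$ and $H \in \thick$. Pick a minimal system $X$, a point $x \in X$ and an open $U \ni x$ with $R(x,U) \subseteq D$. By \cref{lemma_thick_closure_contains_strong_ultrafilter}, there is a minimal idempotent $p$ with $H \in p$. It remains to show that $R(x,U) \in p$; then $A \supseteq R(x,U) \cap H \in p$, so $A \in p$ and $A$ is central. The point $px$ is proximal to $x$ and lies in the minimal set $\overline{Sx}$. Since $p$ is idempotent, $p(px) = px$, but this alone does not give $px = x$, so more care is needed. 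In the minimal system $X$, choose a minimal idempotent $q$ in the same minimal left ideal $L$ as $p$ with $qx = x$. This is possible because $Lx = X$ is a closed, $L$-invariant set; the set $\{r \in L : rx = x\}$ is a non-empty closed subsemigroup, so it contains an idempotent by Ellis--Numakura. However, we need $p$ itself to contain $H$. The cleaner route is to use the left ideal $\{r : \forall s,\ s^{-1}H \in r\}$ from the proof of \cref{lemma_thick_closure_contains_strong_ultrafilter}. Intersect it with a minimal left ideal $L$ (this intersection contains $L$ whenever $L$ is contained in it, since it is itself a left ideal), and then take an idempotent in the closed subsemigroup $\{r \in L : rx = x\}$. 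Such an idempotent $q$ is minimal, satisfies $H \in q$ and $qx = x \in U$, and hence $R(x,U) \in q$. The main obstacle is exactly this step: producing a single minimal idempotent that fixes $x$ and contains $H$. The plan is to verify carefully that the set $\{r \in L : rx = x\}$ is non-empty (by minimality of $X$, $Lx$ is a subsystem, hence equals $X$) and closed (by continuity of $r \mapsto rx$).

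For $\central \subseteq \dcsyndetic \classcap \thick$, let $A \in p$ with $p$ a minimal idempotent. Take $X = \beta S$ (or the orbit closure $\overline{Sp} = \beta S\, p$, which is minimal because $p$ is minimal) and the point $x = p$. For the clopen set $U = \overline{A}$ we have $p \in U$, and
\[
R(p,\overline{A}) = \{s : A \in sp\} = \{s : s^{-1}A \in p\},
\]
which lies in $\dcsyndetic$. Set $H = A \cup (S \setminus R(p,\overline{A}))$. Then $R(p,\overline{A}) \cap H = A \cap R(p,\overline{A})$, so we must show that $A \subseteq R(p,\overline{A})$ after shrinking $A$, and that $H$ is thick. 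Replace $A$ by $A^\star = \{a \in A : a^{-1}A \in p\}$, which lies in $p$ since $p$ is idempotent; we then have $A^\star \subseteq R(p,\overline{A})$. It suffices to show that $H' = A^\star \cup (S \setminus R(p,\overline{A}))$ is thick, because $A \supseteq R(p,\overline{A}) \cap H'$ and $\dcsyndetic \classcap \thick$ is upward closed. Suppose instead that $S \setminus H' = R(p,\overline{A}) \setminus A^\star$ were syndetic. Then it would belong to a minimal left ideal containing $p$; more precisely, a syndetic set meets every minimal left ideal, so some $r \in \beta S\, p$ would contain it. We then derive a contradiction via $rp \cdot$ idempotent computations. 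This thickness verification is the second delicate point. As an alternative, I would invoke the standard fact that any member of a minimal idempotent is piecewise syndetic and, more precisely, that $A \in p$ implies $A \supseteq R(y,V) \cap H$ for some uniformly recurrent $y = p\,e$ in the system $\beta S$. This is the Huang--Shao--Ye argument, which I would transcribe to general $S$.
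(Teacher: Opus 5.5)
Your proposal is correct, with one step only sketched. One direction matches the paper and the other takes a different route.

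\textbf{Direction $\dcsyndetic \classcap \thick \subseteq \central$.} You do essentially what the paper does. The set $\{r \in \beta S : rx = x\}$ is exactly the paper's $\overline{\mathscr{F}}$, since by regularity $\bigcap_{V \ni x}\overline{V} = \{x\}$. The only difference is how non-emptiness is shown. You get $L \cap \{r : rx = x\} \neq \emptyset$ from $Lx = X$, which holds because $Lx$ is a subsystem of the minimal system $X$. The paper instead uses that syndetic sets meet every minimal left ideal, together with the finite intersection property. Your version is a bit shorter.

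One small fix is needed. A member of $\{r : \forall s,\ s^{-1}H \in r\}$ need not contain $H$ when $S$ has no identity. So justify $H \in q$ via $H \in qq = q$, using $\{s : s^{-1}H \in q\} = S$. Alternatively, take $L \subseteq \overline{H}$ directly, as the paper does.

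\textbf{Direction $\central \subseteq \dcsyndetic \classcap \thick$.} Here your route is genuinely different. The paper uses no ultrafilters in this direction. It writes $A = R(x,U)$ with $y \in U$ uniformly recurrent and proximal to $x$, and \cref{lemma_uniform_near_diag} gives $R(y,V) \cap R((x,y),\alpha) \subseteq A$.

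You instead work in the minimal system $\beta S\,p$. You take $B = R(p,\overline{A}) = \{s : s^{-1}A \in p\}$ and $H = A \cup (S \setminus B)$. Passing to $A^\star$ changes nothing, since $A^\star \cup (S \setminus B) = H$.

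The thickness of $H$, which you flagged and left open, takes one line:
\begin{itemize}
\item For $r = tp \in \beta S\,p$ we have $rp = tpp = r$.
\item By the definition of the product, $B \in r$ if and only if $A \in rp = r$.
\item Hence every $r \in \beta S\,p$ lies in $\overline{A} \cup \overline{S \setminus B} = \overline{H}$.
\item So $H$ is thick by \cite[Thm.~4.48]{hindman_strauss_book_2012}. Equivalently, a syndetic $B \setminus A$ would belong to some $r \in \beta S\,p$, a contradiction.
\end{itemize}
No appeal to Huang--Shao--Ye is needed.

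\textbf{Comparison.} Your route gives a canonical thick set attached to any member of a minimal idempotent. However, it uses \cite[Thm.~19.27]{hindman_strauss_book_2012} in both directions. The paper's argument needs only the dynamical definition here, so it invokes that theorem only once.
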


\begin{proof}
\leanok{}
    Suppose $A \in \central$ so that, according to the definition, there is an $S$-system $X$, points $x, y \in X$ with $x$ proximal to $y$, the point $y$ $S$-uniformly recurrent, and an open set $U \subseteq X$ containing $y$ such that $A = R(x,U)$.  By \cref{lemma_uniform_near_diag}, there exists an open set $\alpha \subseteq X^2$ containing the diagonal and an open set $V \subseteq X$ containing $y$ such that
    \[R(y,V) \cap R\big((x,y), \alpha\big) \subseteq R(x,U) = A.\]
    Because $y$ is $S$-uniformly recurrent and $y \in V$ and $V$ is open, the set $R(y,V)$ is dynamically central syndetic. Because $x$ and $y$ are proximal, the set $R\big((x,y), \alpha \big)$ is thick.  Therefore, we see that $A \in \dcsyndetic \classcap \thick$, as desired.

    Conversely, suppose $A \in \dcsyndetic \classcap \thick$.  We will prove that there exists a minimal idempotent ultrafilter $p \in \beta S$ such that $A \in p$.  It will follow from \cite[Thm. 19.27]{hindman_strauss_book_2012} that the set $A$ is central according to our definition above.

    There exists a minimal $S$-system $X$, a point $x \in X$, an open set $U \subseteq X$ containing $x$, and a thick set $H \subseteq S$ such that $A \supseteq R(x,U) \cap H$.  Define $\mathscr{F}$ to be the upward closure of all sets of the form $R(x,V)$, where $V \subseteq X$ is open and contains $x$.
    It is easy to see that $\mathscr{F}$ is a filter every member of which is syndetic.
    
    Define
    \[\overline{\mathscr{F}} \defeq \bigcap_{C \in \mathscr{F}} \overline{C}.\]
    We claim that $\overline{\mathscr{F}}$ is a compact subsemigroup of $\beta S$. Indeed, it is compact since it is closed and $\beta S$ is Hausdorff.  To see that it is a semigroup, suppose $p, q \in \overline{\mathscr{F}}$. To see that $pq \in \overline{\mathscr{F}}$, it suffices to show that for all open $V \subseteq X$ containing $x$, we have that $R(x, V) \in pq$.  Let $V \subseteq X$ be open and contain $x$.
    Note that since $q \in \overline{\mathscr{F}}$, we have that if $s \in R(x,V)$, then $R(x,s^{-1}V) \in q$, whereby
    \begin{align*}
        R(x,V) &\subseteq \big\{ s \in S \mid R(x,s^{-1}V) \in q \} \\
        &= \big\{ s \in S \mid s^{-1}R(x,V) \in q \}\\
        &= \big\{ s \in S \mid \{ t \in T \mid st \in R(x,V) \} \in q \}.
    \end{align*}
    Because $p \in \overline{\mathscr{F}}$, we have that $R(x,V) \in p$, so
    \[\big\{ s \in S \mid \{ t \in T \mid st \in R(x,V) \} \in q \} \in p,\]
    which is to say that $R(x, V) \in pq$, as desired.

    Since $H$ is thick, by \cite[Thm. 4.48]{hindman_strauss_book_2012}, there exists a minimal left ideal $L \subseteq \overline{H}$.
    We claim that the set $L \cap \overline{\mathscr{F}}$ contains an idempotent.
    We will use the fact that for all syndetic sets $A \subseteq S$ and all minimal left ideals $L \subseteq \beta \N$, we have $\overline{A} \cap L \neq \emptyset$ (\cite[Thm. 4.48]{hindman_strauss_book_2012}).  Since $L$ is a minimal left ideal, it is compact \cite[Thm. 2.6]{hindman_strauss_book_2012}.  Since $\mathscr{F}$ is a filter and every member is syndetic, the family
    \[\big\{\overline{C} \cap L  \mid  C \in \mathscr{F} \big\}\]
    has the finite intersection property.  Since $L$ is a compact subsemigroup, $\overline{\mathscr{F}} \cap L$ is a non-empty, compact subsemigroup. By \cite[Thm. 2.5]{hindman_strauss_book_2012}, there exists an idempotent $q \in L \cap \overline{\mathscr{F}}$, as claimed.

    Finally, note that
    \[q \in L \cap \overline{\mathscr{F}} \subseteq \overline{H} \cap \overline{R(x,U)},\]
    whereby the set $R(x,U) \cap H$ is contained in the minimal idempotent $q$, as was to be shown.
\end{proof}

From the characterization of central sets in \cref{thm_central_is_thick_classcap_dcsyndetic} and the DeMorgan-type laws in \cref{thm_de_morgan_v1}, we see that
    \begin{align}
        \label{eqn_central_star_formula}
        \central^* = \syndetic \classcapdual \dcthick.
    \end{align}
This leads immediately to the following two characterizations of $\central^*$ sets, both of which, we believe, are new.  The characterization in \cref{thm_chars_of_central_star} number 3. is a combinatorial relative and apparent strengthening of a classic theorem of Furstenberg \cite[Thm. 9.11]{furstenberg_book_1981}: a point $x$ in a $\Z$-system $X$ is $\central^*$-recurrent (that is, for all open $U \subseteq X$ containing $x$, the set $R(x,U) \in \central^*$) if and only if for all $\Z$-systems $Y$ and all uniformly recurrent points $y \in Y$, the point $(x,y)$ is uniformly recurrent in the product system $X \times Y$.

\begin{theorem}
\label{thm_chars_of_central_star}
\lean{cStarIffSetOfRecAlongAllThick}
\lean{cStarIffSyndeticAlongdcS}
\leanok{}
    Let $S$ be a semigroup and $A \subseteq S$.  The following are equivalent:
    \begin{enumerate}
        \item $A \in \central^*$;
        
        \item for all thick sets $H \subseteq S$, the set $A \cap H$ is a set of pointwise recurrence;

        \item for all minimal $S$-systems $X$, $x \in X$, and open $U \subseteq X$ containing $x$, the set $A \cap R(x,U)$ is syndetic.
    \end{enumerate}
\end{theorem}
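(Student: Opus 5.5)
The plan is to derive both characterizations from the formula \eqref{eqn_central_star_formula}, $\central^* = \syndetic \classcapdual \dcthick$. By commutativity of family meet (\cref{thm_meet_algebra_properties}) this can also be written as $\central^* = \dcthick \classcapdual \syndetic$. Each of the two orderings, once the definition of family meet is unwound, yields one of the statements 2 and 3.

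For 1 $\Leftrightarrow$ 2, we read the formula in the order $\syndetic \classcapdual \dcthick$. By definition of family meet, $A \in \syndetic \classcapdual \dcthick$ if and only if for every $B \in \syndetic^*$, the set $A \cap B$ belongs to $\dcthick$. Since $\syndetic^* = \thick$ by the definition of thick sets adopted in \cref{sec_ultrafilters}, this says precisely that $A \cap H$ is a set of pointwise recurrence for every thick $H$. That is statement 2.

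For 1 $\Leftrightarrow$ 3, we use the commuted form $\dcthick \classcapdual \syndetic$. We need $\dcthick^* = \dcsyndetic$, which follows from the involution property in \cref{thm_dual_main_properties}, since $\dcthick$ was defined as $\dcsyndetic^*$. So $A \in \central^*$ if and only if $A \cap B$ is syndetic for every dynamically central syndetic $B$.

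It remains to pass from arbitrary $B \in \dcsyndetic$ to sets of the form $R(x,U)$.
\begin{itemize}
\item If $A \cap B$ is syndetic for all $B \in \dcsyndetic$, then in particular $A \cap R(x,U)$ is syndetic. Indeed, $R(x,U) \in \dcsyndetic$ whenever $X$ is minimal and $U$ is an open set containing $x$.
\item Conversely, every $B \in \dcsyndetic$ contains some such $R(x,U)$. Syndeticity is upward closed, so if $A \cap R(x,U)$ is syndetic then so is $A \cap B$.
\end{itemize}
This gives statement 3.

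There is no real obstacle here: the whole content is the commutativity of family meet, which is nontrivial but already established in \cref{thm_meet_algebra_properties}, together with \cref{thm_central_is_thick_classcap_dcsyndetic}, which underlies \eqref{eqn_central_star_formula}. The only points needing care are confirming that $\dcthick$ and $\syndetic$ are genuine Furstenberg families, so that the algebra applies, and that the duals are identified correctly as $\syndetic^* = \thick$ and $\dcthick^* = \dcsyndetic$.
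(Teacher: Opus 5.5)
Your proposal is correct and matches the paper's proof: the paper also reads both characterizations off the formula $\central^* = \syndetic \classcapdual \dcthick$ by unwinding the definition of family meet and using its commutativity. Your reduction from arbitrary members of $\dcsyndetic$ to sets of the form $R(x,U)$ via upward closure of $\syndetic$ is a detail the paper leaves implicit.
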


\begin{proof}
\leanok{}
    These characterizations of sets belonging to $\central^*$ follow from \eqref{eqn_central_star_formula} and the definition and commutativity of the family meet operation in \cref{sec_family_meet}.
\end{proof}

\subsection{Piecewise--, strongly--, and and strongly piecewise-- constructs}
\label{sec_ps_and_strongps_families}

We demonstrate in this subsection how the algebra in \cref{sec_algebra_structure_of_ff} helps us understand and efficiently manipulate the ``piecewise--'', ``strongly--'', ``strongly piecewise--'', and ``very strongly piecewise--'' constructs appearing in ergodic Ramsey theory.  Throughout this section, let $\family$ be a Furstenberg family of subsets of $S$.

A set $A \subseteq S$ is called \emph{piecewise-$\family$} if there exists $B \in \family$ and a thick set $H \subseteq S$ such that $A = B \cap H$, that is,
\[\big\{ A \subseteq S \mid \text{$A$ is piecewise-$\family$}\big\} = \family \classcap \thick.\]
Piecewise syndetic sets -- intersections of syndetic sets and thick sets -- offer a first example. As far as we know, Bergelson, Furstenberg, and Weiss \cite[Thm. II]{bergelson_furstenberg_weiss_2006} were the first to make the construct explicit in this generality; they proved that $\Delta_0^*$ subsets of the positive integers are piecewise \bohrzero{}.
As another example, we see from \cref{thm_central_is_thick_classcap_dcsyndetic} that central sets are precisely the piecewise dynamically central syndetic sets: $\central = \dcsyndetic \classcap \thick$.\\

Dual to the ``piecewise--'' construct is the``strongly--'' construct.
We call a set $A \subseteq S$ \emph{strongly-$\family$} if for all thick sets $H \subseteq S$, the set $A \cap H$ is a member of $\family$, that is,
\[\big\{ A \subseteq S \mid \text{$A$ is strongly-$\family$}\big\} = \syndetic \classcapdual \family.\]
As an example, the family of thickly syndetic sets (defined in \cref{sec_ps_is_partition_reg}) is precisely the family of strongly thick sets: $\PS^* = \syndetic \classcapdual \thick$.  There are two simple but useful observations to make about strongly-$\family$ sets:
\begin{itemize}
    \item since the family meet operation is commutative (\cref{thm_meet_algebra_properties}), we have the following equivalent characterization: a set $A$ is strongly-$\family$ if and only if for all $B \in \family^*$, the set $A \cap B$ is syndetic; and

    \item since $(\family \classcap \thick)^* = \syndetic \classcapdual \family^*$ and $(\family^*)^* = \family$, we see that the family of piecewise-$\family$ sets is dual to the family of strongly-$\family^*$ sets.
\end{itemize}

The family of strongly central sets arises in the work of Bergelson, Hindman, and Strauss \cite{bergelson_hindman_strauss_2012}.
According to our definition above, a set is strongly central if it is central along every thick set.\footnote{A word of caution: Bergelson, Hindman, and Strauss' definition of ``strongly central'' does not match ours here.  They define a set $A \subseteq S$ to be \emph{strongly central} if $\overline{A}$ contains an idempotent ultrafilter in every left ideal of $\beta S$.}
It follows easily from \cref{thm_central_is_thick_classcap_dcsyndetic} that dynamically central syndetic sets are strongly central, that is, 
\begin{align}
    \label{thm_dcsyndetic_is_syndetic_ccdual_central}
    \lean{dcsIsSyndeticMeetCentral}
    \leanok{}
    \dcsyndetic \subseteq \syndetic \classcapdual \central,
\end{align}
but it is not hard to construct examples demonstrating that, in general, the containment is proper.
Bergelson, Hindman, and Strauss \cite[Question 2.3]{bergelson_hindman_strauss_2012} ask whether it is true that a set $A \subseteq S$ is strongly central if and only if all left ideals of $\beta S$ contain an idempotent ultrafilter containing $A$.
Utilizing the duality between ``piecewise--'' and ``strongly--'', we can quickly formulate the equivalent dual form of the question: is it true that a set $A \subseteq S$ is piecewise-$\central^*$ if and only if there exists a left ideal of $\beta S$ every idempotent of which contains $A$? We have no progress to report on this question. \\

Combining the ``piecewise--'' and ``strongly--'' constructs, we call a set $A \subseteq S$ \emph{strongly piecewise-$\family$} if for all thick sets $H \subseteq S$, there exists $B \in \family$ and a thick set $H' \subseteq S$ such that $A \cap H = B \cap H'$, that is,
\[\big\{ A \subseteq S \mid \text{$A$ is strongly piecewise-$\family$}\big\} = \syndetic \classcapdual (\family \classcap \thick).\]
Thus, as the name suggests, ``strongly piecewise--'' is the composition of ``strongly--'' and ``piecewise--''.
We give a characterization of $\central^*$ sets in countable, commutative semigroups in \cref{sec_strong_ip_is_strong_central} using this construct.

A closely-related natural strengthening is the following: we call a set $A \subseteq S$ \emph{very strongly piecewise-$\family$} if for all thick sets $H \subseteq S$, there exists $B \in \family$ and a thick set $H' \subseteq H$ such that $A \cap H' = B \cap H'$. Denoting by $\thick_{\cap H}$ the family of sets $C \subseteq S$ for which $C \cap H \in \thick$, it follows from \cref{lemma_desc_of_very_strong} that
\[\big\{ A \subseteq S \mid \text{$A$ is very strongly piecewise-$\family$}\big\} = \bigcap_{H \in \thick} \big( \thick_{\cap H} \classcap \family \big).\]
It is immediately clear from the definitions that, as the nomenclature suggests, very strongly piecewise-$\family$ sets are strongly piecewise-$\family$.
\cref{mainthm_gen_of_bfw} is conveniently formulated in this language: \emph{$\Delta^*$ sets are very strongly piecewise \bohrzero{}}.
As far as we know, Host and Kra \cite{host_kra_2011} were the first to explicitly define this construct;\footnote{Another word of caution: Host and Kra's definition of ``strongly piecewise-$\family$'' does not match ours here.  Instead, what they define is what we call very strongly piecewise-$\family$.} they proved that $\SG^*_{d}$ subsets of $\Z$ -- a family of combinatorially rich sets generalizing $\Delta^*$ sets defined at the top of \cref{sec_open_directions} -- are very strongly piecewise nil$_d$-\bohrzero{}.
We use this terminology in \cref{thm_strong_piecewise_char_of_central_star} to characterize $\central^*$ sets in countable, commutative semigroups.

In some cases, the families of strongly piecewise-$\family$ sets and very strongly piecewise-$\family$ sets coincide.
We give one such case in \cref{thm_case_when_sp_is_vsp} below, proved with the help of the following lemma which we make separate use of later on.

\begin{lemma}
\label{lemma_thick_conclusion}
\lean{thickIndicatorLemmaInCountCommSemi}
\leanok{}
    Let $S$ be a countable, commutative semigroup, and let $H \subseteq S$ be thick.  There exists a thick set $G \subseteq H$ such that for all $T \subseteq S$ and all left ideals $L \subseteq \beta S$, if $L \subseteq \overline{T}$ and $L \cap \overline{G} \neq \emptyset$, then the set $T \cap H$ is thick.
\end{lemma}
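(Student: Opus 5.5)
The plan is to build $G$ as a union of shifted blocks of $H$ whose shifts leave room inside $H$. Then any ultrafilter $p \in L \cap \overline{G}$ will produce, for each finite $F$, an element $g$ with $Fg \subseteq T \cap H$.

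\emph{Construction of $G$.} Since $S$ is countable, I fix finite sets $F_1 \subseteq F_2 \subseteq \cdots$ with $\bigcup_n F_n = S$. Each $F_nF_n$ is finite and $H$ is thick, so I choose $s_n \in S$ with $F_nF_n s_n \subseteq H$ (commutativity means left and right shifts agree). Put
\[G \defeq \bigcup_{n} F_n s_n .\]
Then $G \subseteq H$, since $F_n s_n \subseteq F_nF_ns_n$ (any $a\in F_n$ gives $as_n \in a F_n s_n$, and $F_n \subseteq F_n F_n$ need not hold; so more carefully I choose $s_n$ with $(F_n \cup F_nF_n)s_n \subseteq H$). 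Also $G$ is thick: given a finite $F$, pick $n$ with $F \subseteq F_n$, and then $Fs_n \subseteq G$.

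\emph{Using the left ideal.} Let $T \subseteq S$ and let $L$ be a left ideal with $L \subseteq \overline{T}$ and $p \in L \cap \overline{G}$. For every $s \in S$ we have $sp \in \beta S\, p \subseteq L \subseteq \overline{T}$, so $T \in sp$, which means $s^{-1}T \in p$. Since $p$ is a filter, for every finite $F$,
\[G \cap \bigcap_{f \in F} f^{-1}T \in p.\]
Fix a finite $F$; we must find $t$ with $Ft \subseteq T \cap H$. I split into two cases according to whether $p$ is principal.

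\emph{Case 1: $p$ is nonprincipal.} Choose $N$ with $F \subseteq F_N$. The set $K = \bigcup_{n<N} F_ns_n$ is finite, so $S\setminus K \in p$. Hence there is $g \in G \setminus K$ with $Fg \subseteq T$. Write $g = as_n$ with $n \geq N$ and $a \in F_n$. Then $Fg \subseteq F_nF_ns_n \subseteq H$, so $t=g$ works.

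\emph{Case 2: $p$ is principal at some $x$.} The relation $T \in sp$ for all $s$ says $Sx \subseteq T$. Since $H$ is thick, choose $u$ with $(Fx)u \subseteq H$ and set $t = xu$. Then $Ft \subseteq H$, and $Ft = (Fu)x \subseteq Sx \subseteq T$ by commutativity.

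In both cases $T\cap H$ is thick.

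The main obstacle is Case 1's bookkeeping. A naive choice $G=\bigcup_n F_ns_n$ with only $F_ns_n\subseteq H$ fails, because $g = as_n$ with $Fa \not\subseteq F_n$ gives no control over $Fg$. This is why I pad to $F_nF_n s_n \subseteq H$ and discard finitely many blocks. The possibility that $p$ is principal (for instance if $S$ has a zero) is the only other subtlety, and Case 2 handles it. Countability is used only to exhaust $S$ by the sequence $F_n$.
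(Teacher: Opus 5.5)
Your proof is correct and follows the paper's argument essentially verbatim: the same set $G=\bigcup_n F_n s_n$ with $(F_n\cup F_nF_n)s_n\subseteq H$, the same principal/non-principal case split, and the same use of $L$ being a left ideal to get $f^{-1}T\in p$. The differences are cosmetic. In the non-principal case you discard the finitely many early blocks directly, rather than first proving that $G\setminus s^{-1}H$ is finite for each $s$. In the principal case you exhibit $t=xu$ explicitly, instead of invoking the shift lemmas for thick sets.
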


\begin{proof}
\leanok{}
    Enumerate $S$ as $s_1, s_2, \ldots$.  For $n \in \N$, define $F_n = \{s_1, \ldots, s_n\}$.  For all $n \in \N$, there exists $t_n \in S$ such that $\big(F_n \cup (F_n F_n) \big) t_n \subseteq H$.  Define
    \[G = \bigcup_{n=1}^\infty F_n t_n.\]
    That $G$ is thick and that $G \subseteq H$ both follow immediately from its definition.
    We claim that for all $s \in S$, the set $G \setminus s^{-1} H$ is finite.  Indeed, let $s \in S$.  There exists $N \in \N$ such that for all $n \geq N$, $s \in F_n$.  Therefore, for all $n \geq N$, $s F_n t_n \subseteq H$, whereby $F_n t_n \subseteq s^{-1} H$.  It follows that
    \[G \setminus s^{-1} H \subseteq \bigcup_{n = 1}^{N-1} F_n t_n,\]
    which is a finite set.

    Let $T \subseteq S$ and $L \subseteq \beta S$ be a left ideal such that there exists some $p \in L \cap \overline{G}$ and $L \subseteq \overline{T}$.  We must show that $T \cap H$ is thick.  We consider two cases depending on whether $p$ is principal or non-principal.

    Case 1: the ultrafilter $p$ is the principal ultrafilter at some element $s \in S$. Since $p \in L$ and $L$ is a left ideal, we have that $\beta S p \subseteq L \subseteq \overline{T}$.  It follows that $S s \subseteq T$.

    Since $H$ is thick, by \cref{lemma_thick_inverse_shift_intersections}, the set $s^{-1} H$ is thick.  By \cref{lemma_thick_right_shift_is_thick} and the fact that $S$ is commutative, the set $s s^{-1} H \subseteq s S \cap H$ is thick.  But $s S = S s \subseteq T$, and so we see that $T \cap H$ is thick, as was to be shown.

    Case 2: the ultrafilter $p$ is non-principal. Let $F \subseteq S$ be finite. Note that since $p$ is non-principal and $G \in p$ and for all $s \in S$, $G \setminus s^{-1} H$ is finite, we have that for all $s \in S$, $s^{-1} H \in p$.  It follows that
    \[\bigcap_{f \in F}f^{-1} H \in p.\]
    Since $\overline{T} \supseteq L$ and $L$ is a left ideal,
    \[\bigcap_{f \in F} \overline{f^{-1} T} \supseteq L \ni p.\]
    We see that
    \[\bigcap_{f \in F} f^{-1} (T \cap H) = \bigcap_{f \in F}f^{-1} H \cap \bigcap_{f \in F} f^{-1} T \in p,\]
    whereby $\cap_{f \in F} f^{-1} (T \cap H)$ is non-empty, as was to be shown.
\end{proof}

\begin{theorem}
\label{thm_case_when_sp_is_vsp}
\lean{inCommCountSemiStrongPWandVeryStrongPWCoincide}
    Let $S$ be a countable, commutative semigroup, and let $\family$ be a family of subsets of $S$ every member of which is syndetic.  The families of strongly piecewise-$\family$ sets and very strongly piecewise-$\family$ sets coincide:
    \[\syndetic \classcapdual (\family \classcap \thick) = \bigcap_{H \in \thick} \big( \thick_{\cap H} \classcap \family \big).\]
\end{theorem}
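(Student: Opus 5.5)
The proof has two inclusions. The inclusion from very strongly piecewise-$\family$ to strongly piecewise-$\family$ is the one the paper calls immediately clear, and it is a variant of the upward-closure trick in the proof of \cref{lemma_desc_of_very_strong}. Suppose $H$ is thick and $H' \subseteq H$ is thick with $A \cap H' = B \cap H'$ for some $B \in \family$. Put $C = (A \cap H) \setminus H'$. One checks directly that
\[A \cap H = (H' \cup C) \cap (B \cup C).\]
Here $H' \cup C$ is thick because it contains $H'$, and $B \cup C \in \family$ because it contains $B$. Hence $A \cap H \in \family \classcap \thick$. Since $H$ was arbitrary, $A \in \syndetic \classcapdual (\family \classcap \thick)$.

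For the reverse inclusion, let $A$ be strongly piecewise-$\family$ and let $H$ be thick. The key observation is that if $A \cap H = B \cap K$ with $B \in \family$ and $K$ thick, then $H' \defeq K \cap H$ already works, provided it is thick. Indeed $B \cap K \subseteq H$, so
\[A \cap H' = (A \cap H) \cap K = B \cap K = B \cap H'.\]
The same computation works with $H$ replaced by any thick $G \subseteq H$: from $A \cap G = B \cap K$ we get $A \cap (K \cap G) = B \cap (K \cap G)$, and $K \cap G \subseteq H$. So everything reduces to producing a representation whose thick part $K$ has thick intersection with the set being represented.

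This is where \cref{lemma_thick_conclusion} and the syndeticity of members of $\family$ enter. Let $G \subseteq H$ be the thick set provided by \cref{lemma_thick_conclusion}, and apply the hypothesis to $G$ to get $A \cap G = B \cap K$ with $B \in \family$ and $K$ thick. By \cite[Thm. 4.48]{hindman_strauss_book_2012}, there is a minimal left ideal $L \subseteq \overline{K}$. Since $B$ is syndetic, $\overline{B} \cap L$ contains some $p$. Then $B \cap K \in p$, and since $B \cap K = A \cap G \subseteq G$, we get $p \in \overline{G}$. Thus $L \subseteq \overline{K}$ and $L \cap \overline{G} \neq \emptyset$, so \cref{lemma_thick_conclusion} gives that $K \cap H$ is thick.

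The main obstacle is the mismatch this leaves. \cref{lemma_thick_conclusion} delivers thickness of $K \cap H$, whereas the agreement $A = B$ is only known on $K \cap G$. To close the gap I would first try to upgrade the conclusion to thickness of $K \cap G$ itself. The plan is to rerun Case 2 of the proof of \cref{lemma_thick_conclusion}, using that $p$ is a limit of points of $B \cap K \subseteq G$ and that $G \setminus s^{-1}H$ is finite for every $s$. Alternatively, I would iterate the lemma, applying it once to $H$ and once to $G$, so that the thickness conclusion lands on a set on which the representation is known. As a fallback, I would apply the hypothesis to both $H$ and $G$ and intersect the resulting thick sets. In that case the left ideal $L$ has to be chosen inside the closure of the intersection, which requires checking that this intersection is itself thick.
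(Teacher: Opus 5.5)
There is a genuine gap. Your first inclusion is correct. The first half of the reverse inclusion also matches the paper's proof: you choose $G$ via \cref{lemma_thick_conclusion}, write $A \cap G = B \cap K$, and use syndeticity of $B$ to get $L \cap \overline{G} \neq \emptyset$ and hence $K \cap H$ thick. The proposal then stops where the proof still has to be finished. You correctly observe that $A$ and $B$ need not agree on $K \cap H$: indeed $B \cap K \cap H = A \cap G$, while $A \cap K \cap H$ may also contain points of $K \cap (H \setminus G)$, none of which lie in $B$. None of your three remedies is carried out, and none is clearly viable:
\begin{itemize}
\item Rerunning Case 2 of \cref{lemma_thick_conclusion} to get $K \cap G$ thick would require $f^{-1}G \in p$. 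What drives that case is finiteness of $G \setminus s^{-1}H$, not of $G \setminus s^{-1}G$. The latter need not be finite for $G = \bigcup_n F_n t_n$; in $\N$, take the right endpoints of separated blocks.
\item Iterating the lemma just reproduces the same one-level mismatch between the set where thickness is known and the set where agreement is known.
\item The fallback rests on an intersection of two thick sets being thick, which fails in general.
\end{itemize}

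The missing idea is that you need not keep $B$. Because $\family$ is upward closed, you can enlarge it. Put $B' = B \cup (A \cap K) \in \family$; this is the paper's choice, and $B \cup A$ works equally well. With $H' = K \cap H$,
\[B' \cap H' = (B \cap K \cap H) \cup (A \cap K \cap H) = A \cap H',\]
since $B \cap K \cap H = A \cap G \subseteq A \cap K \cap H$. This inclusion uses $A \cap G = B \cap K \subseteq K$ and $G \subseteq H$. It is the same upward-closure device you already used in your first inclusion, and with it your argument becomes exactly the paper's proof.
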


\begin{proof}
    It is immediate from the definitions that very strongly piecewise-$\family$ sets are strongly piecewise-$\family$, so we need only show the reverse containment.

    Let $A \subseteq S$ be strongly piecewise-$\family$, and let $H \subseteq S$ be thick.  Let $G \subseteq H$ be a thick set as guaranteed by \cref{lemma_thick_conclusion}.  Since $A$ is strongly piecewise-$\family$, there exists $B \in \family$ and a thick set $T \subseteq S$ such that $A \cap G = B \cap T$.
    Put $B' = B \cup (A \cap T)$, and note that since $\family$ is upward-closed, $B' \in \family$.  We will find a thick set $H' \subseteq H$ such that $A \cap H' = B' \cap H'$.

    Since $T$ is thick, there exists a left ideal $L \subseteq \overline{T}$.  We claim that $L \cap \overline{G} \neq \emptyset$.  Indeed, since $B$ is syndetic, $L \cap \overline{B} \neq \emptyset$.  Then $L \cap \overline{B} \subseteq \overline{T} \cap \overline{B} = \overline{A} \cap \overline{G} \subseteq \overline{G}$.  Therefore, $L \cap \overline{G} \neq \emptyset$.
    We conclude from \cref{lemma_thick_conclusion} that the set $H' \defeq T \cap H$ is thick.
    
    Finally, we claim that
    \[A \cap H' = B' \cap H'.\]
    Expanding the definitions of $H'$ and $B'$, we must show that
    \[A \cap T \cap H = (B \cup (A \cap T)) \cap T \cap H = (B \cap T \cap H) \cup (A \cap T \cap H).\]
    The left  side is clearly a subset of the right.  The right side is a subset of the left since $B \cap T \cap H = A \cap G \cap T \cap H \subseteq A \cap T \cap H$.
\end{proof}

We were not able to determine reasonable necessary and sufficient conditions on a semigroup for the families of strongly piecewise-$\family$ sets and very strongly piecewise-$\family$ sets to coincide.

\section{The local dynamical structure of difference sets}
\label{sec_local_dy_struct}

In this section, we prove Theorems \ref{mainthm_gen_of_bfw} and \ref{mainthm_gen_of_bfw_reformulated}.
The first step (\cref{sec_reduct_to_ur_sets}) is to reduce the problem to a special class of sets arising from minimal dynamics
The second (\cref{sec_diff_and_bz_sets}) is to connect sets of Bohr recurrence and $\Delta$ sets via the regionally proximal relation.

\subsection{Reduction to dynamically syndetic sets}
\label{sec_reduct_to_ur_sets}

The goal in this subsection is \cref{thm_lspwf_upgrade}, which shows that to prove \cref{mainthm_gen_of_bfw_reformulated_stronger}, it suffices to show a related-but-stronger statement for a special class of sets called ``uniformly recurrent'' sets.
This class of sets is defined in the language of symbolic dynamics.

Let $S$ be a (discrete) semigroup, and let $\{0,1\}^S$ be the space of functions $S \to \{0,1\}$ endowed with the product topology.  The semigroup $S$ acts on $\{0,1\}^S$ by continuous maps defined for $s \in S$ and $\omega \in \{0,1\}^S$ by
\[\forall \ t \in S, \ (s \omega)(t) = \omega(ts).\]
The compact Hausdorff space $\{0,1\}^S$ thus becomes an $S$-system. For $s \in S$ and $a \in \{0,1\}$, the cylinder set $[a]_s$ is defined as
\[[a]_s 
\defeq \{\omega \in \{0,1\}^{S} : \omega(s) = a\}.\]

\begin{definition}
    \label{def_ur_set}
    \lean{isURSet}
    \leanok{}
    Let $S$ be a semigroup.  A set $A \subseteq S$ is a \emph{UR set} if $1_A \in \{0,1\}^S$ is $S$-uniformly recurrent.  (Note that the empty set is a UR set.)
\end{definition}

In the following proof, we make use of the monoid extension of a semigroup $S$.
As a set, we define $M_S$ to be $S$ together with a new element.
The set $M_S$ becomes a semigroup by defining the multiplication on it so as to extend the multiplication of $S$ and make the new element the identity element.
Thus, given any semigroup, there exists a semigroup $M_S$ with identity (that is, a monoid) and an injective semigroup homomorphism $i: S \to M_S$.  

\begin{lemma}
    \label{lemma_unif_rec_implies_UR}
    \lean{urSetIsRxU}
    \leanok{}
    Let $S$ be a semigroup, and let $A \subseteq S$.  If $A$ is an UR set, then there exists a minimal $S$-system $X$, a point $x \in X$, and a clopen set $U \subseteq X$ such that $A = R(x,U)$.
\end{lemma}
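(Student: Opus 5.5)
The plan is to realize $A$ as a visit-time set in a shift over the monoid extension $M_S$, where the identity coordinate reads off membership in $A$. The natural first attempt is $X = \overline{S 1_A} \subseteq \{0,1\}^S$ with $x = 1_A$. This does not work directly. We have $(s1_A)(t) = 1_A(ts)$ for $t \in S$, so recovering ``$s \in A$'' from $s 1_A$ would require evaluating at an identity element, which $S$ may lack. This is exactly why the monoid extension $M_S$ was introduced.

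Let $e$ denote the identity of $M_S$. Let $S$ act on $\{0,1\}^{M_S}$ by $(s\omega)(t) = \omega(ts)$ for $t \in M_S$. This is an action by continuous maps, because $t(ss') = (ts)s'$ in $M_S$. Consider the restriction map $\pi : \{0,1\}^{M_S} \to \{0,1\}^S$, $\pi\omega = \restr{\omega}{S}$. It is continuous and surjective.

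It is also $S$-equivariant. For $t \in S$ we have $ts \in S$, so
\[\pi(s\omega)(t) = \omega(ts) = (\pi\omega)(ts) = (s\,\pi\omega)(t).\]
Hence $\pi$ is a factor map of $S$-systems.

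Since $1_A$ is $S$-uniformly recurrent by hypothesis, \cref{thm_ur_lift} provides an $S$-uniformly recurrent point $\omega \in \{0,1\}^{M_S}$ with $\pi\omega = 1_A$. In other words, $\omega(s) = 1_A(s)$ for all $s \in S$, while $\omega(e)$ is whatever the lift happens to give. This is the key trick. Choosing $\omega(e)$ by hand would jeopardize uniform recurrence, and the lifting theorem sidesteps that issue.

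Now put $X = \overline{S\omega}$, which is a minimal $S$-system because $\omega$ is uniformly recurrent. Put $x = \omega$ and $U = X \cap [1]_e$, which is clopen in $X$ since cylinder sets are clopen. For $s \in S$ we compute
\[s \in R(x,U) \iff (s\omega)(e) = 1 \iff \omega(es) = \omega(s) = 1 \iff s \in A,\]
so $R(x,U) = A$.

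The empty-set case requires no special treatment. If $A = \emptyset$, the argument still gives $R(x,U) = \emptyset$ regardless of whether $U$ is empty.

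The only real obstacle is the missing identity, and the lifting step is the one point needing care. Everything else consists of routine checks that the shift over $M_S$ is an $S$-system and that $\pi$ is equivariant.
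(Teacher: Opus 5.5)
Your proposal is correct and follows essentially the same route as the paper. You pass to the monoid extension $M_S$, use restriction $\{0,1\}^{M_S} \to \{0,1\}^S$ as a factor map of $S$-systems, lift $1_A$ to an $S$-uniformly recurrent $\omega$ via \cref{thm_ur_lift}, and then read off $A = R(\omega, [1]_e \cap \overline{S\omega})$ from the identity coordinate, exactly as the paper does.
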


\begin{proof}
    Let $i: S \to M_S$ be the injective homomorphism into the monoid extension. Via this homomorphism, we can see $\{0,1\}^{M_S}$ as an $S$-system (cf. \cref{sec_hom_systems}).  Moreover, the map $\pi: \{0,1\}^{M_S} \to \{0,1\}^S$ gotten by sending the function $\omega: M_S \to \{0,1\}$ to the function $\omega \circ i: S \to \{0,1\}$ is a factor map of $S$-systems. Indeed, we must show that it is continuous, surjective, and intertwines the actions of $S$: for all $s \in S$, $(s \omega) \circ i = s (\omega \circ i)$.
    
    That $\pi$ is continuous follows from the fact that the preimage of an intersection of cylinder sets is a cylinder set.  That $\pi$ is surjective follows from the fact that $i: S \to M_S$ is injective.  To see that $\pi$ intertwines the actions of $S$, we see that for all $t \in S$,
    \[\big((s \omega) \circ i\big)(t) = (s \omega) (it) = \omega (it is) = \omega (i (ts)) = (\omega \circ i)(ts) = \big(s(\omega \circ i)\big)(t).\]

    By assumption, $1_A \in \{0,1\}^S$ is $S$-uniformly recurrent.  By \cref{thm_ur_lift}, there exists $\omega \in \{0,1\}^{M_S}$ that is $S$-uniformly recurrent and such that $\pi \omega = 1_A$.
    We claim that $A = R_S(\omega, [1]_{e_{M_S}})$, which concludes the proof: we define $X = \overline{S \omega}$, $x = \omega$, and $U = [1]_{e_{M_S}} \cap X$.  Indeed, using that $\omega \circ i = 1_A$, we see $s \in R_S(\omega, [1]_{e_{M_S}})$ if and only if $s\omega \in [1]_{e_{M_S}}$ if and only if $(s \omega)(e_{M_S}) = 1$ if and only if $\omega(is) = 1$ if and only if $1_{A} (s) = 1$ if and only if $s \in A$.
\end{proof}

Our next step is \cref{lem:auslander-ellis}, a take on a famous theorem of Auslander and Ellis which says that every point is a topological dynamical system is proximal to a uniformly recurrent point.
The proof of \cref{lem:auslander-ellis} is best accomplished using ultrafilters, the necessary preliminaries for which are covered in \cref{sec_ultrafilters}.

\begin{lemma}
\label{lem:auslander-ellis}
\lean{syndSetIsUROnThickSet}
\leanok{}
Let $S$ be a semigroup and $A \subseteq S$.  For all thick sets $H \subseteq S$, there exists a UR set $B \subseteq S$ and a thick set $H' \subseteq H$ such that $A \cap H' = B \cap H'$.
\end{lemma}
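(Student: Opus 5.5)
We work in the shift system $\{0,1\}^S$ with the point $\omega \defeq 1_A$, and use a minimal idempotent ultrafilter adapted to $H$. By \cref{lemma_thick_closure_contains_strong_ultrafilter}, there is a minimal idempotent $p \in \beta S$ such that $\cap_{f \in F} f^{-1} H \in p$ for every finite $F \subseteq S$. Set $\eta \defeq p\omega \in \{0,1\}^S$. Since $p$ is minimal, $\eta$ is $S$-uniformly recurrent. Since $p$ is idempotent, $\eta$ is proximal to $\omega$. Every element of $\{0,1\}^S$ is the indicator of some set, so $\eta = 1_B$ for some $B \subseteq S$, and $B$ is a UR set by \cref{def_ur_set}.

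The key step is to show that $B$ agrees with $A$ on a thick subset of $H$. Unwinding the action, $(s\omega)(t) = 1_A(ts)$, and the limit along $p$ is coordinatewise. Hence for each $t \in S$,
\[
\{ s \in S : 1_A(ts) = 1_B(t) \} \in p .
\]
For finite $F \subseteq S$, intersecting over $t \in F$ (which is allowed since $p$ is a filter) gives
\[
E_F \defeq \{ s \in S : \forall t \in F,\ 1_A(ts) = 1_B(t) \} \in p .
\]
Intersecting further with $\cap_{t \in F} t^{-1} H \in p$, we get $s_F \in S$ with $Fs_F \subseteq H$ and $1_A(ts_F) = 1_B(t)$ for all $t \in F$.

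This is still not agreement on a common set: it compares $A$ at $ts_F$ with $B$ at $t$, which are shifted relative to each other. I would therefore use idempotence once more, writing $p = pp$, so that $\eta = p\eta$. Then for each $t$,
\[
\{ s : 1_B(ts) = 1_B(t) \} \in p .
\]
Consequently, for $s \in E_F \cap \{ s : \forall t \in F,\ 1_B(ts) = 1_B(t)\} \cap \bigcap_{t \in F} t^{-1}H \in p$, we have $1_A(ts) = 1_B(t) = 1_B(ts)$ for all $t \in F$, with $Fs \subseteq H$. So for each finite $F$ we obtain some $s_F$ such that $Fs_F \subseteq H$ and $A$ and $B$ agree on $Fs_F$.

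Define $H' \defeq \bigcup_{F} F s_F$, the union over all finite $F \subseteq S$. Then $H' \subseteq H$. It is thick by the customary definition, since for each finite $F$ the translate $Fs_F$ lies in $H'$. On each piece $Fs_F$ the sets $A$ and $B$ coincide, so they coincide on the union, giving $A \cap H' = B \cap H'$.

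The main obstacle is the step above: getting agreement on a genuinely common set rather than a shifted comparison. That is handled by using $p\eta = \eta$, equivalently the proximality of $\omega$ and $\eta$ realized along $p$. What remains is to check the coordinatewise description of $p\omega$ against the ultrafilter-limit definition in \cref{sec_ultrafilters}, and that $(pq)\omega = p(q\omega)$ gives $p\eta = \eta$.
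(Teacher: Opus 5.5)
Your proof is correct and follows essentially the same route as the paper: take the minimal idempotent $p$ from \cref{lemma_thick_closure_contains_strong_ultrafilter}, let $1_B$ be $p$ applied to an indicator function, and use $pp = p$ together with $\cap_{t \in F} t^{-1}H \in p$ to find, for each finite $F$, a translate $Fs \subseteq H$ on which $A$ and $B$ agree. The differences are cosmetic. The paper applies $p$ to $1_{A \cap H}$ rather than $1_A$, and phrases the agreement as the pair $(1_B, 1_{A\cap H})$ returning to a neighborhood of the diagonal along a $p$-large set. It also takes $H'$ to be $H$ intersected with the full agreement set $\{g : 1_B(g) = 1_{A\cap H}(g)\}$, rather than your union of chosen translates $Fs_F$.
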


\begin{proof}
\leanok{}
    Since $H$ is thick, by \cref{lemma_thick_closure_contains_strong_ultrafilter}, there exists a minimal, idempotent ultrafilter $p \in \overline{H} \subseteq \beta S$ such that for all finite $F \subseteq S$, $\cap_{f \in F} f^{-1} H \in p$. Let $1_B = p 1_{A \cap H} \in \{0,1\}^S$.  Since $p$ is minimal we have that $1_B$ is $S$-uniformly recurrent.
    Define $T = \{g \in S \mid 1_B(g) = 1_{A \cap H}(g) \}$.
    
    We claim that the set $H \cap T$ is thick.  Indeed, let $F \subseteq S$ be finite. The set
    \[U \defeq \{(\alpha, \beta) \in (\{0,1\}^S)^2 \mid \forall f \in F, \ \alpha(f) = \beta (f)\} \subseteq (\{0,1\}^S)^2\]
    is, by the definition of the product topology on $\{0,1\}^S$, an open set containing the diagonal.  Since $p = pp$, we see
    \[p 1_B = p p 1_{A \cap H} = p 1_{A \cap H}.\]
    Therefore, we have that $R_S((1_B,1_{A \cap H}), U) \in p$ in the product space $(\{0,1\}^S)^2$ under the diagonal action.  By the property of $p$, we see that $\bigcap_{f \in F} f^{-1} H \in p$.  Since $p$ is a filter,
    \[R_S\big((1_B,1_{A \cap H}), U\big) \cap \bigcap_{f \in F} f^{-1} H \in p.\]
    Choose $g$ in this set.  For all $f \in F$, we have that $fg \in H$ and
    \[1_B(fg) = (g1_B)(f) = (g1_{A \cap H})(f) = 1_{A \cap H}(fg).\]
    Therefore, $Fg \subseteq H \cap T$, as desired.
    
    Set $H' = H \cap T$.  Then $H' \subseteq H$ is thick and $1_B$ and $1_{A \cap H}$ agree on $H'$, that is, $A \cap H' = B \cap H'$, as desired.
\end{proof}

In the following theorem, recall that we denote by $\thick_{\cap H}$ the family of sets $C \subseteq S$ for which $C \cap H \in \thick$.

\begin{theorem}
\label{thm_lspwf_upgrade}
\lean{urContainmentSufficesForFamilyContainment}
\lean{urContainmentSufficesForFamilyContainmentUpgrade}
    Let $S$ be a semigroup, and let $\family$ and $\familytwo$ be families of subsets of $S$. If for all thick $H \subseteq S$, 
    \begin{align}
        \label{eqn_fam_containment_for_ur_sets}
        \big((\thick_{\cap H})^* \classcapdual \family\big) \cap \big\{C \subseteq S : \text{$C$ is a UR set} \big\} \subseteq (\thick_{\cap H})^* \classcapdual \familytwo,
    \end{align}
    then
    \[\bigcup_{H \in \thick} \big( (\thick_{\cap H})^* \classcapdual \family \big) \subseteq \bigcup_{H \in \thick} \big( (\thick_{\cap H})^* \classcapdual \familytwo \big).\]
\end{theorem}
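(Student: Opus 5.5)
The plan is to pass from $A$ to a UR set that agrees with $A$ on a thick subset of the witnessing thick set. On that thick subset the hypothesis \eqref{eqn_fam_containment_for_ur_sets} applies directly to the UR set, and the conclusion then transfers back to $A$. Throughout I read membership in $(\thick_{\cap H})^* \classcapdual \family$ via \cref{lemma_desc_of_very_strong}, part 2, applied with the family of thick sets in the role of $\familytwo$ there: $C \in (\thick_{\cap H})^* \classcapdual \family$ if and only if $C \cap G \in \family$ for all thick $G \subseteq H$.

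First, let $A$ belong to the left-hand union. Then there is a thick $H \subseteq S$ such that $A \cap G \in \family$ for every thick $G \subseteq H$. Apply \cref{lem:auslander-ellis} to $A$ and $H$. This gives a UR set $B \subseteq S$ and a thick set $H_1 \subseteq H$ with $A \cap H_1 = B \cap H_1$.

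Next, I check that $B \in (\thick_{\cap H_1})^* \classcapdual \family$. Take any thick $G \subseteq H_1$. Since $G \subseteq H_1$, we get $B \cap G = B \cap H_1 \cap G = A \cap H_1 \cap G = A \cap G$. Because $G \subseteq H_1 \subseteq H$ is thick, $A \cap G \in \family$, and hence $B \cap G \in \family$. Now $B$ is a UR set, so the hypothesis \eqref{eqn_fam_containment_for_ur_sets}, applied with the thick set $H_1$, gives $B \in (\thick_{\cap H_1})^* \classcapdual \familytwo$. That is, $B \cap G \in \familytwo$ for all thick $G \subseteq H_1$.

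Finally, the same identity $A \cap G = B \cap G$ for thick $G \subseteq H_1$ gives $A \cap G \in \familytwo$ for all such $G$. So $A \in (\thick_{\cap H_1})^* \classcapdual \familytwo$, and in particular $A$ lies in the right-hand union, witnessed by $H_1$.

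I do not expect a real obstacle; the substantive work is already done in \cref{lem:auslander-ellis}. The one point needing care is that the hypothesis must be invoked at the new thick set $H_1$ rather than the original $H$. This is exactly why \eqref{eqn_fam_containment_for_ur_sets} is assumed for all thick $H$, and why the conclusion is a union over thick sets rather than a statement for a fixed $H$.
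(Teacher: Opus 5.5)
Your proposal is correct and follows the paper's proof essentially step for step. You unpack membership via \cref{lemma_desc_of_very_strong}, replace $A$ by a UR set $B$ agreeing with it on a thick $H_1 \subseteq H$ via \cref{lem:auslander-ellis}, invoke the hypothesis at $H_1$ (the paper's $T$), and transfer back using $A \cap G = B \cap G$ for all thick $G \subseteq H_1$.
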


\begin{proof}
    Let $A \in \cup_{H \in \thick} \big( (\thick_{\cap H})^* \classcapdual \family \big)$.  By \cref{lemma_desc_of_very_strong}, there exists a thick set $H \subseteq S$ such that for all thick sets $H' \subseteq H$, the set $A \cap H' \in \family$.

    By \cref{lem:auslander-ellis}, there exists a UR set $B \subseteq S$ and a thick set $T \subseteq H$ such that $A \cap T = B \cap T$. Since $T \subseteq H$, we have that for all thick sets $H' \subseteq T$, $B \cap H' = A \cap H' \in \family$.  Therefore,
    \[B \in \big((\thick_{\cap T})^* \classcapdual \family\big) \cap \big\{C \subseteq S : \text{$C$ is a UR set} \big\}.\]
    By the assumption in \eqref{eqn_fam_containment_for_ur_sets}, we get that $B \in (\thick_{\cap T})^* \classcapdual \familytwo$, that is, by \cref{lemma_desc_of_very_strong}, for all thick sets $H' \subseteq T$, the set $B \cap H' \in \familytwo$.

    To see that $A \in \cup_{H \in \thick} \big( (\thick_{\cap H})^* \classcapdual \familytwo \big)$, we need only note that for all thick subsets $H' \subseteq T$, we have that $A \cap H' \in \familytwo$.  But this holds because $A \cap H' = B \cap H' \in \familytwo$, as desired.
\end{proof}

\subsection{Difference sets and \bohrzero{} sets}
\label{sec_diff_and_bz_sets}

The following theorems provide a connection between the regionally proximal relation, \bohrzero{} sets, and $\Delta$ sets.
More precisely, the first locates $\Delta$-set structure in sets of the form $R(x,V) \cap H$, where $V$ is an open set containing a point that is regionally proximal to $x$ and $H$ is thick, while the second -- utilizing \cref{mainthm_RP_is_eq_relation}, that the regionally proximal relation is the equicontinuous structure relation -- makes the connection to \bohrzero{} sets explicit.
The best precedent for these types of results can be found in \cite[Theorem E]{huang_shao_ye_2016}, which draws a connection between the regionally proximal relation (and its higher order relatives) and the combinatorics of visit time sets in minimal $\Z$-systems.

\begin{theorem}
\label{thm_delta_from_kronecker}
\lean{commVisitTimeSetForRPPairIsDelta}
\leanok{}
    Let $S$ be a commutative semigroup, $X$ be a minimal $S$-system, $(x, y) \in \RP$, and $V \subseteq X$ be an open set containing $y$.  For all thick $H \subseteq S$, the set $R(x,V) \cap H \in \Delta$.
\end{theorem}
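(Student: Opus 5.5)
We will build, inductively, a sequence $s_1, s_2, \ldots \in S$ such that $s_j \in s_i\big(R(x,V) \cap H\big)$ for all $i < j$. Since $S$ is commutative, the requirement $s_j = s_i a$ with $a \in R(x,V)\cap H$ means exactly that $s_j$ lies in $s_i R(x,V)\cap s_iH$. The inductive hypothesis will carry, along with $s_1,\dots,s_n$, a non-empty open set $W_n \subseteq X$ and elements $a_{i,n}$ for $i \le n$ such that for every $w \in W_n$ we have $a_{i,n} w$ essentially tracking $V$. The cleanest device is to use \cref{thm_delta_set_builder_prestep}: for non-empty open $W$, the set $R(x,W)\cap R(V,W)$ is syndetic. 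A time $t \in R(V,W)$ gives a point $v \in V$ with $tv \in W$, i.e. $v \in V\cap t^{-1}W$, a non-empty open set. This is what lets us shrink open sets while remembering returns to $V$.

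Concretely, I would run the following recursion. Set $V_0 = V$. Given a non-empty open $V_{n-1} \subseteq V$ and chosen elements, use \cref{thm_delta_set_builder_prestep} with $W = V_{n-1}$ (which need not contain $y$; only $V$ must, and we always feed $V$ as the first argument) to get that $R(x,V_{n-1}) \cap R(V,V_{n-1})$ is syndetic. Since a syndetic set meets every thick set, and by \cref{lemma_thick_inverse_shift_intersections} the set $\bigcap_{i<n} s_i^{-1}\cdot(\text{stuff})\cap H'$ for suitable finite intersections of shifts of $H$ is thick, we can pick $s_n$ in $R(x,V_{n-1})$ lying in the appropriate thick set. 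The key identity to arrange is: if $s_i x \in V_{i-1}$-type sets nest properly, then writing $s_j = s_i a$ requires actually finding $a$, which is problematic in a semigroup without cancellation. So instead I will choose the sequence so that $s_j = s_i a_{ij}$ by construction: build $t_1, t_2, \ldots$ and set $s_n = t_1 t_2\cdots t_n$. Then $s_j = s_i (t_{i+1}\cdots t_j)$, and we need each block product $t_{i+1}\cdots t_j \in R(x,V)\cap H$.

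So the real task is an IP-type recursion: choose $t_n$ so that all products $t_{i+1}\cdots t_n$ ($i<n$) lie in $R(x,V)\cap H$. Maintain open sets: require $t_{i+1}\cdots t_{n} x \in V$ for all $i<n$, i.e. $t_n \in \bigcap_{i<n} R\big(x, (t_{i+1}\cdots t_{n-1})^{-1} V\big)$ (using commutativity to move the new element to act first on $x$). This is $R(x,W_n)$ with $W_n = V\cap\bigcap_{i<n-1}(t_{i+1}\cdots t_{n-1})^{-1}V$, an open set; we need it non-empty, and that is where $R(V,\cdot)$ enters: inductively choose $t_{n-1} \in R(V, W_{n-1})$ as well, guaranteeing a point $v \in V$ with $t_{n-1}v \in W_{n-1}$, so $W_n \ni v$ is non-empty. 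Thickness is handled by also requiring $t_n \in \bigcap_{i<n}(t_{i+1}\cdots t_{n-1})^{-1}H$, a thick set by \cref{lemma_thick_inverse_shift_intersections}, so the syndetic set $R(x,W_n)\cap R(V,W_n)$ from \cref{thm_delta_set_builder_prestep} meets it. The main obstacle is bookkeeping this induction correctly (ordering of actions, non-emptiness of $W_n$, and that $R(V,W_n)$ rather than $R(V,V)$ is what guarantees the next step), which I expect to go through as sketched.
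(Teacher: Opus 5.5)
Your final recursion, with $s_n=t_1\cdots t_n$, is correct and is essentially the paper's argument: at each stage \cref{thm_delta_set_builder_prestep} makes $R(x,W_n)\cap R(V,W_n)$ syndetic, you intersect it with the thick set of shifted copies of $H$, and choosing $t_n\in R(V,W_n)$ keeps the next open set non-empty (your opening claim that $s_j\in s_i(R(x,V)\cap H)$ is equivalent to $s_j\in s_iR(x,V)\cap s_iH$ is false without cancellation, but nothing later depends on it). The only difference is bookkeeping: the paper takes $s_{N+1}=s_1\cdots s_Nt$ and so needs commutativity and the surjectivity in \cref{lemma_minimal_comm_action_implies_surjective} to get $(s_1\cdots s_N)^{-1}(V\cap t^{-1}U)\neq\emptyset$, whereas your block products give $W_{n+1}=V\cap t_n^{-1}W_n$ directly, so non-emptiness is immediate and commutativity enters only through \cref{thm_delta_set_builder_prestep}.
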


\begin{proof}
    Let $H \subseteq S$ be thick. Consider the following propositions as functions of elements $s_1, \ldots, s_N \in S$:
    \begin{align*}
        A(s_1, \ldots, s_N) &: \text{for all $i, j \in \N$ with $1 \leq i < j \leq N$, $s_j \in s_i (R(x,V) \cap H)$,}\\
        B(s_1, \ldots, s_N) &: \text{$\bigcap_{i=1}^N (s_1 \cdots \hat{s_i} \cdots s_N)^{-1} V \neq \emptyset,$}
    \end{align*}
    where, as convenient shorthand in this proof, we write $s_1 \cdots \hat{s_i} \cdots s_N$ to mean the product of all of the $s_j$'s except $s_i$.
    We will prove the following claim.
    
    \begin{clm}
        For all $s_1, \ldots, s_N \in S$ for which $A(s_1, \ldots, s_N)$ and $B(s_1, \ldots, s_N)$ hold, there exists $s_{N+1} \in S$ such that $A(s_1, \ldots, s_{N+1})$ and $B(s_1, \ldots, s_{N+1})$ hold.
    \end{clm}
    
    This suffices to prove that $R(x,V) \cap H \in \Delta$ and finish the proof of the theorem.
    Indeed, assuming the Claim, we must find $s_1, s_2, \ldots \in S$ satisfying: for all $i, j \in \N$ with $i < j$, $s_j \in s_i (R(x,V) \cap H)$.
    To do that, choose $s_1 \in S$ arbitrarily and note that $A(s_1)$ holds vacuously and $B(s_1)$ holds since $V \neq \emptyset$.
    We then build the sequence $s_1, s_2, \ldots$ inductively by repeatedly appealing to the Claim.

    To prove the Claim, suppose $s_1, \ldots, s_N \in S$ are such that $A(s_1, \ldots, s_N)$ and $B(s_1, \ldots, s_N)$ hold.  Define
    \begin{align*}
        U \defeq \bigcap_{i=1}^N (s_1 \cdots \hat{s_i} \cdots s_N)^{-1} V \qquad \text{ and } \qquad G \defeq \bigcap_{i=1}^N (s_1 \cdots \hat{s_i} \cdots s_N)^{-1} H.
    \end{align*}
    Note that $U \subseteq X$ is non-empty and open by $B(s_1, \ldots, s_N)$ and that $G \subseteq S$ is thick by \cref{lemma_preimage_intersection_of_thick_is_thick}.

    By \cref{thm_delta_set_builder_prestep}, the set $R(x,U) \cap R(V,U)$ is syndetic.  The intersection of a thick set and a syndetic set is non-empty, so there exists
    \[t \in R(x,U) \cap R(V,U) \cap G.\]
    Define $s_{N+1} \defeq s_1 \cdots s_N t$.

    By \cref{lemma_return_time_algebra_one} and \cref{lemma_return_to_intersection},
    \begin{align*}
        t \in R(x,U) \cap G &= R \left( x, \bigcap_{i=1}^N (s_1 \cdots \hat{s_i} \cdots s_N)^{-1} V \right) \cap \bigcap_{i=1}^N (s_1 \cdots \hat{s_i} \cdots s_N)^{-1} H \\
        &= \bigcap_{i=1}^N (s_1 \cdots \hat{s_i} \cdots s_N)^{-1} \big( R(x,V) \cap H).
    \end{align*}
    It follows that for all $1 \leq i \leq N$,
    \[s_1 \cdots \hat{s_i} \cdots s_N t \in R(x,V) \cap H.\]
    Multiplying by $s_i$, we see for all $1 \leq i \leq N$, $s_{N+1} \in s_i (R(x,V) \cap H)$.  Combining this with $A(s_1, \ldots, s_N)$ shows that $A(s_1, \ldots, s_{N+1})$ holds.

    To see that $B(s_1, \ldots, s_{N+1})$ holds, we replace $s_{N+1}$ with $s_1 \cdots s_N t$ in
    \begin{align*}
        \bigcap_{i=1}^{N+1} (s_1 \cdots \hat{s_i} \cdots s_N s_{N+1})^{-1} V &= (s_1 \cdots s_N)^{-1} \left( V \cap \bigcap_{i=1}^{N} (s_1 \cdots \hat s_i \cdots s_N t)^{-1} V \right) \\
        &= (s_1 \cdots s_N)^{-1} \left( V \cap t^{-1} U \right).
    \end{align*}
    This is non-empty since $t \in R(V,U)$ and since, by \cref{lemma_minimal_comm_action_implies_surjective}, the element $s_1 \cdots s_N$ acts surjectively on $X$.
\end{proof}

\begin{theorem}
\label{thm_dcthickbohr_is_delta_for_rxu}
\lean{commURSetsOfBohrRecurrenceAreDelta}
\leanok{}
    Let $S$ be a commutative semigroup, and let $B \subseteq S$ be an UR set.  If $B \in \dcthickbohr$, then for all thick $H \subseteq S$, the set $B \cap H \in \Delta$.
\end{theorem}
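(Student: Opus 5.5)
The plan is to reduce to \cref{thm_delta_from_kronecker} by finding a point of $X$ that is regionally proximal to $x$ and lies in the clopen set defining $B$.

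First, apply \cref{lemma_unif_rec_implies_UR} to write $B = R(x,U)$, where $X$ is a minimal $S$-system, $x \in X$, and $U \subseteq X$ is clopen. If $B$ is empty it cannot belong to $\dcthickbohr$, since $S$ is itself \bohrzero{}; so $U \neq \emptyset$.

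Next, pass to the maximal equicontinuous factor. By \cref{mainthm_RP_is_eq_relation}, the quotient $\pi: X \to Y \defeq X/\RP$ is an equicontinuous factor. Since $X$ is minimal, so is $Y$. Put $z = \pi x$.

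The key step is to show that $\pi U$ contains $z$. Suppose not. Since $U$ is closed, hence compact, the set $\pi U$ is compact, so there is an open $W \ni z$ in $Y$ with $W \cap \pi U = \emptyset$. Then $R(z,W) \cap R(x,U) = \emptyset$, because $sx \in U$ forces $s z = \pi(sx) \in \pi U$. On the other hand, $R(z,W)$ is a \bohrzero{} set by \cref{thm_equiv_char_of_bohr_zero_sets}, since $Y$ is minimal and equicontinuous. This contradicts $B \in \dcthickbohr = \dcsyndeticbohr^*$. Hence there is $y \in U$ with $\pi y = \pi x$, so $(x,y) \in \seq = \RP$.

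Finally, $U$ is open and contains $y$, so \cref{thm_delta_from_kronecker} gives $R(x,U) \cap H \in \Delta$ for every thick $H$, which is the claim. The main obstacle is the middle step; it is exactly where closedness of $U$ (from clopenness) and \cref{mainthm_RP_is_eq_relation} are both essential. The only remaining point to check is that \cref{thm_delta_from_kronecker} is applied to the pair $(x,y)$ in the order it requires; since $\RP$ is symmetric, that ordering causes no trouble.
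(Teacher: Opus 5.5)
Your proof is correct, but it finds the point $y$ by a different route from the paper.

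\textbf{The paper's route.}
\begin{enumerate}
\item It uses that $\dcthickbohr$ is partition regular (\cref{lemma_dcthickbohr_is_pr}, which rests on the filter/partition-regularity duality), together with \cref{lemma_pr_family_concentration}. This produces $y \in U$ such that $R(x,U') \in \dcthickbohr$ for every open $U' \ni y$.
\item It then checks, directly from the definition of $\seq$, that $\pi x = \pi y$ for \emph{every} equicontinuous factor $\pi: X \to K$. It does this by intersecting the \bohrzero{} set $R(\pi x, V)$ with the set of Bohr recurrence $R(x, \pi^{-1} W)$.
\end{enumerate}

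\textbf{Your route.} You work with the single maximal equicontinuous factor $X \to X/\RP$ supplied by \cref{mainthm_RP_is_eq_relation}, and you run the compactness argument on the factor side. If the compact set $\pi U$ missed $\pi x$, a separating neighborhood $W$ would give a \bohrzero{} set $R(\pi x, W)$ disjoint from $B$, contradicting $B \in \dcthickbohr$.

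\textbf{What each approach buys.} Your route bypasses the partition regularity of $\dcthickbohr$ and the concentration lemma entirely. It also gives $(x,y) \in \RP$ directly, as equality in the quotient. The cost is that you rely on the maximal equicontinuous factor existing as an actual system. The paper's argument needs only the definition of $\seq$, quantified over all equicontinuous factors, plus the identity $\seq = \RP$. Its concentration step also fits the paper's Furstenberg-family viewpoint.

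Both proofs use the closedness of $U$ in an essential way: for the concentration lemma in the paper, and for compactness of $\pi U$ in yours. Both use the openness of $U$ only in the final application of \cref{thm_delta_from_kronecker}.
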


\begin{proof}
\leanok{}
    By \cref{lemma_unif_rec_implies_UR}, there exists a minimal $S$-system $X$, a point $x \in X$, and a clopen set $U \subseteq X$ such that $B = R(x,U)$.  Assume $B \in \dcthickbohr$. Since sets of Bohr recurrence are non-empty, $B$ is non-empty, and so $U$ is non-empty.  Since $R(x,U) \in \dcthickbohr$, by \cref{lemma_pr_family_concentration}, \cref{lemma_dcthickbohr_is_pr}, and the fact that $U$ is closed, there exists $y \in U$ such that for all non-empty, open $U' \ni y$, the set $R(x,U') \in \dcthickbohr$.  We claim that $(x,y) \in \seq$. To see why, by the definition of $\seq$, we must show that for any equicontinuous $\pi: X \to K$, we have that $\pi x = \pi y$.
    
    Let $\pi: X \to K$ be any equicontinuous factor of $X$.  We will show that $\pi x = \pi y$ by showing that for all open sets $V$ and $W$ containing $\pi x$ and $\pi y$, respectively, $V \cap W \neq \emptyset$.  Let $V$ and $W$ be open sets containing $\pi x$ and $\pi y$, respectively.  Since $K$ is minimal equicontinuous and $V \ni \pi x$, we have by \cref{thm_equiv_char_of_bohr_zero_sets} that $R(\pi x, V) \in \dcsyndeticbohr$. Since $\pi^{-1} W \ni y$ and $R(\pi x, V) \in \dcsyndeticbohr$, we have by \cref{lemma_dcsyndeticbohr_dcthickbohr_dual} that $R(x,\pi^{-1} W) \cap R(\pi x, V) \neq \emptyset$.  Therefore, there exists $s \in S$ such that $sx \in \pi^{-1} W$ and $s \pi x \in V$.  Thus, $s \pi x = \pi s x \in W$ and $s \pi x \in V$, so $V \cap W \neq \emptyset$, as desired.  This completes the proof that $(x,y) \in \seq$.

    We conclude by showing that for all thick $H \subseteq S$, the set $R(x,U) \cap H \in \Delta$. 
    Let $H \subseteq S$ be thick.
    By \cref{mainthm_RP_is_eq_relation}, $\seq = \RP$, so $(x,y) \in \RP$.  Therefore, by \cref{thm_delta_from_kronecker}, $R(x,U) \cap H \in \Delta$, as was to be shown.
\end{proof}

\subsection{Proofs of Theorems  \ref{mainthm_gen_of_bfw} and \ref{mainthm_gen_of_bfw_reformulated}}
\label{sec_proof_of_thms_ab}

Recall that Theorems  \ref{mainthm_gen_of_bfw} and \ref{mainthm_gen_of_bfw_reformulated} were shown to be equivalent in the introduction, so we will prove just \cref{mainthm_gen_of_bfw_reformulated}.

\begin{proof}[Proof of \cref{mainthm_gen_of_bfw_reformulated}]
    By \cref{lemma_desc_of_very_strong}, we see that \cref{mainthm_gen_of_bfw_reformulated} asserts that,
    \begin{align}
        \label{eqn_thmB_goal}
        \bigcup_{H \in \thick} \big( (\thick_{\cap H})^* \classcapdual \dcthickbohr \big) \subseteq \Delta,
    \end{align}
    where $\thick_{\cap H}$ denotes the family of sets $A \subseteq S$ for which $A \cap H \in \thick$.

    To show this, we will apply \cref{thm_lspwf_upgrade} to the families $\family \defeq \dcthickbohr$ and $\familytwo \defeq \Delta$.  To check the condition in \eqref{eqn_fam_containment_for_ur_sets}, let $H \subseteq S$ be thick.  Note the following.
    \begin{enumerate}
        \item Since $H \in \thick_{\cap H}$, we see that $\thick_{\cap H} \neq \emptyset$, whereby $(\thick_{\cap H})^* \neq \powerset(S)$.  By \cref{thm_familymeet_containment_conditions},
        \[(\thick_{\cap H})^* \classcapdual \dcthickbohr \subseteq \dcthickbohr.\]

        \item \cref{thm_dcthickbohr_is_delta_for_rxu} gives that
        \[\dcthickbohr \cap \big\{C \subseteq S : \text{$C$ is a UR set} \big\} \subseteq \syndetic \classcapdual \Delta.\]

        \item     Since $\syndetic = \thick^* \subseteq (\thick_{\cap H})^*$, by monotonicity of family meet,
        \[\syndetic \classcapdual \Delta \subseteq (\thick_{\cap H})^* \classcapdual \Delta.\]
    \end{enumerate}
    Combining 1., 2., and 3., we see that
    \begin{align*}
        \big((\thick_{\cap H})^* \classcapdual \dcthickbohr\big) \cap \big\{C \subseteq S : \text{$C$ is a UR set} \big\} &\subseteq \dcthickbohr \cap \big\{C \subseteq S : \text{$C$ is a UR set} \big\} \\
        &\subseteq \syndetic \classcapdual \Delta \\
        &\subseteq (\thick_{\cap H})^* \classcapdual \Delta,
    \end{align*}
    verifying the condition in \eqref{eqn_fam_containment_for_ur_sets}.

    By \cref{thm_lspwf_upgrade}, we conclude that 
    \[\bigcup_{H \in \thick} \big( (\thick_{\cap H})^* \classcapdual \dcthickbohr \big) \subseteq \bigcup_{H \in \thick} \big( (\thick_{\cap H})^* \classcapdual \Delta \big).\]
    For all $H \in \thick$, we by have that $(\thick_{\cap H})^* \classcapdual \Delta \subseteq \Delta$ by \cref{thm_familymeet_containment_conditions} just as in 1. above.  The containment in \eqref{eqn_thmB_goal} follows immediately from the previous line, proving \cref{mainthm_gen_of_bfw_reformulated}.
\end{proof}

\subsection{Proof of \cref{mainthm_local_ipstar_in_cstar}}
\label{sec_strong_ip_is_strong_central}

A set $A \subseteq S$ is called an \emph{$\IP$ set} if there exists a sequence $s_1, s_2, \ldots \in S$ such that for all finite $F \subseteq \N$, the product $\Pi_{f \in F} s_f \in A$, where the product is taken in increasing order of index.  We denote by $\IP$ the family of $\IP$ subsets of $S$.

It is well-known that a set is $\IP$ if and only if it is contained in an idempotent ultrafilter on $S$ \cite[Thm. 16.4]{hindman_strauss_book_2012}.  Consequently, we see immediately from the ultrafilter characterization of central sets that central sets are $
\IP$ sets, that is, $\central \subseteq \IP$.  It follows that $\IP^* \subseteq \central^*$, and for most semigroups this inclusion is strict.  Nevertheless, \cref{mainthm_local_ipstar_in_cstar} shows that $\central^*$ sets possess some local $\IP^*$ structure, at least in countable, commutative semigroups.

\begin{proof}[Proof of \cref{mainthm_local_ipstar_in_cstar}]
    By \cref{lemma_desc_of_very_strong}, we see that \cref{mainthm_local_ipstar_in_cstar} asserts that
    \begin{align*}
        \central^* \subseteq 
        \bigcap_{H \in \thick} \big( \thick_{\cap H} \classcap \IP^* \big),
    \end{align*}
    where $\thick_{\cap H}$ denotes the family of sets $A \subseteq S$ for which $A \cap H \in \thick$.
    Taking the dual and using the DeMorgan properties, we see that this is equivalent to
    \begin{align}
        \label{eqn_thmC_goal}
        \bigcup_{H \in \thick} \big( (\thick_{\cap H})^* \classcapdual \IP \big) \subseteq \central.
    \end{align}

To show this, we apply \cref{thm_lspwf_upgrade} to the families $\family \defeq \IP$ and $\familytwo \defeq \central$ exactly as we did in the proof of \cref{mainthm_gen_of_bfw_reformulated}. Following that proof exactly as it is written, we need only verify the condition in \eqref{eqn_fam_containment_for_ur_sets}: for all thick sets $H \subseteq S$,
\begin{align*}
    \big((\thick_{\cap H})^* \classcapdual \IP\big) \cap \big\{C \subseteq S : \text{$C$ is a UR set} \big\} \subseteq (\thick_{\cap H})^* \classcapdual \central.
\end{align*}

Let $H \subseteq S$ be thick, and let $B \in (\thick_{\cap H})^* \classcapdual \IP$ be a UR set. By \cref{lemma_unif_rec_implies_UR}, there exists a minimal $S$-system $X$, a point $x \in X$, and a clopen set $U \subseteq X$ such that $B = R(x,U)$.  We have that for all thick $H' \subseteq H$, the set $R(x,U) \cap H' \in \IP$, and we must show that for all thick $H' \subseteq H$, the set $R(x,U) \cap H' \in \central$.

    Let $H' \subseteq H$ be thick. By \cref{lemma_thick_conclusion}, there exists a thick set $G \subseteq H'$ with the stipulated property.
    By assumption, the set $R(x,U) \cap G$ is $\IP$, so it is contained in an idempotent ultrafilter $p$.
    The point $px \in U$ is proximal to $x$.  By \cref{lemma_uniform_near_diag}, there is a non-empty, open set $\alpha \subseteq X^2$ containing the diagonal and an open set $V \subseteq X$ containing $p x$ such that
    \begin{align}
        \label{eqn_centered_return_expression}
        R\big((x,p x), \alpha\big) \cap R(p x,V) \subseteq R(x,U).
    \end{align}
    Define
    \[L = \{p\} \cup \beta S p \subseteq \beta S,\]
    and note that $L$ is a left ideal.
    Since $p \in \overline{G}$, we see that $p \in L \cap \overline{G}$.
    Also, we have that $L \subseteq \overline{R\big((x,p x), \alpha\big)}$.  Indeed, $p \in \overline{R\big((x,p x), \alpha\big)}$ since $p(x,px) = (px,px) \in \alpha$, and for all $q \in \beta S$, we have that $qp \in \overline{R\big((x,p x), \alpha\big)}$ for the same reason.

    Intersecting both sides of the containment in \eqref{eqn_centered_return_expression} with the set $H'$, we see
    \[R\big((x,p x), \alpha\big) \cap H' \cap R(p x,V) \subseteq R(x,U) \cap H'.\]
    By \cref{lemma_thick_conclusion}, the set $H' \cap R\big((x,px),\alpha\big)$ is thick.
    Since $px \in V$ is $S$-uniformly recurrent, we have that $R(p x,V) \in \dcsyndetic$.
    By \cref{thm_central_is_thick_classcap_dcsyndetic}, we see that $R(x,U) \cap H'$ is central, as was to be shown.
\end{proof}

\subsection{Strong family coincidences}

We end this section by deriving from Theorems \ref{mainthm_gen_of_bfw}, \ref{mainthm_gen_of_bfw_reformulated}, and \ref{mainthm_local_ipstar_in_cstar} some curious coincidences among ``strong--'' families, using the terminology from \cref{sec_ps_and_strongps_families}.  The following elementary lemma helps with these deductions.

\begin{lemma}
    \label{lemma_strong_family_deduction_helper}
    \lean{strongFamilyDeductionHelper}
    Let $S$ be a semigroup.  For all families $\family$,
    \[\syndetic \classcapdual \bigg ( \bigcup_{H \in \thick} \big( (\thick_{\cap H})^* \classcapdual \family \big) \bigg) = \syndetic \classcapdual \family.\]
\end{lemma}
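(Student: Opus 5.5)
We prove the two containments separately. Write $\mathcal{U} \defeq \bigcup_{H \in \thick} \big( (\thick_{\cap H})^* \classcapdual \family \big)$. By \cref{lemma_desc_of_very_strong}, $A \in \mathcal{U}$ exactly when there is a thick $H$ such that $A \cap G \in \family$ for every thick $G \subseteq H$. By definition, $A \in \syndetic \classcapdual \mathcal{F}'$ means that $A \cap K \in \mathcal{F}'$ for every $K \in \syndetic^* = \thick$.

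\emph{Containment $\supseteq$.} For each thick $H$ we have $\syndetic = \thick^* \subseteq (\thick_{\cap H})^*$, because $\thick_{\cap H} \subseteq \thick$ (a set whose intersection with $H$ is thick is itself thick) and the dual is antitone. Monotonicity of family meet (\cref{thm_meet_algebra_properties}) then gives $\syndetic \classcapdual \family \subseteq (\thick_{\cap H})^* \classcapdual \family \subseteq \mathcal{U}$. This alone does not suffice, since we need $\syndetic \classcapdual \family \subseteq \syndetic \classcapdual \mathcal{U}$.

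So argue directly. Let $A \in \syndetic\classcapdual\family$ and let $K$ be thick; we must show $A \cap K \in \mathcal{U}$. Take $H = K$. For every thick $G \subseteq K$, we have $(A \cap K)\cap G = A \cap G \in \family$, since $G$ is thick and $A$ is strongly-$\family$. Hence $A\cap K \in (\thick_{\cap K})^*\classcapdual\family \subseteq \mathcal{U}$.

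\emph{Containment $\subseteq$.} Let $A \in \syndetic \classcapdual \mathcal{U}$ and let $K$ be thick; we must show $A \cap K \in \family$. We know $A \cap K \in \mathcal{U}$, so there is a thick $H$ with $A\cap K\cap G \in \family$ for all thick $G\subseteq H$. The difficulty is that $H$ need not be related to $K$, so we cannot simply take $G = K$. The remedy is to use the freedom in choosing $K$: apply the hypothesis not to $K$ but to a thick subset $K_0 \subseteq K$.

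First we need that $A \cap K_0 \in \family$ implies $A\cap K \in \family$; this holds by upward closure. So it suffices to find a thick $K_0\subseteq K$ with $A\cap K_0\in\family$. Apply the hypothesis to $K$ itself to get $H$ as above. If some thick $G \subseteq H\cap K$ exists, we are done. In general, however, $H\cap K$ need not be thick.

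The key step, and the main obstacle, is to pick the test set so that the witness $H$ is forced to meet it thickly. The trick I would try is to split $K$ into two disjoint thick sets $K_1, K_2$. Upward closure then reduces the problem to showing $A \cap K_i \in \family$ for some $i$. More robustly, use the fact that the witness $H$ for $A \cap K$ can be replaced by $H \cap K \cup (S\setminus K)$-type modifications. Indeed, $A\cap K\cap G$ depends only on $G\cap K$, so the condition for all thick $G\subseteq H$ yields information only when $G \cap K$ is large.

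Concretely, I expect to argue by cases. If $H \cap K$ is thick, take $G = H\cap K$. Otherwise $H \setminus K$ contains a thick set $G$, and then $A\cap K\cap G=\emptyset\in\family$, so $\family=\powerset(S)$ and the claim is trivial. This rests on the dichotomy that if $H$ is thick and $H\cap K$ is not thick, then $H\setminus K$ is thick. That dichotomy is false in general, so at this step I would instead use partition regularity of thickness relative to an idempotent ultrafilter in $\overline{H}$ (via \cref{lemma_thick_closure_contains_strong_ultrafilter}). The idea is to choose $K$'s thick subset $K_0$ inside a left ideal contained in $\overline{K}$ and to rechoose $H$ compatibly, so that some thick $G \subseteq H\cap K_0$ exists.
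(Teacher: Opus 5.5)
Your argument for $\syndetic \classcapdual \family \subseteq \syndetic \classcapdual \mathcal{U}$ is correct and is the same as the paper's: you test $A \cap K$ against $H = K$ and use $(A \cap K) \cap G = A \cap G$ for thick $G \subseteq K$.

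The other containment, $\syndetic \classcapdual \mathcal{U} \subseteq \syndetic \classcapdual \family$, is not proved. The proposal ends with an unexecuted plan (idempotent ultrafilters, left ideals inside $\overline{K}$, ``rechoosing $H$ compatibly'') and never reaches the conclusion. The obstacle you identify is also illusory. Once you have a thick $H$ with $A \cap K \cap G \in \family$ for every thick $G \subseteq H$, simply take $G = H$. This gives $A \cap K \cap H \in \family$. Since $A \cap K \cap H \subseteq A \cap K$, upward closure of $\family$ gives $A \cap K \in \family$. Nothing requires $G \subseteq K$ or $H \cap K$ to be thick, because $A \cap K \cap G \subseteq A \cap K$ for every $G$. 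That is why your ``case 2'' also ended trivially, and why the false dichotomy you worried about is never needed.

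In the paper's algebraic terms, the missing observation is that $\mathcal{U} \subseteq \family$. For each thick $H$, the family $\thick_{\cap H}$ is non-empty, since it contains $H$ and even $S$. Hence $(\thick_{\cap H})^* \neq \powerset(S)$, and \cref{thm_familymeet_containment_conditions} together with commutativity of $\classcapdual$ gives $(\thick_{\cap H})^* \classcapdual \family \subseteq \family$. More directly, if $A \in (\thick_{\cap H})^* \classcapdual \family$, then testing against $S \in \thick_{\cap H}$ gives $A = A \cap S \in \family$. Monotonicity of family meet then yields $\syndetic \classcapdual \mathcal{U} \subseteq \syndetic \classcapdual \family$, with no ultrafilter machinery at all.
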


\begin{proof}
    First we will show that the left hand side is contained in the right.  Let $H \in \thick$.  Since $\thick_{\cap H} \neq \emptyset$, $(\thick_{\cap H})^* \neq \powerset(S)$, so by \cref{thm_familymeet_containment_conditions}, $(\thick_{\cap H})^* \classcapdual \family \subseteq \family.$  Since $H \in \thick$ was arbitrary, the containment follows by the monotonicity of family meet.

    To see the reverse inclusion, suppose $A \in \syndetic \classcapdual \family$, and let $T \subseteq S$ be thick.  We will show that $A \cap T \in (\thick_{\cap T})^* \classcapdual \family$.  Let $H \subseteq T$ be thick.  Since $H$ is thick, the set $A \cap T \cap H = A \cap H \in \family$, as was to be shown.
\end{proof}

\begin{theorem}
\label{mainthm_gen_of_bfw_reformulated_stronger}
\lean{commSyndFamMeetDeltaIsSnydFamMeetSetOfBohrRecurrence}
\lean{commSyndFamMeetDeltaIsSnydFamMeetDeltaZero}
\leanok{}
    Let $S$ be a commutative semigroup.  The families of strongly-$\Delta$, strongly-$\Delta_0$, and strongly-$\dcthickbohr$ sets coincide:
    \[\syndetic \classcapdual \Delta = \syndetic \classcapdual \Delta_0 = \syndetic \classcapdual \dcthickbohr.\]
    Equivalently, the families of piecewise-$\Delta^*$, piecewise-$\Delta_0^*$, and piecewise-\bohrzero{} coincide:
    \[\thick \classcap \Delta^* = \thick \classcap \Delta_0^* = \thick \classcap \dcsyndeticbohr.\]
\end{theorem}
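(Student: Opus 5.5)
The plan is to establish a cycle of containments and then dualize. Because $\Delta \subseteq \Delta_0$ holds by definition and $\Delta_0 \subseteq \dcthickbohr$ is \cref{thm_delta_zero_set_of_bohr_rec}, monotonicity of family meet (\cref{thm_meet_algebra_properties}) immediately gives
\[\syndetic \classcapdual \Delta \subseteq \syndetic \classcapdual \Delta_0 \subseteq \syndetic \classcapdual \dcthickbohr.\]
The remaining task, and the only one with real content, is the reverse containment $\syndetic \classcapdual \dcthickbohr \subseteq \syndetic \classcapdual \Delta$.

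For that containment I would use \cref{mainthm_gen_of_bfw_reformulated}, in the form \eqref{eqn_thmB_goal} from its proof:
\[\bigcup_{H \in \thick} \big( (\thick_{\cap H})^* \classcapdual \dcthickbohr \big) \subseteq \Delta.\]
Apply the monotone operator $\syndetic \classcapdual (\,\cdot\,)$ to both sides. \cref{lemma_strong_family_deduction_helper} with $\family = \dcthickbohr$ identifies the left-hand side as $\syndetic \classcapdual \dcthickbohr$, and the result is exactly $\syndetic \classcapdual \dcthickbohr \subseteq \syndetic \classcapdual \Delta$.

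Concretely, the argument runs as follows. If $A$ meets every thick set in a set of Bohr recurrence, fix a thick $T$. Every thick $H \subseteq T$ satisfies $(A\cap T)\cap H = A \cap H \in \dcthickbohr$, so \cref{mainthm_gen_of_bfw_reformulated} applied to $A \cap T$ with the thick set $T$ shows that $A \cap T$ is a $\Delta$ set. This closes the cycle, so the three families coincide. All the difficulty sits inside \cref{mainthm_gen_of_bfw_reformulated}, which we may assume; beyond that, the only care needed is in matching quantifiers, which \cref{lemma_desc_of_very_strong} handles.

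For the dual formulation, take the family dual of each equality. By the DeMorgan-type law in \cref{thm_de_morgan_v1}, $(\syndetic \classcapdual \familytwo)^* = \syndetic^* \classcap \familytwo^* = \thick \classcap \familytwo^*$. We also have $\Delta^*$ and $\Delta_0^*$ by definition, and $\dcthickbohr^* = (\dcsyndeticbohr^*)^* = \dcsyndeticbohr$ because the dual is an involution (\cref{thm_dual_main_properties}). Since the dual is a bijection on families, the two chains of equalities are equivalent, which gives the "equivalently" clause.
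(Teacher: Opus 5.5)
Your proposal is correct and follows the paper's proof essentially step for step. The forward chain comes from monotonicity of family meet. The reverse containment comes from \cref{lemma_strong_family_deduction_helper} combined with \cref{mainthm_gen_of_bfw_reformulated}. Your explicit dualization via \cref{thm_de_morgan_v1} and the involution property supplies the ``equivalently'' clause, which the paper leaves implicit.
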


\begin{proof}
\leanok{}
    That
    \[\syndetic \classcapdual \Delta \subseteq \syndetic \classcapdual \Delta_0 \subseteq \syndetic \classcapdual \dcthickbohr\]
    follows from $\Delta \subseteq \Delta_0$ (this is immediate from the definitions), $\Delta_0 \subseteq \dcthickbohr$ (\cref{thm_delta_zero_set_of_bohr_rec}), and the monotonicity of $\classcapdual$ (\cref{thm_classcapdual_is_monotonic}). So we have only left to show that
    \[\syndetic \classcapdual \dcthickbohr \subseteq \syndetic \classcapdual \Delta.\]
    By the monotonicity of family meet and \cref{lemma_strong_family_deduction_helper}, it suffices to show that
    \[ \bigcup_{H \in \thick} \big( (\thick_{\cap H})^* \classcapdual \dcthickbohr \big) \subseteq \Delta.\]
    But this is precisely the content of \cref{mainthm_gen_of_bfw_reformulated}.
\end{proof}

\begin{theorem}
\label{thm_strong_piecewise_char_of_central_star}
\lean{strongIPIffStrongCentralInCountCommSemi}
\lean{cStarIsStronglyPiecewiseIPStarAndCStar}
\leanok{}
    Let $S$ be a countable, commutative semigroup. The families of strongly central and strongly-$\IP$ sets coincide:
    \begin{align}
        \label{eqn_strong_central_is_strong_ip}
        \syndetic \classcapdual \central = \syndetic \classcapdual \IP.
    \end{align}
    Equivalently, the families of piecewise-$\central^*$ and piecewise-$\IP^*$ sets coincide:
    \[\thick \classcap \central^* = \thick \classcap \IP^*.\]
    Furthermore, for all $A \subseteq S$, the following are equivalent:
    \begin{enumerate}
        \item $A \in \central^*$;

        \item the set $A$ is strongly piecewise $\IP^*$, that is, $A \in \syndetic \classcapdual (\IP^* \classcap \thick)$;
        
        \item the set $A$ is strongly piecewise $\central^*$, that is, $A \in \syndetic \classcapdual (\central^* \classcap \thick)$.
    \end{enumerate}
    In other words,
    \begin{align}
    \label{eqn_central_star_char}
    \central^* = \syndetic \classcapdual (\IP^* \classcap \thick) = \syndetic \classcapdual (\central^* \classcap \thick).
    \end{align}
\end{theorem}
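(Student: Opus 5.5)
The plan mirrors the proof of \cref{mainthm_gen_of_bfw_reformulated_stronger}. For \eqref{eqn_strong_central_is_strong_ip}, one containment is immediate. Since $\central \subseteq \IP$ (central sets belong to minimal idempotents, hence to idempotents), monotonicity of family meet (\cref{thm_classcapdual_is_monotonic}) gives $\syndetic \classcapdual \central \subseteq \syndetic \classcapdual \IP$. For the reverse, \cref{lemma_strong_family_deduction_helper} and monotonicity reduce the task to
\[\bigcup_{H \in \thick} \big( (\thick_{\cap H})^* \classcapdual \IP \big) \subseteq \central.\]
This is exactly \eqref{eqn_thmC_goal}, established in the proof of \cref{mainthm_local_ipstar_in_cstar}. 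Applying the dual, the involution property, and the DeMorgan-type laws (\cref{thm_de_morgan_v1}) yields $(\syndetic \classcapdual \familytwo)^* = \thick \classcap \familytwo^*$. The piecewise form $\thick \classcap \central^* = \thick \classcap \IP^*$ follows at once.

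For the chain of equivalences \eqref{eqn_central_star_char}, I would prove $\central^* \subseteq \syndetic \classcapdual (\IP^* \classcap \thick) \subseteq \syndetic \classcapdual (\central^* \classcap \thick) \subseteq \central^*$.

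The first inclusion comes from \cref{mainthm_local_ipstar_in_cstar}. That theorem says $\central^* \subseteq \bigcap_{H \in \thick}(\thick_{\cap H} \classcap \IP^*)$, so every $\central^*$ set is very strongly piecewise-$\IP^*$, and very strongly piecewise sets are strongly piecewise. Concretely, given $A \cap H' = B \cap H'$ with $H' \subseteq H$, we get $A \cap H = B \cap (H' \cup (A\cap H))$, and the set $H' \cup (A \cap H)$ is still thick. Alternatively one can cite \cref{lemma_desc_of_very_strong}.

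The second inclusion is monotonicity of $\classcapdual$ and $\classcap$ applied to $\IP^* \subseteq \central^*$.

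For the third, I would use $\central^* \classcap \thick \subseteq \central^* \classcap \central^*$. This needs $\thick \subseteq \central^*$, which holds because central sets are piecewise syndetic, hence meet every thick set; more directly, $\central = \dcsyndetic \classcap \thick \subseteq \syndetic \classcap \thick$, and $\syndetic = \thick^*$. Now $\central^*$ is a filter, since $\central$ is partition regular (\cref{thm_pr_iff_dual_is_filter}), so $\central^* \classcap \central^* = \central^*$. Monotonicity then gives $\syndetic \classcapdual (\central^* \classcap \thick) \subseteq \syndetic \classcapdual \central^*$. By \cref{thm_familymeet_containment_conditions} and commutativity, this is contained in $\central^*$, because $\syndetic \ne \powerset(S)$.

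The main obstacle is already handled: it is the local $\IP$-to-central upgrade \eqref{eqn_thmC_goal}, which needs countability through \cref{lemma_thick_conclusion}. The remaining care points are checking that the proof of \cref{mainthm_local_ipstar_in_cstar} genuinely establishes \eqref{eqn_thmC_goal} before dualizing, and confirming $\thick \subseteq \central^*$.
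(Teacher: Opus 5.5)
Your treatment of \eqref{eqn_strong_central_is_strong_ip} and its dual matches the paper, and the first two inclusions of your chain can be justified. The third inclusion, however, rests on a false claim. It is not true that $\thick \subseteq \central^*$: piecewise syndetic sets need not meet every thick set; only syndetic sets do. Every thick set is central, since $S \in \dcsyndetic$. So $\thick \subseteq \central^*$ would force any two thick sets to intersect. This fails in $\N$: $H = \bigcup_n [n^3, n^3+n]$ and its complement are both thick.

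Worse, the intermediate family you pass through is typically empty. By \cref{thm_familymeet_containment_conditions}, $\syndetic \classcapdual \central^* = \emptyset$ exactly when $\thick \not\subseteq \central^*$. Yet $S \in \syndetic \classcapdual (\central^* \classcap \thick)$. So the containment $\syndetic \classcapdual (\central^* \classcap \thick) \subseteq \syndetic \classcapdual \central^*$ is false, not merely unjustified. Equivalently, $\central^* \classcap \thick \not\subseteq \central^*$.

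The missing ingredient is the structure of central sets. By \cref{thm_central_is_thick_classcap_dcsyndetic}, $\central = \dcsyndetic \classcap \thick$. Dynamically central syndetic sets stay central after intersecting with any thick set, i.e., $\dcsyndetic \subseteq \syndetic \classcapdual \central$ as in \eqref{thm_dcsyndetic_is_syndetic_ccdual_central}. Hence $\central \subseteq \thick \classcap (\central \classcapdual \syndetic)$, and dualizing gives $\syndetic \classcapdual (\central^* \classcap \thick) \subseteq \central^*$. Unwound: let $A$ lie in the left side and let $C = D \cap H$ be central, with $D \in \dcsyndetic$ and $H \in \thick$. Write $A \cap H = E \cap H'$ with $E \in \central^*$ and $H'$ thick. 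Then $A \cap C = E \cap (D \cap H') \neq \emptyset$, because $D \cap H'$ is central. This is the paper's argument. The paper also gets $\syndetic \classcapdual (\IP^* \classcap \thick) = \syndetic \classcapdual (\central^* \classcap \thick)$ by dualizing \eqref{eqn_strong_central_is_strong_ip}, and $\central^* \subseteq \syndetic \classcapdual (\central^* \classcap \thick)$ trivially from $A \cap H = A \cap H$.

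Separately, your explicit formula in the first inclusion is wrong. The set $B \cap (H' \cup (A \cap H))$ misses the points of $(A \cap H) \setminus (H' \cup B)$, so you must enlarge $B$ as well: $A \cap H = (B \cup (A \cap H)) \cap (H' \cup (A \cap H))$. Alternatively, argue directly: $A \in \thick_{\cap H} \classcap \IP^*$ means $A = C \cap B$ with $C \cap H$ thick and $B \in \IP^*$, so $A \cap H = (C \cap H) \cap B$.
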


\begin{proof}
\leanok{}
    Since $\central \subseteq \IP$, by the monotonicity of family meet, $\syndetic \classcapdual \central \subseteq \syndetic \classcapdual \IP$. To see the reverse inclusion, by the monotonicity of family meet and \cref{lemma_strong_family_deduction_helper}, it suffices to show that
    \[ \bigcup_{H \in \thick} \big( (\thick_{\cap H})^* \classcapdual \IP \big) \subseteq \central.\]
    But this is precisely the content of \cref{mainthm_local_ipstar_in_cstar}.

    We move on now to show \eqref{eqn_central_star_char}.  Taking the dual in \eqref{eqn_strong_central_is_strong_ip}, we see that $\thick \classcap \central^* = \thick \classcap \IP^*$, whereby $\syndetic \classcapdual (\central^* \classcap \thick) = \syndetic \classcapdual (\IP^* \classcap \thick)$.  The inclusion
    \[\central^* \subseteq \syndetic \classcapdual (\central^* \classcap \thick)\]
    follows immediately from the definitions, leaving us only to show that
    \[\syndetic \classcapdual (\central^* \classcap \thick) \subseteq \central^*.\]
    Combining \cref{thm_central_is_thick_classcap_dcsyndetic}, \eqref{thm_dcsyndetic_is_syndetic_ccdual_central}, and monotonicity, we see
    \[\central = \thick \classcap \dcsyndetic \subseteq \thick \classcap (\central \classcapdual \syndetic).\]
    By the DeMorgan laws and the fact that the dual is antitone, taking the dual, we see
    \[\syndetic \classcapdual (\central^* \classcap \thick) = \big(\thick \classcap ( \central \classcapdual \syndetic )\big)^* \subseteq \central^*,\]
    as was to be shown.
\end{proof}

\section{Open directions}
\label{sec_open_directions}

In this section, we collect the most salient open directions suggested by this work.

We make repeated reference to $\IP_0$ sets and $\SG_d$ in the sections below, so we define them here.  Let $S$ be a semigroup and $A \subseteq S$.  The set $A$ is an $\IP_0$ set if for all $N \in \N$, there exist $s_1, \ldots, s_N \in S$ such that for all $1 \leq k \leq N$ and all integers $1 \leq n_1 < n_2 < \cdots < n_k \leq N$, the product $\Pi_{i = 1}^k s_{n_i} \in A$, where the product is taken in increasing order of index.
The set $A$ is an \emph{$\SG_d$-set} (cf. \cite{host_kra_2011}), $d \in \N$, if there exists a sequence $s_1, s_2, \ldots \in S$ such that for all $k \in \N$ and all positive integers $n_1 < n_2 < \cdots < n_k$ with $n_j - n_i \leq d$, the product $\Pi_{i = 1}^k s_{n_i} \in A$, where the product is taken in increasing order of index.  
We denote the families of $\IP_0$- and $\SG_d$-sets by $\IP_0$ and $\SG_d$, respectively.

\subsection{Higher-order analogues}
\label{sec_higher_order_analogues}

Our main result -- that $\Delta^*$ subsets of commutative semigroups are very strongly piecewise \bohrzero{} -- can be understood as part of a broader program of exploring the kinds of local dynamical structure enjoyed by combinatorially-defined sets.
With the right inputs, the approach we take in this paper should be flexible enough to similarly treat a number of other combinatorially-defined families.

Let us demonstrate by explaining how to recover the main result of Host and Kra in \cite{host_kra_2011}.
We will need the higher-order regionally proximal relations $\RP^{[d]}$, $\SG_d$-sets, and a class of dynamically central syndetic sets called nil$_d$-Bohr$_0$-sets.
The precise definitions of these objects can be found in \cite{host_kra_2011} and \cite{host_kra_maass_2010}; it suffices to know for the current discussion only that $\RP^{[d]}$ generalizes $\RP$, $\SG_d$-sets generalize $\Delta$-sets, and nil$_d$-Bohr$_0$-sets generalize \bohrzero{} sets.

Host and Kra proved that $\SG_{d}^*$ subsets of $\Z$ are very strongly piecewise nil$_d$-Bohr$_0$.
By \cref{thm_lspwf_upgrade}, it suffices to show, roughly, that along any thick set, a visit time set $R(x,U)$ that is a set of nil$_d$-Bohr$_0$ recurrence along any sub-thick set is $\SG_{d}$ along that sub-thick set.
We can derive that from \cref{thm_dcthickbohr_is_delta_for_rxu}, provided we have the proper analogue of \cref{thm_delta_from_kronecker}: we need to know that a set of the form $R(x,V)$ where $V$ contains a point $y$ for which $(x,y) \in \RP^{[d]}$ is an $\SG_{d}$-set along every thick set.
Huang, Shao, and Ye \cite[Thm 7.3.2]{huang_shao_ye_2016} show this but without the thick set; we strongly suspect that a closer inspection of their proof would yield the necessary result.

The same outline -- together with the same improvement on a result of Huang, Shao, and Ye -- should be able to show that $\IP_0^*$ sets are locally nil$_\infty$-Bohr$_0$.
(Nil$_\infty$-Bohr$_0$ are the class of dynamically central syndetic sets arising from infinite-step nilsystems.)
Indeed, note that \cite[Prop. 7.3.1]{huang_shao_ye_2016} says that $\family_{fSG_{\infty}} = \family_{fip}$, which we denote by $\IP_0^*$.
To derive the correct analogue of \cref{thm_delta_from_kronecker}, one would only need to introduce a thick set into \cite[Thm. 7.5.1]{huang_shao_ye_2016}.\\

Our attention so far in this subsection has been on $\Z$-actions, but the higher-order regionally proximal relations make sense and have been studied more generally.
Introduced by Host, Kra, and Maass \cite{host_kra_maass_2010}, these relations, $\RP^{[d]}$ for $d \geq 1$, were shown by Shao and Ye \cite{shao_ye_2012} to be equivalence relations for minimal actions of abelian groups.
We showed in \cite{blahodatna_detmold_glasscock_le_RP_2026} that $\RP$ is an equivalence relation for minimal actions of commutative semigroups.
If the same is true for the higher-order analogues -- we define $\RP^{[d]}$ for semigroup actions and ask this question explicitly in \cite{blahodatna_detmold_glasscock_le_RP_2026} -- it would be a natural next step to seek analogues of the results discussed above.

There are at least two immediate obstacles when seeking generalizations of those results from $\Z$ to arbitrary commutative semigroups.
The first is a nice description of the factor systems $X / \RP^{[d]}$ for $d \geq 2$.
In the case of $\Z$-actions, it is well-understood now that these factors have the structure of an inverse limit of rotations on $d$-step nilmanifolds.
For actions of more general groups and semigroups, the picture is more complicated.
We have nothing new to say in this direction but point the interested reader to \cite{glasner_gutman_ye_2018,gutman_manners_varju_2020} for more information.

The second obstacle is to divine the correct combinatorial structure that generalizes $\Delta$-sets in a result like \cref{thm_delta_from_kronecker}.
We saw above the connection between $\RP^{[d]}$ and $\SG_d$-sets, and there is no apparent obstruction to defining $\SG_d$-sets for arbitrary semigroups (as we did at the top of this section).
We are led to ask the following question in attempting to understand whether this is the ``correct'' combinatorial structure.

\begin{question}
\label{quest_rp_sg_connection}
    Let $S$ be a commutative semigroup, $X$ be a minimal $S$-system, $d \in \N$, $(x,y) \in \RP^{[d]}$, and $V \subseteq X$ be an open set containing $y$.  Is it true that for all thick $H \subseteq S$, the set $R(x,V) \cap H$ is an $\SG_d$-set?
\end{question}

\cref{thm_delta_from_kronecker} gives a positive answer to \cref{quest_rp_sg_connection} in the case that $d=1$.  Key to \cref{thm_delta_from_kronecker} is not the definition of the $\RP^{[1]}$-relation but the strengthening of it given in  \cref{thm_delta_set_builder_prestep}.  Precedent for a higher-order analogue of \cref{thm_delta_set_builder_prestep} can be found in \cite{qiu_ye_2025}, where a ``higher-order Veech's relation'' is shown for all minimal actions of abelian groups.

\subsection{Strong family coincidences}

In a typical semigroup, the class of $\IP$ sets is far smaller than the class of central sets.  Indeed, central sets are piecewise syndetic, while $\IP$ sets need not be.
It was surprising to us to discover, then, that at least in countable, commutative semigroups, strongly-$\IP$ sets are strongly central (\cref{thm_strong_piecewise_char_of_central_star}).
We suspect this could be true in greater generality, but we were unable to show it.

\begin{question}
    Let $S$ be a semigroup and $A \subseteq S$.  If for all thick sets $H \subseteq S$, the set $A \cap H$ is an $\IP$ set, is it true that for all thick sets $H \subseteq S$, the set $A \cap H$ is central?  That is, is it the case that
    \[\syndetic \classcapdual \IP \subseteq \syndetic \classcapdual \central?\]
    (Note that the reverse containment always holds since $\central \subseteq \IP$.)
\end{question}

That strongly-$\IP$ sets are strongly central is not the only unexpected coincidence involving the strongly-- construct in this paper.
Indeed, we showed in \cref{mainthm_gen_of_bfw_reformulated_stronger} that
\[\syndetic \classcapdual \Delta_0 = \syndetic \classcapdual \Delta.\]
Thus, if a set is $\Delta_0$ along all thick sets, then it is a $\Delta$ set.
This is a purely combinatorial statement; it would be interesting to see a purely combinatorial proof.
It would also be interesting to know whether a similar statement holds for $\IP_0$ sets.

\begin{question}
\label{quest_strongly_ip_vs_ipzero}
    Let $S$ be a semigroup and $A \subseteq S$.  If for all thick sets $H \subseteq S$, the set $A \cap H$ is an $\IP_0$ set, is it true that for all thick sets $H \subseteq S$, the set $A \cap H$ an $\IP$ set?  That is, is it the case that
    \[\syndetic \classcapdual \IP_0 \subseteq \syndetic \classcapdual \IP?\]
    (Note that the reverse containment always holds since $\IP \subseteq \IP_0$.)
\end{question}

It seems pertinent to mention that the relationship between the families $\Delta$ and $\Delta_0$ is, in one important dynamical way, not analogous to the relationship between the families $\IP$ and $\IP_0$.  Indeed, following a remark in \cite{bergelson_leibman_2018}, while both $\Delta^*$- and $\Delta_0^*$-recurrence characterize group rotations, $\IP^*$-recurrence characterizes distal systems while $\IP_0^*$-recurrence characterizes infinite-step pro-nilsystems.
This leads us to believe that there might be a negative answer to \cref{quest_strongly_ip_vs_ipzero}.

\subsection{A confluence of algebraic operations on families}
\label{sec_family_meet_with_algebra}

Let $(S,\cdot)$ be a semigroup.  That the binary operation on $S$ extends to an associative binary operation on $\beta S$ -- defined explicitly in \eqref{eqn_ultra_mult} -- is a well-known fact with many applications. Recently, the third and fourth authors \cite{glasscock_le_2025,glasscock_le_synd_idem_filters_2025} found it useful to extend that operation to arbitrary families.  Thus, if $\family$ and $\familytwo$ are Furstenberg families on $S$, we define
\begin{align}
    \label{eqn_fam_product}
    \family \cdot \familytwo \defeq \big\{ A \subseteq S \mid \big\{ s \in S \mid \{ t \in T \mid st \in A \} \in \familytwo \big\} \in \family \big\}.
\end{align}
It is quick to check that this product yields an associative, monotone operation on the set of families of $S$ making $(\families(S),\cdot)$ a semigroup containing $\beta S$ as a subsemigroup.

We saw a number of curious and useful identities in \cref{sec_algebra_structure_of_ff} between the family dual, meet, and join operations.  Together with the product in \eqref{eqn_fam_product}, we are led naturally to ask: what kinds of sentences involving $\ast$, $\classcapdual$, $\classcap$, and $\cdot$ hold true?  Here are two examples for which we have found applications:
\begin{itemize}
    \item (cf. \cite[Lemma 2.1 (iv)]{glasscock_le_synd_idem_filters_2025}) $(\family \cdot \familytwo)^* = \family^* \cdot \familytwo^*$, and

    \item (cf. \cite[Lemma 2.3 (ix)]{glasscock_le_2025}) $(\family_1 \cdot \family_2) \classcap (\familytwo_1 \cdot \familytwo_2) \subseteq (\family_1 \classcap \familytwo_1) \cdot (\family_2 \classcap \familytwo_2)$.
\end{itemize}
We leave the proofs of these as exercises for the curious reader.
Beyond listing more of these identities, it would be interesting to understand the relationships between these operations more systematically.

\bibliographystyle{abbrv}
\bibliography{bib}

\bigskip
\bigskip
\footnotesize
\noindent
Angelina Blahodatna\\
\textsc{University of Massachusetts Lowell}\par\nopagebreak
\noindent
\href{mailto:angelina_blahodatna@uml.edu}
{\texttt{angelina{\_}blahodatna@uml.edu}}

\bigskip
\noindent
Lauren Detmold\\
\textsc{University of Massachusetts Lowell}\par\nopagebreak
\noindent
\href{mailto:lauren_detmold@uml.edu}
{\texttt{lauren{\_}detmold@uml.edu}}

\bigskip
\noindent
Daniel Glasscock\\
\textsc{University of Massachusetts Lowell}\par\nopagebreak
\noindent
\href{mailto:daniel_glasscock@uml.edu}
{\texttt{daniel{\_}glasscock@uml.edu}}

\bigskip
\noindent
Anh N. Le\\
\textsc{University of Denver}\par\nopagebreak
\noindent
\href{mailto:anh.n.le@du.edu}
{\texttt{anh.n.le@du.edu}}

\end{document}